\newtheorem{theo}{Theorem}
\newtheorem{prop}[theo]{Proposition}
\newtheorem{lemma}[theo]{Lemma}
\DeclareMathAlphabet\mathbfcal{OMS}{cmsy}{b}{n}
\newcommand{\rafaelC}[1]{\textcolor{black}{#1}}
\title{Valid inequalities, preprocessing, and an effective heuristic for the uncapacitated three-level lot-sizing and replenishment problem with a distribution structure}
\author{ 
Jesus O. Cunha {\thanks{Universidade Federal do Ceará, Departamento de Estatística e Matemática Aplicada, Fortaleza, Brazil.  ({\tt jesus.ossian@dema.ufc.br})}}
    \and
    Rafael A. Melo {\thanks{Universidade Federal da Bahia, Departamento de Ci\^{e}ncia da Computa\c{c}\~{a}o, Computational Intelligence and Optimization Research Lab (CInO), Salvador, Brazil.  ({\tt melo@dcc.ufba.br}) }}
}
\begin{document}

\maketitle

\begin{abstract}

We consider the uncapacitated three-level lot-sizing and replenishment problem with a distribution structure. In this problem, a single production plant sends the produced items to replenish warehouses from where they are dispatched to the retailers in order to satisfy their demands over a finite planning horizon. Transfers between warehouses or retailers are not permitted, each retailer has a single predefined warehouse from which it receives its items, and there is no restriction on the amount that can be produced or transported in a given period. The goal of the problem is to determine an integrated production and distribution plan minimizing the total costs, which comprehends fixed production and transportation setup as well as variable inventory holding costs.
We describe new valid inequalities both in the space of a standard mixed integer programming (MIP) formulation and in that of a new alternative extended MIP formulation. We show that using such extended formulation, valid inequalities having similar structures to those in the standard one allow achieving tighter linear relaxation bounds. Furthermore, we propose a preprocessing approach to reduce the size of an extended multi-commodity MIP formulation available in the literature. Such preprocessing relies on the removal of variables based on the problem's cost structure while preserving optimality guarantees. We also propose a multi-start randomized bottom-up dynamic programming-based heuristic. The heuristic employs greedy randomization via changes in certain costs and solves subproblems related to each level using dynamic programming.  
Computational experiments indicate that the use of the valid inequalities in a branch-and-cut approach significantly increase the ability of a MIP solver to solve instances to optimality. Additionally, the valid inequalities for the new alternative extended formulation outperform those for the standard one in terms of number of solved instances, running time and number of enumerated nodes. 
Moreover, the proposed heuristic is able to generate solutions with considerably low optimality gaps within very short computational times even for large instances. Combining the preprocessing approach with the heuristic, one can achieve an increase in the number of solutions solved to optimality within the time limit together with significant reductions on the average times for solving them.
\\


\noindent \textbf{Keywords:} Supply chain management; Multi-level lot-sizing; Mixed integer programming; Preprocessing; Heuristics.

\end{abstract}

\section{Introduction}
\label{sec:introduction}

Supply chain optimization has become a crucial and challenging activity in nowadays competitive industrial and business environments. Very often, integrated supply chain decisions regarding production, storage, and transportation are made in situations in which the production plants, warehouses, and clients requiring the produced items are located in different geographical areas. In this direction, we study the uncapacitated three-level lot-sizing and replenishment problem with a distribution structure (3LSPD-U), which was introduced in \citeA{GruBazCorJan19}. In this three-echelon problem, a single production plant (level 0) must produce items to replenish multiple warehouses (level 1), from where these items are dispatched to multiple retailers (level 2) in order to attend their deterministic dynamic demands over a finite planning horizon. The goal consists of determining an integrated production and distribution plan minimizing the total costs, which comprise fixed production and transportation setups as well as variable inventory holding costs.

Multi-level production planning problems, especially those related to two and three-echelon supply chains, have been studied in several works.
\citeA{PocWol91} studied a multi-level lot-sizing problem and described valid inequalities for the problem, including extensions of the well-known $(l,S)$-inequalities for the uncapacitated lot-sizing~\cite{BarWol84}.
\citeA{MelWol10} considered the polynomially solvable uncapacitated two-level lot-sizing and applied the extended formulation resulting from a dynamic programming algorithm to an NP(nondeterministic polynomial-time)-hard production and distribution extension of the problem.
\citeA{AkaMil12} performed a computational analysis of the lower bounds achieved by several approaches for capacitated multi-level lot-sizing problems.
\citeA{ZhaKucYam12} studied a multi-echelon uncapacitated lot-sizing problem with intermediate demands. The authors proposed, for the two-echelon case, a polynomial-time dynamic programming algorithm and a family of valid inequalities together with a polynomial-time separation algorithm. The computational experiments showed that extended formulations can be used to successfully solve an uncapacitated multi-item two-echelon problem and that a {branch-and-cut} algorithm using the proposed inequalities is very effective for dealing with a more general capacitated multi-item multi-echelon problem.

\citeA{SolSur12} considered the one-warehouse multi-retailer problem (OWMR) in which a single warehouse replenishes multiple retailers with deterministic dynamic demands over a discrete planning horizon. The authors proposed several mixed integer programming (MIP) formulations for the problem and compared them both computationally and in terms of the provided linear relaxation bounds. 
\citeA{CunMel16} extended the study of \citeA{SolSur12} for the OWMR. They considered additional formulations and valid inequalities for the problem, which were also compared theoretically and computationally. 
\citeA{Par05} studied the integrated production and distribution planning in a two-level multi-plant, multi-retailer, and multi-item logistic environment. The author proposed a heuristic approach and showed that integrated planning could achieve substantial advantages over decoupled planning.
\citeA{MelWol12} studied a two-level supply chain with multiple items, production sites, and client areas and a discrete-time horizon. The authors developed a hybrid heuristic that uses a strong formulation to provide a good dual bound and suggest certain variables fixing, and a fix-and-optimize approach achieved by fixing variables in a standard formulation to provide the heuristic solution. They showed for different classes of medium-sized instances that the hybrid heuristic provides solutions with a guaranteed quality that are as good as or better than those provided by a commercial MIP solver running for a considerably larger time.
\citeA{HelSah10} proposed a fix-and-optimize heuristic for a capacitated multi-level lot-sizing problem with lead times.

Several authors considered the optimization of three-level supply chains.
\citeA{KadCheShaRam12} considered a multi-product three-level supply chain problem composed of vendors, production plants, and distribution centers and proposed variants of a particle swarm optimization metaheuristic. \citeA{CarTre14} tackled the problem studied in \citeA{KadCheShaRam12} using a MIP formulation and showed that the instances available in the literature could be solved to optimality within short computational times using a commercial MIP solver.
\citeA{ZhaSon18} developed a decision support system based on operations research tools and techniques for production and distribution planning at Danone Waters China Division. The authors proposed a MIP formulation and provided customizable options for company managers. Computational experiments indicated that the proposed approach could significantly increase efficiency and reduce total costs.
\citeA{GruBazCorJan19} studied uncapacitated and capacitated variants of the three-level lot-sizing and replenishment problem with a distribution structure (3LSPD-U and 3LSPD-C). The 3LSPD-U, which is the problem we consider in our work, is already NP-hard as it generalizes the one-warehouse multi-retailer problem. The authors compared several different MIP formulations to solve the problem, including extensions of those for the OWMR~\cite{SolSur12,CunMel16}, and performed extensive computational experiments. These experiments showed that a multi-commodity formulation outperformed all others for uncapacitated instances and echelon stock reformulations achieved the best results for the capacitated ones. 

Besides, extensions of multi-echelon supply chain optimization problems which also take routing decisions into consideration appeared in various works, which corroborates the importance of effective approaches for solving multi-echelon production planning problems.
\citeA{LiChuChen11} studied an infinite horizon inventory routing problem in a three-level distribution system and proposed a decomposition solution approach based on a fixed partition policy in which retailers are divided into disjoint sets and served by separate routes. Efficient algorithms were given for the sub-problems by exploring properties of their optimal solutions and a genetic algorithm was proposed to find near-optimal fixed partitions. \citeA{GuiCoeSchSca19} applied the well-known vendor-managed inventory paradigm extended to a two-echelon supply chain. They proposed a mathematical formulation, a {branch-and-cut} algorithm, and a matheuristic to tackle the problem. 
\citeA{SarBahSupSya19} considered a location inventory routing problem in a three-echelon supply chain system and proposed a two-stage heuristic. The heuristic was applied to a real case study and it was compared to a mixed integer nonlinear programming (MINLP) formulation. 
\citeA{AbdShaNdi19} studied a demand planning problem in a four-level petrochemical supply chain that integrates lot-sizing, scheduling with sequence-dependent transient costs, transportation, and warehousing decisions between suppliers, plants, warehouses, and customers. They proposed a MIP formulation and an iterative three-stage heuristic to provide quality solutions within acceptable computational times.
A literature review regarding production routing problems can be encountered in \citeA{AduCorJan15}.

\subsection{Main contributions and organization}

The main contributions of our work can be summarized as follows. Firstly, we describe new valid inequalities for the 3LSPD-U both in the space of a standard formulation and in the space of a new alternative extended formulation whose size is asymptotically equivalent to the standard one. Additionally, we provide a theoretical comparison regarding the strength of the linear relaxations of these two approaches using valid inequalities. {More specifically, we show that one can obtain stronger bounds when using valid inequalities for the new alternative extended formulation which have structures that are somehow related to those for the standard one.} 
Secondly, we present a new preprocessing technique to reduce the size of a multi-commodity formulation, which was shown in \citeA{GruBazCorJan19} to be the most effective approach available for the 3LSPD-U from a computational viewpoint. The preprocessing approach allows the removal of variables based on the problem's cost structure while preserving optimality guarantees. 
Last but not least, we propose a multi-start randomized bottom-up {dynamic programming-based heuristic} for the problem. The heuristic follows a nonintegrated planning approach {in which subproblems related to each level are solved using dynamic programming} with randomness achieved by performing {controlled} changes in certain costs to allow diversification in the obtained solutions. {Namely, the approach uses a greedy randomization idea, which is widely used for combinatorial optimization problems, but with the difference that such greedy randomization is achieved by randomizing the costs rather than the choices during the execution of the algorithm.}
{
Computational experiments evidence that,
when implemented in a branch-and-cut procedure, the described valid inequalities allow an increase in the ability of a standard MIP solver to tackle the available benchmark instances when compared with the plain formulations. Besides, the performance of the branch-and-cut approach using the new alternative extended formulation outperforms that of the one using the standard formulation.
Additionally, the preprocessing approach allows significant reductions considering the candidate variables to be removed.
Furthermore, the multi-start randomized bottom-up dynamic programming-based heuristic can achieve low optimality gaps within very few seconds on average.
}

The remainder of this paper is organized as follows. 
Section~\ref{sec:problemdefinition} formally defines the uncapacitated three-level lot-sizing and replenishment problem\rafaelC{, presents a known standard mixed integer program, and describes a multi-commodity formulation available in the literature for the problem.}
Section~\ref{sec:validinequalities} characterizes new valid inequalities for the problem, both in the space of the standard formulation as in the space of {a new} alternative extended formulation. Section~\ref{sec:preprocessing} presents a new preprocessing approach to reduce the size of the multi-commodity formulation based on the problem's cost structure. Section~\ref{sec:randomizedheuristic} proposes a multi-start randomized bottom-up {dynamic programming-based heuristic} for the problem. Section~\ref{sec:experiments} summarizes the performed computational experiments. 
Concluding remarks are discussed in Section~\ref{sec:concludingremarks}.

\section{\rafaelC{Problem definition and mixed integer programming formulations}}
\label{sec:problemdefinition}

The uncapacitated three-level lot-sizing and replenishment problem with a distribution structure (3LSPD-U) can be formally defined as follows. There is a set $F = P \cup W \cup R$ of facilities with a singleton $P=\{p\}$ containing the plant (level 0), a set $W$ of warehouses (level 1) and a set $R$ of retailers (level 2). The time horizon is defined as $T = \{1,\ldots,NT\}$. Each warehouse $w\in W$ attends a predefined set of retailers $\delta(w)$ and each retailer $r\in R$ has a predefined unique associated warehouse $\delta_w(r) \in W$. Transportation can only happen between the production plant and the warehouses, and between a warehouse and the retailers it attends. For each facility $i\in F$, a fixed setup cost $sc^i_t$ is incurred whenever production/transportation occurs in period $t\in T$. Furthermore, a per-unit holding cost $hc^i_t$ is charged whenever items are held in inventory in facility $i$ at the end of period $t\in T$.
Each retailer $r\in R$ has its time-varying demands $d^r_t$ for each period $t\in T$. The problem consists of determining a production/transportation plan minimizing the total fixed setup and variable inventory costs.

Additionally, define the demands for facility $i\in P\cup W$ as
\begin{equation*}
    d^i_t = 
    \begin{cases}
     \displaystyle\sum_{r\in R}d^r_t,  & \textrm{if } i=p;\\
     \displaystyle\sum_{r\in \delta(i)}d^r_t, & \textrm{if } i\in W.
    \end{cases}
\end{equation*}
Moreover, for each facility $i\in F$, define $d^i_{kt} = \displaystyle\sum_{l=k}^t d^i_l $ as the cumulative demand from period $k$ up to $t$ for $1\leq k \leq t \leq {|T|}$.

\subsection{Standard mixed integer programming formulation}
\label{sec:stdform}

In order to formulate the 3LSPD-U as a mixed integer program, define variable $x^i_t$ to be the amount produced at the production plant in period $t\in T$ for $i=p$, and $x^i_t$ to be the amount transported to facility $i \in W\cup R$ from its predecessor in period $t\in T$. 
Let variable $s^i_t$ be the amount of stock in facility $i\in F$ at the end of period $t\in T$.
Besides, define the setup binary variable $y^i_t$ to be equal to one whenever $x^i_t > 0$ for $i\in F$ and $t\in T$, and to be equal to zero otherwise.
The problem can be formulated as the following standard mixed integer linear program~\cite{GruBazCorJan19}:
\begin{align}
z_{STD} = & \  \min \ \ \  \sum_{t \in T} \left( \sum_{i\in F} sc^i_t y^i_t + \sum_{i\in F} hc^i_t s^i_t \right)  \label{std-obj} & \\
(STD) \qquad & s^i_{t-1} + x^i_t = \sum_{j \in \delta(i)} x^j_t + s^i_t, \qquad  \textrm{for} \ i \in P\cup W, \ t\in T, \label{std-1} \\
&  s^r_{t-1} + x^r_t = d^r_t + s^r_t, \qquad  \textrm{for} \ r \in R, \ t\in T, \label{std-2} \\
&  x^i_t \leq d^i_{t{|T|}} y^i_t, \qquad  \textrm{for} \ i \in F, \ t\in T, \label{std-3} \\
&  x^i_{t}, \ s^i_t \geq 0, \qquad  \textrm{for} \ i \in F, \ t\in T, \label{std-4}\\
&  y^i_{t} \in  \{0,1\}, \qquad  \textrm{for} \ i \in F, \ t\in T. \label{std-5}
\end{align}
The objective function \eqref{std-obj} minimizes the total setup and inventory costs.
Constraints \eqref{std-1} define inventory balance constraints for the production plant and warehouses. Constraints \eqref{std-2} are inventory balance constraints for the retailers. Constraints \eqref{std-3} are setup enforcing constraints. Constraints \eqref{std-4} and \eqref{std-5} guarantee the nonnegativity and integrality requirements on the variables.
This formulation has $O({|R|} \times {|T|})$ variables and constraints.

Note that 3LSPD-U can be seen as an uncapacitated fixed-charge network flow (UFCNF) problem with a demand node for each pair $\{r\in R, t\in T\}$, with demand $d^r_t$. Such type of observation is well-known and widely used in the production planning literature~\cite{PocWol06}.

\subsection{\rafaelC{Best performing approach in the literature}}
\label{sec:bestliterature}

{In this section, we describe the multi-commodity \rafaelC{formulation for 3LSPD-U} proposed by \citeA{GruBazCorJan19}}.
Multi-commodity formulations~\cite{RarCho79} have been successfully applied for different production planning problems~\cite{AkaMil12,CunMel16Rem,CunMel16}.
Such formulation has shown to be very effective computationally and, furthermore, the best performing approach for solving the uncapacitated instances available in the literature~\cite{GruBazCorJan19}. We remark though that the multi-commodity formulation is not the one available which provides the tightest linear programming relaxation~\cite{GruBazCorJan19}.

Define variable $w^{0r}_{kt}$ to be the amount produced at the production plant in period $k\in T$ to satisfy $d^r_t$ for $r\in R$ and $t\in T$, $k\leq t$, variable $w^{1r}_{kt}$ to be the amount transported from the production plant to the warehouse of retailer $r\in R$ in period $k \in T$ to satisfy $d^r_t$ for $r\in R$ and $t\in T$, $k\leq t$, and variable $w^{2r}_{kt}$ to be the amount transported to retailer $r\in R$ from its corresponding warehouse in period $k\in T$ to satisfy $d^r_t$ for $r\in R$ and $t\in T$, $k\leq t$. Additionally, let variable $\sigma^{0r}_{kt}$ be the amount stocked at the production plant in the end of period $k\in T$ to satisfy $d^r_t$ for $r\in R$ and $t\in T$, $k < t$, variable $\sigma^{1r}_{kt}$ be the amount stocked at the warehouse of retailer $r\in R$ (i.e., $\delta_w(r)$) in the end of period $k\in T$ to satisfy $d^r_t$ for $r\in R$ and $t\in T$, $k < t$, and variable $\sigma^{2r}_{kt}$ be the amount stocked at the retailer $r\in R$ in the end of period $k \in T$ to satisfy $d^r_t$ for $r\in R$ and $t\in T$, $k < t$. Additionally, let $\lambda_{kt}$ be a constant equal to one if $k=t$ and zero otherwise.
The multi-commodity formulation can be defined as
\begin{align}
z_{MC} = & \  \min \ \ \  \sum_{t \in T}\left(  \sum_{i\in F} sc^i_t y^i_t + \sum_{r\in R}\sum_{k \leq t} hc^p_k \sigma^{0r}_{kt} + \sum_{r\in R}\sum_{k \leq t} hc^{\delta_w(r)}_k \sigma^{1r}_{kt} + \sum_{r\in R}\sum_{k \leq t} hc^r_k \sigma^{2r}_{kt} \right)  \label{mc-obj} & \\
(MC) \qquad & \sigma^{0r}_{k-1,t} + w^{0r}_{kt} = w^{1r}_{kt} + \sigma^{0r}_{kt}, \qquad  \textrm{for} \ r \in R, \ k\in T, \ t\in \{k,\ldots,{|T|}\}, \label{mc-1} \\
& \sigma^{1r}_{k-1,t} + w^{1r}_{kt} = w^{2r}_{kt} + \sigma^{1r}_{kt}, \qquad  \textrm{for} \ r \in R, \ k\in T, \ t\in \{k,\ldots,{|T|}\}, \label{mc-1b} \\
&  \sigma^{2r}_{k-1,t} + w^{2r}_{kt} = \lambda_{kt} d^r_t + (1-\lambda_{kt})\sigma^{2r}_{kt}, \qquad  \textrm{for} \ r \in R, \ k\in T, \ t\in \{k,\ldots,{|T|}\}, \label{mc-2} \\
&  w^{0r}_{kt} \leq d^r_{t} y^{p}_k, \qquad  \textrm{for} \ r \in R, \ k\in T, \ t\in \{k,\ldots,{|T|}\}, \label{mc-3} \\
&  w^{1r}_{kt} \leq d^r_{t} y^{\delta_w(r)}_k, \textrm{for} \ r \in R, \ k\in T, \ t\in \{k,\ldots,{|T|}\}, \label{mc-4} \\
&  w^{2r}_{kt} \leq d^r_{t} y^{r}_k, \textrm{for} \ r \in R, \ k\in T, \ t\in \{k,\ldots,{|T|}\}, \label{mc-5} \\
&  w^{0r}_{kt},\ w^{1r}_{kt},\ w^{2r}_{kt}, \ \sigma^{0r}_{kt}, \ \sigma^{1r}_{kt}, \ \sigma^{2r}_{kt} \geq 0, \qquad  \textrm{for} \ r \in R, \ k\in T, \ t\in \{k,\ldots,{|T|}\}, \label{mc-6}\\
&  y^i_{t} \in  \{0,1\}, \qquad  \textrm{for} \ i \in F, \ t\in T. \label{mc-7}
\end{align}
The objective function \eqref{mc-obj} minimizes the total setup and inventory costs.
Constraints \eqref{mc-1},\eqref{mc-1b}, and \eqref{mc-2} are the inventory balance constraints for each commodity at the production plant, warehouses, and retailers, correspondingly. Constraints \eqref{mc-3},\eqref{mc-4}, and \eqref{mc-5} are setup enforcing constraints for the production plant, warehouses, and retailers, respectively. Constraints \eqref{mc-6} and \eqref{mc-7} define the nonnegativity and integrality requirements on the variables.
This formulation has $O({|R|}\times {|T|}^2)$ variables and inequalities.

\section{Valid inequalities}
\label{sec:validinequalities}

In this section, we describe new valid inequalities for the uncapacitated three-level lot-sizing and replenishment problem with a distribution structure (3LSPD-U). 
Subsection~\ref{sec:dicutcollection} describes the well-known dicut collection inequalities, which will be used to show that the inequalities proposed in this section are valid.
Subsection \ref{sec:originalvalidinequalities} describes valid inequalities in the space of the standard formulation.
Subsection \ref{sec:extendedvalidinequalities} presents an alternative extended formulation whose size is asymptotically equivalent to that of the standard formulation and characterizes valid inequalities in this extended space.

\subsection{Dicut collection inequalities}
\label{sec:dicutcollection}

The uncapacitated fixed-charge network flow problem (UFCNF) {is an NP-hard problem~\cite{GarJoh79} that} can be defined as follows. Consider a directed graph $G=(N,A)$ with a set of nodes $N$ and a set of arcs $A=(F,\bar{F})$, where $F$ denotes the set of arcs with fixed-charge and $\bar{F}$ the set of arcs with continuous costs. The goal consists of determining a minimum cost combination of arcs to provide flows from defined supply nodes to a collection of demand nodes. As it was noted earlier in Subsection
~\ref{sec:stdform}, 3LSPD-U can be seen as an uncapacitated fixed-charge network flow problem.

Let $s$ be a source vertex and define $\mathcal{T}$ to be a set of considered sinks.
A $t$-dicut for $t\in \mathcal{T}$ is a set of arcs whose removal from $A$ blocks all flow from the source $s$ to the sink $t$. We remark that although we already use the notation $t$ for periods, its use whenever we talk about $t$-dicut is related to a demand node $t$ in the fixed-charge network. A dicut collection $\Gamma = \{\Gamma^t\}_{t \in \mathcal{T}}$ is a set of $t$-dicuts. A simple dicut collection is one with at most a single dicut for every $t\in \mathcal{T}$, i.e., $\Gamma^t\leq 1$ for every $t\in \mathcal{T}$.

Consider $\gamma^t_{ij}$ to be the number of $t$-dicuts in $\Gamma^t$ containing arc $(i,j)$, $\gamma_{ij} = \max\{\gamma^t_{ij}: t\in \mathcal{T}\}$
and $\gamma^t = |\Gamma^t|$.
\citeA{RarWol93} have shown that every inequality 
\begin{equation}\label{ineq:dicut}
    \sum_{(i,j) \in \bar{F}} \gamma_{ij} x_{ij} + \sum_{(i,j)\in F}\sum_{t \in \mathcal{T}} d_t \gamma^t_{ij} y_{ij} \geq \sum_{t\in \mathcal{T}} \gamma^t d_t
\end{equation}
derived from a dicut collection is valid for the UFCNF. A simple dicut collection inequality is one obtained from a simple dicut collection. {We remark that, to the best of our knowledge, there is no efficient separation procedure for separating dicut collection inequalities in general.}

In the remainder of this section, we will describe valid inequalities for 3LSPD-U and show that they can be obtained as simple dicut collection inequalities.

\subsection{Valid inequalities in the space of the standard formulation}\label{sec:originalvalidinequalities}

Define $X^{STD}$ as the set of integer feasible solutions for the standard formulation, i.e., those satisfying \eqref{std-1}-\eqref{std-5}.
Additionally, define $\delta^b(i)$ as the successors of $i$ at level $b$. In what follows, we characterize single-level, two-level, and three-level valid inequalities for $X^{STD}$.
{
The basic intuition behind the inequalities presented in this section is the following. Given a facility $i$ and a period $l$ with a cumulative demand $d^i_{1l}$ to be satisfied, the inequality provides a proper selection of variables for each period $1\leq k \leq l$ in the corresponding levels which guarantee that such cumulative demand is satisfied.
}

\begin{prop}\label{prop:stdsinglelevel}
Consider a facility $i\in F$. Let $l \in T$, $L^i = \{1,\ldots,l\}$ and $S^i \subseteq L^i$. The single-level inequalities
\begin{equation}\label{ineq:stdsinglelevel}
     \sum_{k \in L^i\setminus S^i}x^i_k + \sum_{k \in S^i} d^i_{kl} y^i_k \geq d^i_{1l}
\end{equation}
are valid for $X^{STD}$.
\end{prop}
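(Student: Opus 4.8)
The plan is to exhibit \eqref{ineq:stdsinglelevel} as a simple dicut collection inequality and then conclude validity from the result of \citeA{RarWol93} recalled in \eqref{ineq:dicut}. Recall the observation, made right after the standard formulation, that 3LSPD-U is an uncapacitated fixed-charge network flow problem: the variable $x^i_k$ is the flow on the period-$k$ inflow arc of facility $i$ (production if $i=p$, transportation otherwise), this arc carries the fixed charge governed by $y^i_k$, the inventories $s^i_k$ ride on continuous arcs, and all demand routed through $i$ in period $t$ equals $d^i_t$ because every retailer reaches the source only through its unique warehouse and then the plant. First I would fix the facility $i$ and work inside this portion of the network.

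Next I would construct the collection. As sinks I take every demand node routed through $i$ in a period $t\in\{1,\ldots,l\}$; for $i\in W$ these are the nodes of the retailers in $\delta(i)$, for $i=p$ all retailer nodes, and for $i\in R$ just the single node of $i$ in each period, with total weight $d^i_t$ in every case. To each such sink in period $t$ I assign the single dicut formed by the inflow arcs of $i$ in the periods $k\le t$, i.e.\ the arcs carrying $x^i_1,\ldots,x^i_t$. Because no backlogging is allowed, any unit serving a period-$t$ demand routed through $i$ must have entered $i$ in some period $k\le t$, so these arcs indeed form a $t$-dicut; assigning exactly one dicut per sink makes the collection simple, whence $\gamma^t=1$ for each sink.

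It then remains to read off the coefficients of \eqref{ineq:dicut}. The period-$k$ inflow arc lies in the dicut of a period-$t$ sink exactly when $k\le t\le l$. For $k\in S^i$, kept as a fixed-charge arc, its $y^i_k$ coefficient is therefore the sum of the sink weights over $t=k,\ldots,l$, namely $\sum_{t=k}^{l} d^i_t = d^i_{kl}$. For $k\in L^i\setminus S^i$ I place the same arc in the continuous set, so its coefficient is the maximum of its dicut incidences over all sinks, which is $1$, leaving the plain term $x^i_k$. Summing the sink weights gives the right-hand side $\sum_{t=1}^{l} d^i_t = d^i_{1l}$. Substituting into \eqref{ineq:dicut} reproduces \eqref{ineq:stdsinglelevel} exactly, so the inequality is valid for $X^{STD}$.

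I expect the bookkeeping for the continuous terms to be the delicate point: summing the per-sink cut inequalities by hand would attach the multiplicity $l-k+1$ to $x^i_k$, whereas \eqref{ineq:dicut} legitimately uses the smaller coefficient $1=\max_t\gamma^t$, and this sharpening is exactly what gives the inequality its strength, so I would rely on the cited theorem rather than re-derive it. As an independent check I would also give the direct argument: summing the balance constraints \eqref{std-1}--\eqref{std-2} over periods $1,\ldots,l$ at $i$ and at all facilities downstream of $i$, and using nonnegativity of the inventories, yields $\sum_{k=1}^{l} x^i_k \ge d^i_{1l}$; decomposing $d^i_{1l}$ into the portions $a_k$ assigned to each inflow period $k$, one has $a_k\le x^i_k$ for every $k$ and, for $k\in S^i$, $a_k\le d^i_{kl} y^i_k$ (indeed $a_k=0$ when $y^i_k=0$ by \eqref{std-3} and $a_k\le d^i_{kl}$ otherwise), which upon summation gives \eqref{ineq:stdsinglelevel} directly.
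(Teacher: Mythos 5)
Your proposal is correct and follows essentially the same route as the paper: you build the same simple dicut collection (for each period-$t$ demand routed through facility $i$, the cut consisting of the inflow arcs $k\le t$, with the arcs indexed by $S^i$ treated as fixed-charge and the rest as continuous) and invoke \eqref{ineq:dicut}. The extra coefficient bookkeeping and the closing direct $(l,S)$-style argument are sound additions but not a different method.
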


\begin{proof}
    Note that \eqref{ineq:stdsinglelevel} are $(l,S)$-inequalities~\cite{BarWol84} for uncapacitated lot-sizing relaxations related to a single facility.
Given $i\in F$, $L^i$ and $S^i$, we show that inequalities \eqref{ineq:stdsinglelevel} can be obtained as simple dicut inequalities. Define a dicut collection with the $t$-dicut $\{x^i_k \ | \ k \in L^i\setminus S^i,\ k\leq t \} \cup \{y^i_k \ | \ k \in S^i,\ k\leq t \}$ for the nodes associated to each pair  $\{i,t\in L_i\}$ with demand $d^i_t$. The result follows using \eqref{ineq:dicut}. 
\end{proof}

\begin{prop}\label{prop:stdtwolevel}
Consider a facility $i\in P\cup W$ and denote its level by $b$. Let $l \in \{2,\ldots,{|T|}\}$,  $L^i = \{1,\ldots,l_i\} \subseteq L$, with $l_i<l$, and $S^i \subseteq L^i$. Besides, consider level $b'>b$ and for each $j \in \delta^{b'}(i)$, let $L^j = \{l_i+1,\ldots,l\}$ and $S^j \subseteq L^j$. The two-level inequalities
\begin{equation}\label{ineq:stdtwolevel}
    \sum_{k \in L^i\setminus S^i}x^i_k + \sum_{k \in S^i} d^i_{kl} y^i_k + \sum_{j \in \delta^{b'}(i)} \left( \sum_{k \in L^{j}\setminus S^{j}} x^{j}_k +  \sum_{k \in S^{j}} d^j_{kl} y^{j}_k \right) \geq d^i_{1l}
\end{equation}
are valid for $X^{STD}$.
\end{prop}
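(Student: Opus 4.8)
The plan is to derive \eqref{ineq:stdtwolevel} as a simple dicut collection inequality through \eqref{ineq:dicut}, extending the construction used for Proposition~\ref{prop:stdsinglelevel} so that the sinks are spread over the successors of $i$ at level $b'$. The first ingredient I would record is the demand-aggregation identity $d^i_t=\sum_{j\in\delta^{b'}(i)}d^j_t$, which is immediate from the definition of the facility demands and holds no matter how many levels lie between $b$ and $b'$; summing over $t\in\{1,\dots,l\}$ yields $d^i_{1l}=\sum_{j\in\delta^{b'}(i)}d^j_{1l}$, which is precisely the right-hand side the dicut collection must reproduce.

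For the collection itself I would take as sinks the demand nodes $(j,t)$ for every $j\in\delta^{b'}(i)$ and $t\in\{1,\dots,l\}$, each with demand $d^j_t$, and attach to each a single $t$-dicut. For $t\le l_i$ I would use only $i$'s arcs, $\{x^i_k\mid k\in L^i\setminus S^i,\ k\le t\}\cup\{y^i_k\mid k\in S^i,\ k\le t\}$, exactly as in the single-level case. For $t>l_i$ I would combine all of $i$'s arcs in $L^i$ with the arcs of $j$ up to period $t$, namely $\{x^i_k\mid k\in L^i\setminus S^i\}\cup\{y^i_k\mid k\in S^i\}\cup\{x^j_k\mid k\in L^j\setminus S^j,\ k\le t\}\cup\{y^j_k\mid k\in S^j,\ k\le t\}$.

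The step I expect to be the crux is checking that each of these arc sets genuinely blocks all flow to its sink. Any unit of flow reaching the demand $d^j_t$ must enter facility $i$ through its unique feeding arc $x^i$ in some period $k_i$ and must enter $j$ through $j$'s unique incoming transport arc $x^j$ in some period $k_j$, with $k_i\le k_j\le t$; this holds even when $b'>b+1$, since the intermediate levels provide no alternative way into $j$. For $t\le l_i$ we have $k_i\le t\le l_i$, so $k_i\in L^i$ and the flow is intercepted by the $i$-arc of period $k_i$ (represented by $x^i_{k_i}$ if $k_i\notin S^i$ and by $y^i_{k_i}$ otherwise). For $t>l_i$ I would split on $k_j$: if $k_j\le l_i$ then $k_i\le k_j\le l_i$ and the flow is again intercepted by an $i$-arc, while if $k_j>l_i$ then $k_j\in\{l_i+1,\dots,t\}\subseteq L^j$ and the flow is intercepted by a $j$-arc. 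Thus every arc set is a $t$-dicut and the whole collection is a simple dicut collection.

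It then remains to substitute the collection into \eqref{ineq:dicut} and match coefficients, which is routine. Each continuous arc $x^i_k$ or $x^j_k$ belongs to at least one dicut (for instance the one indexed by $t=l$), hence appears with coefficient one; for $y^i_k$ with $k\in S^i$ the arc lies in the dicut of $(j,t)$ exactly when $t\ge k$, giving the coefficient $\sum_{j\in\delta^{b'}(i)}\sum_{t=k}^{l}d^j_t=d^i_{kl}$, and similarly $y^j_k$ with $k\in S^j$ receives $\sum_{t=k}^{l}d^j_t=d^j_{kl}$; the right-hand side evaluates to $\sum_{j\in\delta^{b'}(i)}d^j_{1l}=d^i_{1l}$. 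Validity of \eqref{ineq:dicut} for the UFCNF then gives \eqref{ineq:stdtwolevel}.
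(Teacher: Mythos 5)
Your proof is correct and follows essentially the same route as the paper's: both derive \eqref{ineq:stdtwolevel} as a simple dicut collection inequality via \eqref{ineq:dicut}, using dicuts built from $i$'s arcs alone for the early periods and from all of $i$'s arcs in $L^i$ combined with $j$'s arcs up to period $t$ for the late periods. The only cosmetic difference is that you attach every sink to the level-$b'$ successors over $t\in\{1,\dots,l\}$ and recover the right-hand side through $d^i_{1l}=\sum_{j\in\delta^{b'}(i)}d^j_{1l}$, whereas the paper places the sinks for $t\le l_i$ at facility $i$ itself; your explicit verification of the dicut property and of the coefficient matching is more detailed than the paper's, which states the collection and invokes \eqref{ineq:dicut} directly.
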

\begin{proof}
Note that inequalities ~\eqref{ineq:stdtwolevel} can be seen as generalized $(l,S)$-like inequalities for two-level relaxations of the problem.
 Given a facility $i\in P\cup W$, $L$, and $L^i = \{1,\ldots,l_i\}$, define a dicut collection with the $t$-dicut $\{x^i_k \ | \ k \in L^i\setminus S^i,\ k\leq t \} \cup \{y^i_k \ | \ k \in S^i,\ k\leq t \}$ for the nodes associated with each pair $\{i,t\in L_i\}$ with demand $d^i_t$, and the $t$-dicut $\{x^i_k \ | \ k \in L^i\setminus S^i \} \cup \{y^i_k \ | \ k \in S^i \} \cup \{x^j_k \ | \ k \in L^j\setminus S^j,\ k \leq t \} \cup \{y^j_k \ | \ k \in S^j,\ k \leq t \} $ for the nodes related to each pair $\{j \in \delta^{b'}(i),t\in L_j\}$ with demand $d^j_t$. Using \eqref{ineq:dicut}, the inequalities are valid. 
\end{proof}

\begin{prop}\label{prop:stdthreelevel}
Let $l \in \{3,\ldots,{|T|}\}$, $L^p = \{1,\ldots,l_p\}$, with $l_p\leq l-2$, and $S^p \subseteq L^p$. Besides, consider $L^w=\{l_p+1,\ldots,l_w\}$, with $l_w\leq l-1$, and $S^w \subseteq L^w$. Furthermore, let $L^r=\{l_w+1,\ldots,l\}$ and $S^r \subseteq L^r$. The three-level inequalities
\begin{equation}\label{ineq:stdthreelevel}
    \sum_{k \in L^p\setminus S^p}x^p_k + \sum_{k \in S^p} d^p_{kl} y^p_k + \sum_{w \in W} \left( \sum_{k \in L^{w}\setminus S^{w}} x^{w}_k + \sum_{k \in S^{w}} d^w_{kl} y^{w}_k \right) + \sum_{r \in R} \left( \sum_{k \in L^{r}\setminus S^{r}} x^{r}_k + \sum_{k \in S^{r}} d^r_{kl} y^{r}_k \right) \geq d^p_{1l}
\end{equation}
are valid for $X^{STD}$.
\end{prop}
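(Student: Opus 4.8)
The plan is to follow exactly the template of Propositions~\ref{prop:stdsinglelevel} and~\ref{prop:stdtwolevel}: exhibit an explicit simple dicut collection $\Gamma$ for the fixed-charge network associated with 3LSPD-U and then invoke inequality~\eqref{ineq:dicut}. The natural choice of sinks is $\mathcal{T} = \{(r,t) : r\in R,\ t\in\{1,\ldots,l\}\}$, i.e., the genuine demand nodes of the network, each carrying demand $d^r_t$. With this choice the right-hand side of~\eqref{ineq:dicut} becomes $\sum_{(r,t)\in\mathcal{T}} d^r_t = \sum_{t=1}^{l} d^p_t = d^p_{1l}$ as soon as every sink is assigned exactly one dicut (so that $\gamma^t=1$ for all $t\in\mathcal{T}$), which already reproduces the intended right-hand side.

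For each sink $(r,t)$ I would assign a single dicut whose form depends on which block of the partition $\{1,\ldots,l\} = L^p \cup L^w \cup L^r$ contains $t$, writing $w=\delta_w(r)$ throughout. If $t\in L^p$, I take $\{x^p_k : k\in L^p\setminus S^p,\ k\le t\}\cup\{y^p_k : k\in S^p,\ k\le t\}$, exactly as in the single-level proof. If $t\in L^w$, I augment it with warehouse arcs, using $\{x^p_k : k\in L^p\setminus S^p\}\cup\{y^p_k : k\in S^p\}\cup\{x^{w}_k : k\in L^{w}\setminus S^{w},\ k\le t\}\cup\{y^{w}_k : k\in S^{w},\ k\le t\}$. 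Finally, if $t\in L^r$, I add retailer arcs as well, obtaining $\{x^p_k : k\in L^p\setminus S^p\}\cup\{y^p_k : k\in S^p\}\cup\{x^{w}_k : k\in L^{w}\setminus S^{w}\}\cup\{y^{w}_k : k\in S^{w}\}\cup\{x^{r}_k : k\in L^{r}\setminus S^{r},\ k\le t\}\cup\{y^{r}_k : k\in S^{r},\ k\le t\}$. The nesting mirrors the two-level construction: once $t$ has passed the block of a given level, all of that level's cut arcs enter unrestricted, since every index there is automatically $\le t$.

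The first thing to verify is that each such set really is a $t$-dicut. Any unit of flow reaching $(r,t)$ is produced at the plant in some period $a$, sent to warehouse $w$ in some period $b$, and sent on to $r$ in some period $c$, with $a\le b\le c\le t$. A short case analysis on the positions of $a,b,c$ relative to $l_p$ and $l_w$ shows at least one chosen arc is traversed: if $a\le l_p$ the plant arc is cut; otherwise $b>l_p$ and, if $b\le l_w$, the warehouse arc is cut; otherwise $c>l_w$ and the retailer arc is cut. This is where the consecutive-block structure of $L^p,L^w,L^r$ (ensured by $l_p\le l-2$ and $l_w\le l-1$ together with the ranges being nonempty) is essential, and I expect it to be the only genuinely delicate step of the argument.

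The remaining work is purely bookkeeping with~\eqref{ineq:dicut}. Every flow arc $x^{\cdot}_k$ lies in at least one dicut, so it receives coefficient $\gamma_{ij}=1$; each setup arc $y^i_k$ receives coefficient $\sum d^r_t$ taken over the sinks whose dicut contains it. Invoking the demand aggregation identities $\sum_{r\in\delta(w)} d^r_t = d^w_t$ and $\sum_{w\in W} d^w_t = d^p_t$, these coefficient sums telescope to $d^p_{kl}$, $d^w_{kl}$, and $d^r_{kl}$ for the plant, warehouse, and retailer setups respectively, reproducing~\eqref{ineq:stdthreelevel} term by term. Validity of the inequality then follows directly from~\eqref{ineq:dicut}.
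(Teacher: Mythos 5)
Your proof is correct and takes essentially the same route as the paper's: the same nested simple dicut collection (plant-only dicuts for periods in $L^p$, plant-plus-warehouse dicuts for periods in $L^w$, and full three-level dicuts for periods in $L^r$), followed by an application of \eqref{ineq:dicut}. The only differences are presentational — you take the genuine retailer demand nodes $(r,t)$ as sinks and recover the coefficients $d^w_{kl}$ and $d^p_{kl}$ by aggregating $\sum_{r\in\delta(w)}d^r_t=d^w_t$, whereas the paper assigns the same dicuts directly to aggregated nodes with demands $d^w_t$ and $d^p_t$; you also spell out the verification that each set is indeed a $t$-dicut, which the paper leaves implicit.
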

\begin{proof}
 Given $L^p$ and $S^p$, $L^w$ and $S^w$ for each $w\in W$, $L^r$ and $S^r$ for each $r\in R$, define a dicut collection with the $t$-dicut $\{x^p_k \ | \ k \in L^p\setminus S^p,\ k\leq t \} \cup \{y^p_k \ | \ k \in S^p,\ k\leq t \}$ for all nodes associated with each pair $\{p,t\in L_p\}$ with demand $d^p_t$, the $t$-dicut $\{x^p_k \ | \ k \in L^p\setminus S^p \} \cup \{y^p_k \ | \ k \in S^p \} \cup \{x^w_k \ | \ k \in L^w\setminus S^w,\ k \leq t \} \cup \{y^w_k \ | \ k \in S^w,\ k \leq t \} $ for the nodes related to each pair $\{w \in W, t\in L_w\}$ with demand $d^w_t$, and the $t$-dicut $\{x^p_k \ | \ k \in L^p\setminus S^p \} \cup \{y^p_k \ | \ k \in S^p \} \cup \{x^{\delta_w(r)}_k \ | \ k \in L^{\delta_w(r)}\setminus S^{\delta_w(r)} \} \cup \{y^{\delta_w(r)}_k \ | \ k \in S^{\delta_w(r)} \} \cup \{x^r_k \ | \ k \in L^r\setminus S^r,\ k \leq t \} \cup \{y^r_k \ | \ k \in S^r,\ k \leq t \} $ for the nodes related to each pair $\{r \in R, t\in L_r\}$ with demand $d^r_t$. Using \eqref{ineq:dicut}, the inequalities are valid. 
\end{proof}

\subsection{Valid inequalities for an extended three-level lot-sizing based formulation} \label{sec:extendedvalidinequalities}

We now propose an extended three-level lot-sizing based formulation which decomposes the {production, transportation, and stocks} in the upper levels (levels 0 and 1) into the specific retailers for which they are related.
{We remark that, differently from the multi-commodity formulation, this decomposition does not determine the specific period of the corresponding retailer to be satisfied, but simply the retailer.}
The goal of such formulation is to permit further exploring the structure of the network flow without implying a very large number of variables. We remark that a similar idea was applied to the one-warehouse multi-retailer problem in~\citeA{CunMel16}.

Define variable $x^{0r}_t$ to be the amount produced at the production plant in period $t\in T$ to satisfy some demand of retailer $r\in R$, variable $x^{1r}_t$ to be the amount transported from the production plant to the warehouse of retailer $r\in R$ in period $t\in T$ to satisfy some demand of $r$, and variable $x^{2r}_t$ to be the amount transported to retailer $r\in R$ from its warehouse in period $t\in T$ to satisfy some of its demands. Additionally, let variable $s^{0r}_t$ be the amount stocked in the production plant at the end of period $t\in T$ to satisfy demand of retailer $r\in R$, $s^{1r}_t$ be the amount stocked in the warehouse of retailer $r \in R$ at the end of period $t\in T$ to satisfy its demand, and $s^{2r}_t$ be the amount stocked in retailer $r\in R$ at the end of period $t\in T$. An extended three-level lot-sizing based formulation can be defined as
\begin{align}
z_{3LF} = & \  \min \ \ \  \sum_{t \in T}\left( \sum_{i\in F} sc^i_t y^i_t + \sum_{r\in R} hc^p_t s^{0r}_t + \sum_{r\in R} hc^{\delta_w(r)}_t s^{1r}_t + \sum_{r\in R} hc^r_t s^{2r}_t \right)  \label{3level-obj} & \\
(3LF) \qquad & s^{0r}_{t-1} + x^{0r}_t = x^{1r}_t + s^{0r}_t, \qquad  \textrm{for} \ r \in R, \ t\in T, \label{3level-1} \\
& s^{1r}_{t-1} + x^{1r}_t = x^{2r}_t + s^{1r}_t, \qquad  \textrm{for} \ r \in R, \ t\in T, \label{3level-1b} \\
&  s^{2r}_{t-1} + x^{2r}_t = d^r_t + s^{2r}_t, \qquad  \textrm{for} \ r \in R, \ t\in T, \label{3level-2} \\
&  x^{0r}_t \leq d^r_{t{|T|}} y^{p}_t, \qquad  \textrm{for} \ r \in R, \ t\in T, \label{3level-3} \\
&  x^{1r}_t \leq d^r_{t{|T|}} y^{\delta_w(r)}_t, \qquad  \textrm{for} \ r \in R, \ t\in T, \label{3level-4} \\
&  x^{2r}_t \leq d^r_{t{|T|}} y^{r}_t, \qquad  \textrm{for} \ r \in R, \ t\in T, \label{3level-5} \\
&  x^{0r}_{t},\ x^{1r}_{t},\ x^{2r}_{t}, \ s^{0r}_t, \ s^{1r}_t, \ s^{2r}_t \geq 0, \qquad  \textrm{for} \ r \in R, \ t\in T, \label{3level-6}\\
&  y^i_{t} \in  \{0,1\}, \qquad  \textrm{for} \ i \in F, \ t\in T. \label{3level-7}
\end{align}
The objective function \eqref{3level-obj} minimizes the total setup and inventory costs.
Constraints \eqref{3level-1}, \eqref{3level-1b} and \eqref{3level-2} are inventory balance constraints for, respectively, the production plant, the warehouses and the retailers. Constraints \eqref{3level-3}, \eqref{3level-4} and \eqref{3level-5} are setup enforcing constraints.
Constraints \eqref{3level-6} ensure the nonnegativity of the variables.
This formulation, similarly to the standard formulation, has $O({|R|}\times {|T|})$ variables and inequalities.

Define $X^{3LF}$ as the set of feasible solutions for the extended three-level lot-sizing based formulation, i.e., those satisfying \eqref{3level-1}-\eqref{3level-7}.
Consider $L^b\subseteq \{1,\ldots,l\}$, $1\leq l \leq {|T|}$ , as the set of periods considered for level $b$, and let $S^b\subseteq L^b$.
Additionally, denote as $\delta^b_w(r)$ the predecessor of $r$ at level $b$, i.e., $\delta^0_w(r) = p$, $\delta^1_w(r) = \delta_w(r)$, and $\delta^2_w(r) = r$. In what follows, we describe single-level, two-level and three-level valid inequalities for $X^{3LF}$.

\begin{prop}\label{prop:3levelsinglelevel}
Consider a retailer $r\in R$, a level $b\in \{0,1,2\}$, and $l\in T$.
The single-level inequalities
\begin{equation}\label{ineq:3levelsinglelevel}
    \sum_{k \in L^b\setminus S^b}x^{br}_k + \sum_{k \in S^b} d^r_{kl} y^{\delta^b_w(r)}_k \geq d^r_{1l}
\end{equation}
are valid for $X^{3LF}$.
\end{prop}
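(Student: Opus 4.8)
The plan is to exhibit the single-level inequality \eqref{ineq:3levelsinglelevel} as a simple dicut collection inequality \eqref{ineq:dicut}, exactly as was done for the standard single-level inequalities in Proposition~\ref{prop:stdsinglelevel}. First I would make explicit the uncapacitated fixed-charge network underlying $X^{3LF}$. As noted in Subsection~\ref{sec:stdform}, the problem is a UFCNF; in the extended formulation the network splits by retailer, so for a fixed $r\in R$ it is a three-stage layered chain: a source feeding plant-time nodes through the production arcs $x^{0r}_k$ (carrying setup $y^p_k$), plant-time nodes linked to warehouse-time nodes by the transport arcs $x^{1r}_k$ (setup $y^{\delta_w(r)}_k$), warehouse-time nodes linked to retailer-time nodes by $x^{2r}_k$ (setup $y^r_k$), together with the inventory arcs $s^{0r}_k,s^{1r}_k,s^{2r}_k$ that move flow forward in time within each stage, and demand $d^r_t$ extracted at retailer-time node $t$. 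The balance equations \eqref{3level-1}--\eqref{3level-2} are precisely the conservation constraints of this network, so any valid dicut collection inequality for it is valid for $X^{3LF}$.

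Next, fixing the level $b\in\{0,1,2\}$, the retailer $r$, and $l\in T$, and taking $L^b=\{1,\ldots,l\}$, I would build a simple dicut collection whose sinks are the retailer-time nodes $t\in\{1,\ldots,l\}$, each with demand $d^r_t$. For sink $t$ I would use the single $t$-dicut
\[
\{x^{br}_k \mid k\in L^b\setminus S^b,\ k\le t\}\cup\{y^{\delta^b_w(r)}_k \mid k\in S^b,\ k\le t\},
\]
classifying the level-$b$ arcs indexed by $k\in S^b$ as fixed-charge arcs (entering through $y^{\delta^b_w(r)}_k$) and the remaining level-$b$ arcs as continuous (entering through $x^{br}_k$). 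The collection is simple, so $\gamma^t=1$ for each sink.

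The step that needs the most care — and the main obstacle — is verifying that this arc set is genuinely a $t$-dicut, that is, that it disconnects the source from retailer-time node $t$. The key structural fact is that every directed source-to-$t$ path crosses stage $b$ exactly once, through some arc $x^{br}_k$, and that this crossing index satisfies $k\le t$: inventory arcs carry flow only forward in time, and the sole arcs entering stage $b$'s chain are its transport/production arcs, so flow that ultimately reaches retailer-time node $t$ must have entered stage $b$ at a period no later than $t$. Cutting all stage-$b$ arcs with $k\le t$ (whether accounted for via $x^{br}_k$ or via $y^{\delta^b_w(r)}_k$) therefore blocks every such path; this is also where the choice $L^b=\{1,\ldots,l\}$ is needed, since omitting any period $\le l$ would leave an uncut crossing arc and hence a surviving path.

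Finally I would read off the coefficients from \eqref{ineq:dicut}. A continuous arc $x^{br}_k$ with $k\in L^b\setminus S^b$ lies in the $t$-dicut for every $t\ge k$, so $\gamma_{x^{br}_k}=1$ and it contributes $\sum_{k\in L^b\setminus S^b}x^{br}_k$. A fixed-charge arc $y^{\delta^b_w(r)}_k$ with $k\in S^b$ satisfies $\gamma^t_{ij}=1$ exactly for $k\le t\le l$, so its coefficient is $\sum_{t=k}^{l}d^r_t=d^r_{kl}$, contributing $\sum_{k\in S^b}d^r_{kl}y^{\delta^b_w(r)}_k$. Since $\gamma^t=1$, the right-hand side is $\sum_{t=1}^{l}d^r_t=d^r_{1l}$. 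Assembling these recovers \eqref{ineq:3levelsinglelevel} as a simple dicut collection inequality, establishing its validity for $X^{3LF}$.
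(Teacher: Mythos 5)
Your proof is correct and follows essentially the same route as the paper: it constructs exactly the same simple dicut collection, with the $t$-dicut $\{x^{br}_k \mid k\in L^b\setminus S^b,\ k\le t\}\cup\{y^{\delta^b_w(r)}_k \mid k\in S^b,\ k\le t\}$ for each retailer-time sink $t\le l$, and invokes \eqref{ineq:dicut}. The only difference is that you spell out the verification that these sets really are dicuts and the coefficient bookkeeping (including the observation that $L^b$ must cover all of $\{1,\ldots,l\}$), details the paper leaves implicit.
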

\begin{proof}
Note that \eqref{ineq:3levelsinglelevel} are $(l,S)$-inequalities~\cite{BarWol84} for uncapacitated lot-sizing relaxations related to a single facility and the demands of a specific retailer. This result can be proven using a reasoning similar to the one in Proposition~\ref{prop:stdsinglelevel}, but considering the demand of each retailer individually.
Given $r\in R$, $b$, $L^b$ and $S^b$, we show that inequalities \eqref{ineq:3levelsinglelevel} can be obtained as simple dicut inequalities. Define a dicut collection with the $t$-dicut $\{x^{br}_k \ | \ k \in L^b\setminus S^b,\ k\leq t \} \cup \{y^{\delta^b_w(r)}_k \ | \ k \in S^b,\ k\leq t \}$ for the nodes associated to each pair $\{r,t\in L^b\}$ with demand $d^r_t$. The result follows using \eqref{ineq:dicut}.

\end{proof}

\begin{prop}\label{prop:3leveltwolevel}
Consider a retailer $r\in R$, levels $b,b' \in \{0,1,2\}$, with $b<b'$, and $l\in \{2,\ldots,{|T|}\}$. Let $L^b = \{1,\ldots,l_b\}$, with $l_b < l$, and $S^b \subseteq L^b$. Also, let $L^{b'} = \{l_b+1,\ldots,l\}$ and $S^{b'} \subseteq L^{b'}$. 
The two-level inequalities
\begin{equation}\label{ineq:3leveltwolevel}
    \sum_{k \in L^b\setminus S^b}x^{br}_k + \sum_{k \in S^b} d^r_{kl} y^{\delta^b_w(r)}_k + \sum_{k \in L^{b'}\setminus S^{b'}} x^{b'r}_k + \sum_{k \in S^{b'}} d^r_{kl} y^{\delta^{b'}_w(r)}_k \geq d^r_{1l}
\end{equation}
are valid for $X^{3LF}$.
\end{prop}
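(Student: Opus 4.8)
The plan is to mirror the proof of Proposition~\ref{prop:stdtwolevel}, but specialized to the single-retailer commodity network induced by $X^{3LF}$ and to the demands $d^r_t$ of the individual retailer $r$, obtaining \eqref{ineq:3leveltwolevel} as a simple dicut collection inequality via \eqref{ineq:dicut}. First I would fix the retailer $r$ and view the relevant part of $X^{3LF}$ as a UFCNF on the time-expanded path $p \to \delta_w(r) \to r$: the fixed-charge arcs are the transport arcs carrying $x^{br}_k$ (with setup $y^{\delta^b_w(r)}_k$) at each level $b\in\{0,1,2\}$ and period $k$, the continuous arcs are the stock arcs, and the sinks are the demand nodes $(r,t)$ with demand $d^r_t$ for $t \in \{1,\ldots,l\}$.

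Next I would define the simple dicut collection $\Gamma = \{\Gamma^t\}$ with exactly one $t$-dicut per sink. For $t \in L^b = \{1,\ldots,l_b\}$, take the single-level cut at level $b$, namely $\{x^{br}_k : k \in L^b\setminus S^b,\ k\leq t\} \cup \{y^{\delta^b_w(r)}_k : k \in S^b,\ k\leq t\}$, exactly as in Proposition~\ref{prop:3levelsinglelevel}. For $t \in L^{b'} = \{l_b+1,\ldots,l\}$, take the combined cut $\{x^{br}_k : k \in L^b\setminus S^b\} \cup \{y^{\delta^b_w(r)}_k : k \in S^b\} \cup \{x^{b'r}_k : k \in L^{b'}\setminus S^{b'},\ k\leq t\} \cup \{y^{\delta^{b'}_w(r)}_k : k \in S^{b'},\ k\leq t\}$, which combines the full level-$b$ cut over $L^b$ with the level-$b'$ cut over periods up to $t$.

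The key step, and the main obstacle, is to verify that this combined cut is a genuine $t$-dicut for $t \in L^{b'}$, i.e., that it blocks every source-to-$(r,t)$ path. Since any flow reaching demand $(r,t)$ must be transported at level $b$ in some period $k_b\leq t$ and at level $b'$ in some period $k_{b'}$ with $k_b\leq k_{b'}\leq t$, I would argue by cases on $k_b$: if $k_b\leq l_b$ then the level-$b$ portion, which covers all of $L^b$, already severs the path; if $k_b > l_b$ then $l_b < k_{b'}\leq t$, so $k_{b'}\in\{l_b+1,\ldots,t\}\subseteq L^{b'}$ and the level-$b'$ portion severs it. Because $L^b$ and $L^{b'}$ are consecutive and disjoint, one of these cases always applies, so the path is blocked.

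Finally I would apply \eqref{ineq:dicut} and check the coefficients. Since the collection is simple, $\gamma^t = 1$ for every $t$, so the right-hand side equals $\sum_{t=1}^l d^r_t = d^r_{1l}$. Tracking membership, each flow arc $x^{br}_k$ with $k\in L^b\setminus S^b$ lies in the $t$-dicuts for all $t\in\{k,\ldots,l\}$, giving $\gamma_{ij}=1$; each setup arc $y^{\delta^b_w(r)}_k$ with $k\in S^b$ lies in the same dicuts, giving the fixed-charge coefficient $\sum_{t=k}^l d^r_t = d^r_{kl}$; the level-$b'$ arcs behave analogously over $\{k,\ldots,l\}$. Substituting these coefficients into \eqref{ineq:dicut} reproduces \eqref{ineq:3leveltwolevel} exactly, establishing validity; this bookkeeping is routine once the dicut validity is settled.
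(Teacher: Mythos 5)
Your proposal is correct and follows essentially the same route as the paper: it builds exactly the same simple dicut collection (single-level cuts for $t\in L^b$, combined level-$b$/level-$b'$ cuts for $t\in L^{b'}$) restricted to the commodity of retailer $r$, and then invokes \eqref{ineq:dicut}. The only difference is that you spell out the verification that the combined cut blocks all source-to-$(r,t)$ paths and the coefficient bookkeeping, both of which the paper leaves implicit.
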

\begin{proof}
Note that \eqref{ineq:3leveltwolevel} can be seen as special cases of the two-echelon inequalities~\cite{ZhaKucYam12} for the multi-echelon lot-sizing  with intermediate demands. 
Besides, the result can be proven using the same idea as the one in Proposition~\ref{prop:stdtwolevel}, but taking into consideration the demand of each retailer individually.
Given $r\in R$, $b$, $b'$, $L^b$, and $S^b$, define a dicut collection with the $t$-dicut $\{x^{br}_k \ | \ k \in L^b\setminus S^b,\ k\leq t \} \cup \{y^{\delta^b_w(r)}_k \ | \ k \in S^b,\ k\leq t \}$ for the nodes associated to each pair $\{r,t\in L^b\}$ with demand $d^r_t$, and the $t$-dicut $\{x^{br}_k \ | \ k \in L^{b}\setminus S^{b} \} \cup \{y^{\delta^{b}_w(r)}_k \ | \ k \in S^{b} \} \cup \{x^{b'r}_k \ | \ k \in L^{b'}\setminus S^{b'},\ k \leq t \} \cup \{y^{\delta^{b'}_w(r)}_k \ | \ k \in S^{b'},\ k \leq t \} $ for the nodes associated to each pair $\{r,t\in L^{b'}\}$ with demand $d^r_t$. Using \eqref{ineq:dicut}, the inequalities are valid.
\end{proof}

\begin{prop}\label{prop:3levelthreelevel}
Consider a retailer $r\in R$, and $l\in \{3,\ldots,{|T|}\}$. Let $L^0 = \{1,\ldots,l_0\}$, with $l_0 < l-1$, and $S^0 \subseteq L^0$. Also, let $L^{1} = \{l_0+1,\ldots,l_1\}$, with $l_1<l$, and $S^{1} \subseteq L^{1}$. Furthermore, let $L^{2} = \{l_1+1,\ldots,l\}$ and $S^{2} \subseteq L^{2}$. 
The three-level inequalities
\begin{equation}\label{ineq:3levelthreelevel}
    \sum_{k \in L^0\setminus S^0}x^{0r}_k + \sum_{k \in S^0} d^r_{kl} y^p_k + \sum_{k \in L^{1}\setminus S^{1}} x^{1r}_k + \sum_{k \in S^{1}} d^r_{kl} y^{\delta_w(r)}_k  + \sum_{k \in L^{2}\setminus S^{2}} x^{2r}_k + \sum_{k \in S^{2}} d^r_{kl} y^{r}_k   \geq d^r_{1l}
\end{equation}
are valid for $X^{3LF}$.
\end{prop}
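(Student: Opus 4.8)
The plan is to mirror the dicut-collection argument used in Propositions~\ref{prop:stdthreelevel} and~\ref{prop:3leveltwolevel}, exploiting the fact that in $X^{3LF}$ the flow and stock variables are already decomposed per retailer. Hence, for a fixed $r\in R$, the relevant relaxation is a three-level \emph{serial} fixed-charge network whose flow arcs in the periods of level $b$ are the $x^{br}_k$, whose fixed-charge (setup) arcs are the $y^{\delta^b_w(r)}_k$, and whose demand nodes are the pairs $\{r,t\}$ with demand $d^r_t$ for $t\in\{1,\dots,l\}$. I would construct a simple dicut collection $\Gamma=\{\Gamma^t\}_{t\in\{1,\dots,l\}}$ with exactly one $t$-dicut per sink, obtained by severing, at every level, either the flow arc (when $k\notin S^b$) or the setup arc (when $k\in S^b$).

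Concretely, I would define three families of $t$-dicuts, one for each block of the partition $\{1,\dots,l\}=L^0\cup L^1\cup L^2$. For $t\in L^0$, the $t$-dicut $\{x^{0r}_k : k\in L^0\setminus S^0,\ k\leq t\}\cup\{y^p_k : k\in S^0,\ k\leq t\}$. For $t\in L^1$, the $t$-dicut that additionally severs the whole upstream level $0$, namely $\{x^{0r}_k : k\in L^0\setminus S^0\}\cup\{y^p_k : k\in S^0\}\cup\{x^{1r}_k : k\in L^1\setminus S^1,\ k\leq t\}\cup\{y^{\delta_w(r)}_k : k\in S^1,\ k\leq t\}$. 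Finally, for $t\in L^2$, the $t$-dicut severing both upstream levels in full together with the level-$2$ arcs up to $t$, i.e. $\{x^{0r}_k : k\in L^0\setminus S^0\}\cup\{y^p_k : k\in S^0\}\cup\{x^{1r}_k : k\in L^1\setminus S^1\}\cup\{y^{\delta_w(r)}_k : k\in S^1\}\cup\{x^{2r}_k : k\in L^2\setminus S^2,\ k\leq t\}\cup\{y^r_k : k\in S^2,\ k\leq t\}$.

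I would then check the two points that make \eqref{ineq:dicut} reproduce \eqref{ineq:3levelthreelevel}. First, each sink $t\in\{1,\dots,l\}$ receives a single dicut, so $\gamma^t=1$ and the right-hand side equals $\sum_{t=1}^{l}d^r_t=d^r_{1l}$. Second, every flow variable $x^{br}_k$ that appears does so with $\gamma^t_{ij}\in\{0,1\}$, giving $\gamma_{ij}=1$ and hence the unit coefficients on the $x$-terms. The only genuine computation is the coefficient of each setup variable $y^{\delta^b_w(r)}_k$ with $k\in S^b$, which equals $\sum_{t}d^r_t\gamma^t_{ij}$ over the sinks whose dicut contains that arc; because a setup arc in block $b$ is cut for every sink of level $b$ with $t\geq k$ and for \emph{every} sink in the strictly downstream blocks, this sum telescopes across the three consecutive blocks to $\sum_{t=k}^{l}d^r_t=d^r_{kl}$, matching \eqref{ineq:3levelthreelevel} exactly.

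I expect the main obstacle to be bookkeeping rather than conceptual: verifying that the nested inclusions (cutting the upstream levels \emph{entirely} while cutting the level containing $t$ only up to period $t$) do form genuine $t$-dicuts on the serial per-retailer network, and that the aggregation of demands over the downstream blocks yields precisely $d^r_{kl}$ for each setup variable. Since this replicates the structure of Proposition~\ref{prop:stdthreelevel} restricted to a single retailer's demand stream, no new ideas beyond careful index accounting are needed, and validity follows immediately from \eqref{ineq:dicut}.
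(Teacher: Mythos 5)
Your construction is exactly the dicut collection used in the paper's own proof: one $t$-dicut per sink $t\in\{1,\dots,l\}$, cutting the upstream blocks entirely and the block containing $t$ only up to period $t$, with validity then following from \eqref{ineq:dicut}. The coefficient bookkeeping you describe (unit coefficients on the $x$-terms, the telescoping sum giving $d^r_{kl}$ on each setup variable, and $d^r_{1l}$ on the right-hand side) is correct and consistent with the paper's argument.
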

\begin{proof}
The proof can be achieved using a reasoning similar to the one in Proposition~\ref{prop:stdthreelevel}, but taking into consideration the demand of each retailer individually.
Given $L^0$, $S^0$, $L^1$, $S^1$, $L^2$ and $S^2$, define a dicut collection with the $t$-dicut $\{x^{0r}_k \ | \ k \in L^0\setminus S^0,\ k\leq t \} \cup \{y^p_k \ | \ k \in S^0,\ k\leq t \}$ for the nodes related to each pair $\{r,t\in L^0\}$ with demand $d^r_t$, the $t$-dicut $\{x^{0r}_k \ | \ k \in L^0\setminus S^0 \} \cup \{y^{p}_k \ | \ k \in S^0 \} \cup \{x^{1r}_k \ | \ k \in L^1\setminus S^1,\ k \leq t \} \cup \{y^{\delta_w(r)}_k \ | \ k \in S^1,\ k \leq t \} $ for the nodes associated with each pair $\{r,t\in L_1\}$ with demand $d^r_t$, and the $t$-dicut $\{x^{0r}_k \ | \ k \in L^0\setminus S^0 \} \cup \{y^p_k \ | \ k \in S^0 \} \cup \{x^{1r}_k \ | \ k \in L^1\setminus S^1 \} \cup \{y^{\delta_w(r)}_k \ | \ k \in S^1 \} \cup \{x^{2r}_k \ | \ k \in L^2\setminus S^2,\ k \leq t \} \cup \{y^r_k \ | \ k \in S^2,\ k \leq t \} $ for the nodes corresponding to each pair $\{r,t\in L_2\}$ with demand $d^r_t$. 
The inequalities are thus valid, using \eqref{ineq:dicut}.

\end{proof}


\subsection{Comparing the bounds achieved with the formulations strengthened with valid inequalities}

We now compare the bounds achieved when using the standard ($STD$) and extended three-level lot-sizing based ($3LF$) formulations when enhanced with the proposed valid inequalities. 
Denote by $\underline{z}_{STD+}$ and $\underline{z}_{3LF+}$, respectively, the linear relaxation values of $STD$ with the addition of valid inequalities \eqref{ineq:stdsinglelevel}, \eqref{ineq:stdtwolevel} and \eqref{ineq:stdthreelevel}, denoted as STD+, and of $3LF$ with the addition of inequalities \eqref{ineq:3levelsinglelevel}, \eqref{ineq:3leveltwolevel} and \eqref{ineq:3levelthreelevel}, denoted as $3LF+$.

Define $R^i$ to be the set of all the retailers which are descendants of facility $i \in F$, and let $b(i)$ be the level of facility $i$. 
 The proof consists of showing that for any feasible solution to the linear relaxation of $3LF+$, there is a corresponding solution to the linear relaxation of $STD+$ with the same objective value.


\begin{lemma}\label{lemma:3LFSTD}
Given a feasible solution $(\hat{x}^0,\hat{x}^1,\hat{x}^2,\hat{s}^0,\hat{s}^1,\hat{s}^2,\hat{y})$ for the linear relaxation of $3LF$, there is a corresponding solution $(\bar{x},\bar{s},\hat{y})$ for the linear relaxation of $STD$ with the same objective value.
\end{lemma}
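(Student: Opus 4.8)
The plan is to build the $STD$ solution by aggregating the retailer-decomposed $3LF$ variables back into facility-level variables, while keeping the setup variables $\hat{y}$ untouched. Concretely, for each period $t\in T$ I would set
$$\bar{x}^p_t = \sum_{r\in R}\hat{x}^{0r}_t,\quad \bar{x}^w_t = \sum_{r\in\delta(w)}\hat{x}^{1r}_t,\quad \bar{x}^r_t = \hat{x}^{2r}_t,$$
and analogously for the stocks,
$$\bar{s}^p_t = \sum_{r\in R}\hat{s}^{0r}_t,\quad \bar{s}^w_t = \sum_{r\in\delta(w)}\hat{s}^{1r}_t,\quad \bar{s}^r_t = \hat{s}^{2r}_t.$$
The claim then reduces to showing that $(\bar{x},\bar{s},\hat{y})$ is feasible for the linear relaxation of $STD$ and attains the same objective value, so that each $STD$ constraint arises as a nonnegative combination of $3LF$ constraints.

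First I would verify the flow-balance constraints. Summing the plant balance equations \eqref{3level-1} over all $r\in R$ produces \eqref{std-1} for $i=p$, where the key identity is that $\delta(p)=W$ together with $\sum_{w\in W}\sum_{r\in\delta(w)}\hat{x}^{1r}_t = \sum_{r\in R}\hat{x}^{1r}_t$ (every retailer lies in exactly one warehouse's set). Summing the warehouse balance equations \eqref{3level-1b} over $r\in\delta(w)$ gives \eqref{std-1} for $i=w$, and the retailer equations \eqref{3level-2} give \eqref{std-2} verbatim after the identification $\bar{x}^r_t=\hat{x}^{2r}_t$, $\bar{s}^r_t=\hat{s}^{2r}_t$. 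For the setup-enforcing constraints, I would use the demand identities $d^p_{t{|T|}} = \sum_{r\in R} d^r_{t{|T|}}$ and $d^w_{t{|T|}} = \sum_{r\in\delta(w)} d^r_{t{|T|}}$, which follow from the definition of $d^i_t$: summing \eqref{3level-3}, \eqref{3level-4}, and \eqref{3level-5} over the relevant retailer sets yields \eqref{std-3} for $i=p$, $i=w$, and $i=r$, respectively. Nonnegativity of $\bar{x}$ and $\bar{s}$ is immediate, since each is a sum of nonnegative $3LF$ variables.

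Finally I would check that the objective value is preserved. The setup terms coincide because $\hat{y}$ is unchanged. For the holding costs, the plant term gives $\sum_{r\in R} hc^p_t \hat{s}^{0r}_t = hc^p_t \bar{s}^p_t$ and the retailer term gives $\sum_{r\in R} hc^r_t \hat{s}^{2r}_t = \sum_{r\in R} hc^r_t \bar{s}^r_t$; for the warehouse term I would group the retailers by their warehouse and use $hc^{\delta_w(r)}_t = hc^w_t$ for $r\in\delta(w)$, so that $\sum_{r\in R} hc^{\delta_w(r)}_t \hat{s}^{1r}_t = \sum_{w\in W} hc^w_t \bar{s}^w_t$. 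Summing over $t$ shows the two objectives are equal.

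This argument is essentially bookkeeping, so I do not expect a genuine obstacle; the single point that demands care is the consistency of the aggregation with the cost and demand data, namely that the facility demands $d^p$, $d^w$ and holding costs $hc^p$, $hc^w$ decompose over retailers exactly as the $3LF$ variables do. Once these identities are in place, every $STD$ constraint is a nonnegative combination of $3LF$ constraints and the two objectives match term by term, which gives the desired correspondence with equal objective value.
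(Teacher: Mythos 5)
Your proposal is correct and follows essentially the same route as the paper's proof: aggregate the retailer-decomposed variables via $\bar{x}^i_t=\sum_{r\in R^i}\hat{x}^{b(i),r}_t$ and $\bar{s}^i_t=\sum_{r\in R^i}\hat{s}^{b(i),r}_t$, sum the $3LF$ constraints over the relevant retailer sets to recover each $STD$ constraint, and match the objectives term by term. If anything, your treatment of the plant and warehouse balance equations (identifying the right-hand side as $\sum_{j\in\delta(i)}\bar{x}^j_t$ rather than $d^i_t$) is stated more carefully than in the paper.
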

\begin{proof}

Note that there is a direct mapping between variables $\bar{x},\bar{s}$ of $STD$ and $\hat{x}^b,\hat{s}^b$ for $b\in \{0,1,2\}$ of $3LF$, which is expressed for each $i\in F$ and $k\in T$ as
\begin{equation}\label{eq:mappingx}
\bar{x}^i_k = \sum_{r \in R^i} \hat{x}^{b(i),r}_k
\end{equation}
and 
\begin{equation}\label{eq:mappings}
\bar{s}^i_k = \sum_{r \in R^i} \hat{s}^{b(i),r}_k.
\end{equation}

For each facility $i\in P$ and period $t\in T$, and for each facility $i\in W$ and period $t\in T$, if we sum, respectively, constraints \eqref{3level-1} and \eqref{3level-1b} for every $r \in R^i$, we have that 
$$\sum_{r \in R^i} \hat{s}^{b(i),r}_{t-1} + \sum_{r \in R^i} \hat{x}^{b(i),r}_t = \sum_{r \in R^i} d^r_t + \sum_{r \in R^i} \hat{s}^{b(i),r}_t.$$
Using \eqref{eq:mappingx} and \eqref{eq:mappings}, this guarantees that
$$\bar{s}^{i}_{t-1} + \bar{x}^{i}_t =  d^i_t + \bar{s}^{i}_t, $$
implying that constraints \eqref{std-1} are satisfied by $(\bar{x},\bar{s},\hat{y})$.

For each $r\in R$ and $t\in T$, constraints \eqref{3level-2} ensure that
$$\hat{s}^{2r}_{t-1} + \hat{x}^{2r}_t = d^r_t + \hat{s}^{2r}_t,$$ which, using \eqref{eq:mappingx} and \eqref{eq:mappings}, guarantee that
$$\bar{s}^{r}_{t-1} + \bar{x}^{r}_t = d^r_t + \bar{s}^{r}_t,$$
and thus, constraints \eqref{std-2} are satisfied by $(\bar{x},\bar{s},\hat{y})$.

For each facility $i\in F$ and period $t\in T$, if we sum the corresponding inequalities \eqref{3level-3}, \eqref{3level-4} or \eqref{3level-5} for every $r \in R^i$, we have that 
$$\sum_{r \in R^i} \hat{x}^{b(i),r} \leq  \sum_{r \in R^i} d^r_{t{|T|}} \hat{y}^i_t, $$
which, using \eqref{eq:mappingx} and \eqref{eq:mappings}, guarantee that
$$ \bar{x}^{i}_t \leq d^i_{t{|T|}} \hat{y}^i_t,$$
implying that constraints \eqref{std-3} are satisfied by $(\bar{x},\bar{s},\hat{y})$.

Constraints \eqref{3level-6} and \eqref{3level-7} ensure $(\bar{x},\bar{s},\hat{y})$ satisfy, respectively, constraints \eqref{std-4} and \eqref{std-5}. 
Therefore, the result holds as the objective functions \eqref{std-obj} and \eqref{3level-obj} are equivalent given the mapping represented by equations \eqref{eq:mappingx} and \eqref{eq:mappings}.

\end{proof}


\begin{lemma}\label{lemma:singlelevel}
Given a feasible solution $(\hat{x}^0,\hat{x}^1,\hat{x}^2,\hat{s}^0,\hat{s}^1,\hat{s}^2,\hat{y})$ for the linear relaxation of $3LF$ satisfying inequalities \eqref{ineq:3levelsinglelevel}, then solution $(\bar{x},\bar{s},\hat{y})$ obtained with the mapping  \eqref{eq:mappingx} and  \eqref{eq:mappings} for the linear relaxation of $STD$ satisfies inequalities \eqref{ineq:stdsinglelevel}.
\end{lemma}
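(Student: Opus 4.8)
The plan is to obtain each standard single-level inequality \eqref{ineq:stdsinglelevel} as a sum, over the descendant retailers $R^i$, of the $3LF$ single-level inequalities \eqref{ineq:3levelsinglelevel}, and then to rewrite the aggregated inequality using the mapping \eqref{eq:mappingx}--\eqref{eq:mappings}. First I would fix a facility $i\in F$, a period $l$, and a set $S^i\subseteq L^i=\{1,\ldots,l\}$, which together specify a target inequality \eqref{ineq:stdsinglelevel}. I would then set $b=b(i)$ and, for every retailer $r\in R^i$, instantiate \eqref{ineq:3levelsinglelevel} with this level $b$ and the same index sets $L^b=L^i$ and $S^b=S^i$ (which is legitimate since $L^b\subseteq\{1,\ldots,l\}$ is allowed in Proposition~\ref{prop:3levelsinglelevel}). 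The one genuinely structural observation is that $\delta^{b(i)}_w(r)=i$ for every $r\in R^i$: because the distribution network is a tree rooted at the plant, the unique ancestor of $r$ at level $b(i)$ is exactly $i$, which I would verify in the three cases $i=p$, $i\in W$, and $i\in R$. Consequently the setup variable in \eqref{ineq:3levelsinglelevel} for each such $r$ is precisely $\hat{y}^i_k$, matching the one appearing in \eqref{ineq:stdsinglelevel}.

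Next I would sum the $|R^i|$ instantiated inequalities over $r\in R^i$. On the left-hand side, the flow terms aggregate as $\sum_{k\in L^i\setminus S^i}\sum_{r\in R^i}\hat{x}^{b(i),r}_k=\sum_{k\in L^i\setminus S^i}\bar{x}^i_k$ by \eqref{eq:mappingx}, while the setup terms aggregate as $\sum_{k\in S^i}\bigl(\sum_{r\in R^i}d^r_{kl}\bigr)\hat{y}^i_k$. Here I would invoke the demand-aggregation identity $\sum_{r\in R^i}d^r_{kl}=d^i_{kl}$, obtained by interchanging the summation over $r\in R^i$ with the summation over periods and applying the definition of $d^i_t$. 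The right-hand side similarly becomes $\sum_{r\in R^i}d^r_{1l}=d^i_{1l}$. Combining these, the summed inequality reads exactly $\sum_{k\in L^i\setminus S^i}\bar{x}^i_k+\sum_{k\in S^i}d^i_{kl}\hat{y}^i_k\ge d^i_{1l}$, which is \eqref{ineq:stdsinglelevel}.

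I do not expect any real obstacle: the argument is a straightforward aggregation whose only content is the bookkeeping that $\delta^{b(i)}_w(r)=i$ for all descendants and that the per-retailer cumulative demands sum to the facility's cumulative demand, both of which follow immediately from the tree structure and the definition of $d^i_t$. Since $\hat{y}$ is carried unchanged by the mapping and every standard single-level inequality arises from exactly one such choice of $(i,l,S^i)$, the conclusion follows.
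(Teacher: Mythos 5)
Your proposal is correct and follows essentially the same route as the paper: fix the target inequality \eqref{ineq:stdsinglelevel}, take $S^b=S^i$, sum the corresponding inequalities \eqref{ineq:3levelsinglelevel} over all $r\in R^i$, and apply the mapping \eqref{eq:mappingx}--\eqref{eq:mappings} together with the demand aggregation $\sum_{r\in R^i}d^r_{kl}=d^i_{kl}$. The only difference is that you make explicit the bookkeeping facts $\delta^{b(i)}_w(r)=i$ for $r\in R^i$ and the demand identity, which the paper leaves implicit.
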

\begin{proof}
Consider an inequality \eqref{ineq:stdsinglelevel} for predefined $i\in F$, $l\in T$ and $S^i$
\begin{equation*}
     \sum_{k \in L^i\setminus S^i}x^i_k + \sum_{k \in S^i} d^i_{kl} y^i_k \geq d^i_{1l}.
\end{equation*}
 Let $S^b = S^i$ and sum inequalities \eqref{ineq:3levelsinglelevel} for every $r\in R^i$ to obtain
\begin{equation*}
    \sum_{r\in R^i} \sum_{k \in L^i\setminus S^i} \hat{x}^{b(i),r}_k + \sum_{r \in R^i}\sum_{k \in S^i} d^r_{kl} \hat{y}^{\delta^b_w(r)}_k \geq \sum_{r \in R^i} d^r_{1l}.
\end{equation*}
Using \eqref{eq:mappingx} and  \eqref{eq:mappings}, this is equivalent to
\begin{equation*}
    \sum_{k \in L^i\setminus S^i}\bar{x}^{i}_k + \sum_{k \in S^i} d^i_{kl} \hat{y}^{i}_k \geq d^i_{1l},
\end{equation*}
and thus, $(\bar{x},\bar{s},\hat{y})$ satisfies \eqref{ineq:stdsinglelevel}.
\end{proof}


\begin{lemma}\label{lemma:twolevel}
Given a feasible solution $(\hat{x}^0,\hat{x}^1,\hat{x}^2,\hat{s}^0,\hat{s}^1,\hat{s}^2,\hat{y})$ for the linear relaxation of $3LF$ satisfying inequalities \eqref{ineq:3leveltwolevel}, then solution $(\bar{x},\bar{s},\hat{y})$ obtained with the mapping  \eqref{eq:mappingx} and  \eqref{eq:mappings} for the linear relaxation of $STD$ satisfies inequalities \eqref{ineq:stdtwolevel}.
\end{lemma}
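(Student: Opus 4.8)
The plan is to mirror the structure of the proof of Lemma~\ref{lemma:singlelevel}, now accounting for two levels. I would start from an arbitrary instance of the standard two-level inequality \eqref{ineq:stdtwolevel}, determined by a facility $i \in P \cup W$ at some level $b=b(i)$, a level $b'>b$, a period $l$, the index sets $L^i,S^i$ at level $b$, and the index sets $L^j,S^j$ at level $b'$ for each successor $j \in \delta^{b'}(i)$. The goal is to obtain this inequality as a plain sum of instances of the $3LF$ two-level inequality \eqref{ineq:3leveltwolevel}, one per retailer $r \in R^i$, after applying the mapping \eqref{eq:mappingx}--\eqref{eq:mappings}.

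The central structural fact I would exploit is that the distribution structure induces a disjoint partition $R^i = \bigcup_{j \in \delta^{b'}(i)} R^j$: every retailer descending from $i$ reaches level $b'$ through exactly one successor $j$. Accordingly, for each $r\in R^i$ let $j(r)$ denote its unique ancestor in $\delta^{b'}(i)$, so that $\delta^b_w(r)=i$ and $\delta^{b'}_w(r)=j(r)$. For this retailer I would invoke inequality \eqref{ineq:3leveltwolevel} with the level-$b$ data $L^b=L^i$, $S^b=S^i$ and the level-$b'$ data $L^{b'}=L^{j(r)}$, $S^{b'}=S^{j(r)}$. Since $L^j=\{l_i+1,\ldots,l\}$ is common to all successors, the requirement $L^{b'}=\{l_b+1,\ldots,l\}$ with $l_b=l_i$ is met, and only the set $S^{b'}=S^{j(r)}$ varies with the ancestor.

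Next I would sum these per-retailer inequalities over all $r\in R^i$, grouping the level-$b'$ contributions according to the partition, i.e.\ writing $\sum_{r\in R^i}=\sum_{j\in\delta^{b'}(i)}\sum_{r\in R^j}$ for those terms. Applying the mapping \eqref{eq:mappingx}--\eqref{eq:mappings} collapses $\sum_{r\in R^i}\hat{x}^{br}_k$ into $\bar{x}^i_k$ and $\sum_{r\in R^j}\hat{x}^{b'r}_k$ into $\bar{x}^j_k$; the setup terms factor because $\hat{y}$ is shared, and the demand coefficients aggregate through the identity $\sum_{r\in R^i} d^r_{kl}=d^i_{kl}$ (and likewise $\sum_{r\in R^j} d^r_{kl}=d^j_{kl}$), which holds because $d^i_t=\sum_{r\in R^i} d^r_t$ for every $i\in P\cup W$. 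The right-hand side becomes $\sum_{r\in R^i} d^r_{1l}=d^i_{1l}$, and the resulting inequality is exactly \eqref{ineq:stdtwolevel}.

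The main obstacle, and the only real point of care beyond the single-level argument, is the bookkeeping caused by the upper-level index sets $S^j$ differing across successors $j\in\delta^{b'}(i)$: one cannot sum a single common $3LF$ inequality, but must select, for each retailer, the instance whose level-$b'$ set matches that of its ancestor. The disjointness of the partition $R^i=\bigcup_{j}R^j$ is precisely what guarantees that this selection is consistent and that the grouped sum reassembles the successor-indexed terms of \eqref{ineq:stdtwolevel} without double counting. Everything else reduces to the same summation-plus-mapping mechanism already used in Lemma~\ref{lemma:singlelevel}.
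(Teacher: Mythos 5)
Your proposal is correct and follows essentially the same route as the paper: for each retailer $r\in R^i$ you select the instance of \eqref{ineq:3leveltwolevel} whose level-$b'$ sets are those of the unique ancestor $j(r)\in\delta^{b'}(i)$, sum over $r\in R^i$ using the disjoint partition $R^i=\bigcup_{j\in\delta^{b'}(i)}R^j$, and collapse via the mapping \eqref{eq:mappingx}--\eqref{eq:mappings} together with $\sum_{r\in R^j}d^r_{kl}=d^j_{kl}$. This is exactly the paper's argument, stated with somewhat more explicit bookkeeping.
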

\begin{proof}

Consider an inequality \eqref{ineq:stdtwolevel} for specific $i\in P\cup W$, $l$,  $L^i$, $S^i$, $b'$, as well as, $L^j$ and $S^j$ for each $j \in \delta^{b'}(i)$
\begin{equation*}
    \sum_{k \in L^i\setminus S^i}x^i_k + \sum_{k \in S^i} d^i_{kl} y^i_k + \sum_{j \in \delta^{b'}(i)} \left( \sum_{k \in L^{j}\setminus S^{j}} x^{j}_k +  \sum_{k \in S^{j}} d^j_{kl} y^{j}_k \right) \geq d^i_{1l}.
\end{equation*}
Let $S^{b(i)}=S^i$, and for each $j \in \delta^{b'}(i)$ choose $S^{b'}$ as $S^j$ and sum inequalities \eqref{ineq:3leveltwolevel} with these choices for every $r\in R^i$ to obtain
\begin{equation*}
    \sum_{r \in R^i}\sum_{k \in L^i\setminus S^i}\hat{x}^{br}_k + \sum_{r \in R^i} \sum_{k \in S^i} d^r_{kl} \hat{y}^{\delta^b_w(r)}_k + \sum_{j \in \delta^{b'}(i)} \left( \sum_{r \in R^j} \sum_{k \in L^{b'(r)}\setminus S^{b'(r)}} \hat{x}^{b'r}_k + \sum_{r \in R^j} \sum_{k \in S^{b'(r)}} d^r_{kl} \hat{y}^{\delta^{b'}_w(r)}_k \right) \geq \sum_{r \in R^i} d^r_{1l}.
\end{equation*}
Thus, using \eqref{eq:mappingx} and \eqref{eq:mappings} we have
\begin{equation*}
    \sum_{k \in L^i\setminus S^i}\bar{x}^{i}_k + \sum_{k \in S^i} d^i_{kl} \hat{y}^{i}_k + \sum_{j \in \delta^b(i)} \left( \sum_{k \in L^{j}\setminus S^{j}} \bar{x}^{j}_k +  \sum_{k \in S^{j}} d^j_{kl} \hat{y}^{j}_k \right) \geq d^i_{1l},
\end{equation*}
implying that $(\bar{x},\bar{s},\hat{y})$ satisfies \eqref{ineq:stdtwolevel}.

\end{proof}


\begin{lemma}\label{lemma:threelevel}
Given a feasible solution $(\hat{x}^0,\hat{x}^1,\hat{x}^2,\hat{s}^0,\hat{s}^1,\hat{s}^2,\hat{y})$ for the linear relaxation of $3LF$ satisfying inequalities \eqref{ineq:3levelthreelevel}, then solution $(\bar{x},\bar{s},\hat{y})$ obtained with the mapping  \eqref{eq:mappingx} and  \eqref{eq:mappings} for the linear relaxation of $STD$ satisfies inequalities \eqref{ineq:stdthreelevel}.
\end{lemma}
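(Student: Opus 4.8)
The plan is to follow verbatim the template of Lemmas~\ref{lemma:singlelevel} and~\ref{lemma:twolevel}: express an arbitrary standard three-level inequality~\eqref{ineq:stdthreelevel} as an aggregation of the $3LF$ three-level inequalities~\eqref{ineq:3levelthreelevel} summed over all retailers, and then rewrite the aggregated inequality through the variable mapping~\eqref{eq:mappingx}--\eqref{eq:mappings}. First I would fix an inequality~\eqref{ineq:stdthreelevel}, which is determined by $l$, the period boundaries $l_p$ and $l_w$ (hence the common sets $L^p$, $L^w$, and $L^r$), the set $S^p\subseteq L^p$, a set $S^w\subseteq L^w$ for each warehouse $w\in W$, and a set $S^r\subseteq L^r$ for each retailer $r\in R$. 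For each $r\in R$ I would instantiate~\eqref{ineq:3levelthreelevel} with the matching partition $L^0=L^p$, $L^1=L^{\delta_w(r)}$, $L^2=L^r$ and the sets $S^0=S^p$, $S^1=S^{\delta_w(r)}$, $S^2=S^r$. This is admissible because the boundaries satisfy $l_0=l_p\leq l-2$ and $l_1=l_w\leq l-1$, exactly the requirements of Proposition~\ref{prop:3levelthreelevel}, and because all retailers sharing a warehouse $w$ are thereby assigned the same index sets $L^1=L^w$ and $S^1=S^w$.

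Next I would sum these $|R|$ inequalities and process the six groups of terms. The level-$0$ flow and setup terms collapse immediately since $R^p=R$, giving $\sum_{r\in R}\hat{x}^{0r}_k=\bar{x}^p_k$ from~\eqref{eq:mappingx} and $\sum_{r\in R}d^r_{kl}=d^p_{kl}$ from the definition of the plant demand. The level-$2$ terms are trivial because $R^r=\{r\}$ yields $\hat{x}^{2r}_k=\bar{x}^r_k$. For the level-$1$ terms I would first split the outer summation as $\sum_{r\in R}=\sum_{w\in W}\sum_{r\in R^w}$; since every retailer under $w$ carries the identical index set $S^1=S^w$, the inner retailer sum produces $\sum_{r\in R^w}\hat{x}^{1r}_k=\bar{x}^w_k$ via~\eqref{eq:mappingx} and the aggregated coefficient $\sum_{r\in R^w}d^r_{kl}=d^w_{kl}$ on the setup variable $y^w_k$. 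The right-hand sides aggregate as $\sum_{r\in R}d^r_{1l}=d^p_{1l}$. Assembling the six regrouped blocks reproduces~\eqref{ineq:stdthreelevel} exactly, so $(\bar{x},\bar{s},\hat{y})$ satisfies it.

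The only delicate point is the bookkeeping for the level-$1$ terms: one must regroup the retailer sum by warehouses \emph{before} applying the mapping, relying on the consistency that every retailer under a warehouse $w$ has been assigned the same set $S^1=S^w$, and one must use the demand-aggregation identities $\sum_{r\in R^w}d^r_{kl}=d^w_{kl}$ and $\sum_{r\in R}d^r_{kl}=d^p_{kl}$ to recover the correct cumulative-demand coefficients on the setup variables. Beyond this reindexing there is no real difficulty, as the single- and two-level cases already supply the pattern.
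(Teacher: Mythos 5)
Your proposal is correct and follows essentially the same route as the paper: instantiate the $3LF$ three-level inequality \eqref{ineq:3levelthreelevel} for each retailer $r$ with $L^0=L^p$, $S^0=S^p$, $L^1=L^w$, $S^1=S^{\delta_w(r)}$, $L^2=L^r$, $S^2=S^r$, sum over all $r\in R$, and apply the mapping \eqref{eq:mappingx}--\eqref{eq:mappings} together with the demand-aggregation identities. Your added care about the warehouse-level regrouping (splitting $\sum_{r\in R}$ as $\sum_{w\in W}\sum_{r\in \delta(w)}$ and using that all retailers under the same warehouse share $S^1=S^w$) makes explicit a step the paper leaves implicit, but it is the same argument.
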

\begin{proof}

Consider an inequality \eqref{ineq:stdthreelevel} for specified $l$, $L^p$, $S^p$, $L^w$, $S^w$, $L^r$ and $S^r$
\begin{equation*}
    \sum_{k \in L^p\setminus S^p}x^p_k + \sum_{k \in S^p} d^p_{kl} y^p_k + \sum_{k \in L^{w}\setminus S^{w}} x^{w}_k + \sum_{k \in S^{w}} d^w_{kl} y^{w}_k + \sum_{k \in L^{r}\setminus S^{r}} x^{r}_k + \sum_{k \in S^{r}} d^r_{kl} y^{r}_k  \geq d^p_{1l}.
\end{equation*}

Let $L^0= L^p$ and $S^0= S^p$, $L^1= L^w$ and $S^1= S^w$ for every $w \in W$, and $L^2= L^r$ and $S^2= S^r$ for every $r \in R$. Summing inequalities \eqref{ineq:3levelthreelevel} with the aforementioned choices for every $r\in R$, we obtain
\begin{multline*}
    \sum_{r\in R} \sum_{k \in L^p\setminus S^p} \hat{x}^{0r}_k + \sum_{r\in R} \sum_{k \in S^p} d^r_{kl} \hat{y}^p_k + \sum_{r\in R} \sum_{k \in L^{\delta_w(r)}\setminus S^{\delta_w(r)}} \hat{x}^{1r}_k + \sum_{r\in R} \sum_{k \in S^{\delta_w(r)}} d^r_{kl} \hat{y}^{\delta_w(r)}_k  + \\ \sum_{r\in R} \sum_{k \in L^{r}\setminus S^{r}} \hat{x}^{2r}_k + \sum_{r\in R} \sum_{k \in S^{r}} d^r_{kl} \hat{y}^{r}_k   \geq \sum_{r\in R} d^r_{1l}.
\end{multline*}
This, together with \eqref{eq:mappingx} and \eqref{eq:mappings},  ensures that
\begin{equation*}
    \sum_{k \in L^p\setminus S^p}\bar{x}^p_k + \sum_{k \in S^p} d^p_{kl} \hat{y}^p_k + \sum_{k \in L^{w}\setminus S^{w}} \bar{x}^{w}_k + \sum_{k \in S^{w}} d^w_{kl} \hat{y}^{w}_k + \sum_{k \in L^{r}\setminus S^{r}} \bar{x}^{r}_k + \sum_{k \in S^{r}} d^r_{kl} \hat{y}^{r}_k  \geq d^p_{1l},
\end{equation*}
implying that $(\bar{x},\bar{s},\hat{y})$ satisfies \eqref{ineq:stdthreelevel}.

\end{proof}


\begin{prop}
$\underline{z}_{STD+} \leq \underline{z}_{3LF+}$.
\end{prop}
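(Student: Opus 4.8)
The plan is to exploit the four lemmas already established, which collectively show that the variable mapping \eqref{eq:mappingx}--\eqref{eq:mappings} carries any feasible point of the linear relaxation of $3LF+$ to a feasible point of the linear relaxation of $STD+$ of identical cost. Since both bounds are optima of minimization problems, exhibiting such a cost-preserving feasible map from the $3LF+$ polytope into the $STD+$ polytope immediately yields the desired inequality. In other words, nothing new needs to be discovered here; the proposition is the natural capstone assembled from Lemmas~\ref{lemma:3LFSTD}--\ref{lemma:threelevel}.

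Concretely, I would begin by letting $(\hat{x}^0,\hat{x}^1,\hat{x}^2,\hat{s}^0,\hat{s}^1,\hat{s}^2,\hat{y})$ be an optimal solution of the linear relaxation of $3LF+$, so that its objective value equals $\underline{z}_{3LF+}$ and it satisfies \eqref{3level-1}--\eqref{3level-7} together with all instances of \eqref{ineq:3levelsinglelevel}, \eqref{ineq:3leveltwolevel}, and \eqref{ineq:3levelthreelevel}. Applying the mapping \eqref{eq:mappingx}--\eqref{eq:mappings}, I obtain a candidate point $(\bar{x},\bar{s},\hat{y})$ in the space of $STD$. By Lemma~\ref{lemma:3LFSTD}, this point is feasible for the linear relaxation of $STD$ and has objective value equal to that of the original $3LF+$ solution, namely $\underline{z}_{3LF+}$.

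Next I would invoke Lemmas~\ref{lemma:singlelevel}, \ref{lemma:twolevel}, and \ref{lemma:threelevel} in turn to verify that $(\bar{x},\bar{s},\hat{y})$ additionally satisfies every single-level inequality \eqref{ineq:stdsinglelevel}, every two-level inequality \eqref{ineq:stdtwolevel}, and every three-level inequality \eqref{ineq:stdthreelevel}. Because each of these lemmas is stated for an arbitrary choice of the index sets on the standard side (and supplies the matching extended-side sets $S^b$, $L^b$ by summing over the retailer families $R^i$), quantifying over all such choices shows that $(\bar{x},\bar{s},\hat{y})$ satisfies the entire family of valid inequalities of $STD+$. Hence $(\bar{x},\bar{s},\hat{y})$ is feasible for the linear relaxation of $STD+$.

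Finally, since $\underline{z}_{STD+}$ is the minimum of the $STD+$ objective over its feasible region, which contains $(\bar{x},\bar{s},\hat{y})$, and since the objective value of $(\bar{x},\bar{s},\hat{y})$ equals $\underline{z}_{3LF+}$, I conclude $\underline{z}_{STD+} \leq \underline{z}_{3LF+}$. I do not expect a genuine obstacle once the lemmas are in place: the only point requiring care is the bookkeeping that confirms the correspondence between the index sets $(S^i,L^i)$ appearing in the standard inequalities and the sets $(S^b,L^b)$ used on the extended side, so that no standard inequality is left unaccounted for. This is exactly what Lemmas~\ref{lemma:singlelevel}--\ref{lemma:threelevel} guarantee, and the remaining work is notational rather than mathematical.
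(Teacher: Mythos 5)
Your proposal is correct and follows exactly the paper's route: the paper's proof of this proposition is simply the one-line observation that it follows from Lemmas~\ref{lemma:3LFSTD}, \ref{lemma:singlelevel}, \ref{lemma:twolevel} and \ref{lemma:threelevel}, and your write-up is just the fully spelled-out version of that assembly. No differences in substance.
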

\begin{proof}
It follows from Lemmas \ref{lemma:3LFSTD}, \ref{lemma:singlelevel}, \ref{lemma:twolevel} and \ref{lemma:threelevel}.
\end{proof}

\section{Preprocessing the multi-commodity formulation}
\label{sec:preprocessing}

This section details a preprocessing technique to reduce the size of the multi-commodity formulation for the uncapacitated three-level lot-sizing and replenishment problem with a distribution structure (3LSPD-U).
It has been noted in the literature that extended formulations can become computationally intractable as the sizes of the instances increase~\cite{VanWol06}. The proposed preprocessing goes into the direction of attempting to reduce the size of the formulation and make it more manageable to be tackled by a commercial mixed integer programming solver.
Consider the multi-commodity formulation described in Section~\ref{sec:bestliterature}. In what follows we show that, under certain conditions, variables can be set to zero (i.e., can be removed from the formulation) and an optimal solution for the problem can still be encountered, whenever one exists.

Firstly, remember that 3LSPD-U can be seen as an uncapacitated fixed-charge network flow problem (UFCNF).
It is well known that, in an extreme-point optimal solution for UFCNF, the underlying graph associated with the variables which assume values strictly between their lower and upper bounds is acyclic (see \citeA{PocWol06}). This implies that, for 3LSPD-U, there is an optimal solution in which, whenever items are transported from the warehouse to the retailer, they are used to satisfy all the demands of a set of consecutive periods for that retailer.

\begin{prop}\label{prop:preprocess}
Consider a retailer $r\in R$ and periods $1 \leq k<t \leq {|T|}$ for which
\begin{equation}\label{eq:preprocess}
d^r_t\times \sum_{l=k}^{t-1}hc^r_l \geq  d^r_t\times \sum_{l=k}^{t-1}hc^{\delta_w(r)}_l + sc^r_t.
\end{equation}
Variables $w^{2r}_{kt'}$ can be set to zero for every $t'\in \{t,\ldots,{|T|}\}$, and the guarantee of encountering an optimal solution, whenever it exists, is preserved.
\end{prop}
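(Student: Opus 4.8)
The plan is to show that some optimal solution has $w^{2r}_{kt'}=0$ for every $t'\in\{t,\ldots,{|T|}\}$; once such a solution is known to exist, fixing these variables to zero cannot remove the optimum. I may assume $d^r_t>0$, since $d^r_t=0$ forces $w^{2r}_{kt}=0$ through \eqref{mc-5} and, via \eqref{eq:preprocess}, also $sc^r_t=0$, so that case is trivial. Writing $\beta=\sum_{l=k}^{t-1}\bigl(hc^r_l-hc^{\delta_w(r)}_l\bigr)$ and dividing \eqref{eq:preprocess} by $d^r_t$ yields the two facts I will rely on, namely $\beta\ge 0$ and $d^r_t\,\beta\ge sc^r_t$.

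Next I would invoke the extreme-point/acyclicity property recalled before the proposition and take an optimal solution in which every positive warehouse-to-retailer shipment of $r$ serves a block of consecutive demand periods. Suppose $w^{2r}_{kt'}>0$ for some $t'\ge t$. Then the shipment made at period $k$ covers a consecutive block $\{k,\ldots,m\}$ with $m\ge t'\ge t$, so $w^{2r}_{kl}=d^r_l$ for all $l\in\{k,\ldots,m\}$, and since $t$ lies strictly inside this block no shipment to $r$ occurs at period $t$. I then perform a single exchange that splits this shipment: the sub-block $\{k,\ldots,t-1\}$ stays delivered at $k$, whereas the items for the sub-block $\{t,\ldots,m\}$ are kept in the warehouse over periods $k,\ldots,t-1$ (increasing $\sigma^{1r}$) and shipped to $r$ at period $t$ (redefining $w^{2r}_{tl}:=w^{2r}_{tl}+d^r_l$, adjusting $\sigma^{2r}$, and turning on $y^r_t$). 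Because the rerouted items were already at the warehouse by period $k$, the balance constraints \eqref{mc-1b} and \eqref{mc-2}, the bounds \eqref{mc-5}, and the nonnegativity of the redefined stocks all remain satisfied.

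It then remains to bound the cost change. For each demand period $t'\in\{t,\ldots,m\}$ the modification replaces retailer holding over $\{k,\ldots,t-1\}$ by warehouse holding over the same periods, changing the holding cost by exactly $-d^r_{t'}\beta$; summing gives a holding change of $-\beta\sum_{t'=t}^{m}d^r_{t'}$, while the setup cost increases by at most $sc^r_t$. Hence the total change satisfies
\[
\Delta \;\le\; sc^r_t-\beta\sum_{t'=t}^{m}d^r_{t'}\;\le\; sc^r_t-\beta\,d^r_t\;\le\; 0,
\]
using $\beta\ge 0$, $\sum_{t'=t}^{m}d^r_{t'}\ge d^r_t$, and $d^r_t\,\beta\ge sc^r_t$. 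The exchanged solution is therefore still optimal and its period-$k$ shipment covers only periods below $t$, so $w^{2r}_{kt'}=0$ for all $t'\ge t$, which is exactly what allows removing these variables.

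I expect the main obstacle to be the bookkeeping of the exchange rather than the cost comparison: one must check carefully that, after shifting the sub-block shipment from $k$ to $t$, the per-commodity balances \eqref{mc-1b}--\eqref{mc-2} and the nonnegativity of the redefined $\sigma^{1r}$ and $\sigma^{2r}$ genuinely hold, and that the setup accounting never adds more than $sc^r_t$. Once this is settled, the inequality above collapses directly onto the hypothesis \eqref{eq:preprocess}.
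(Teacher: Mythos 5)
Your proposal is correct and follows essentially the same route as the paper's proof: an exchange argument that postpones the warehouse-to-retailer shipment of the demand(s) for periods $\geq t$ from period $k$ to period $t$, trading retailer holding over $\{k,\ldots,t-1\}$ for warehouse holding plus at most one extra setup $sc^r_t$, with hypothesis \eqref{eq:preprocess} guaranteeing the cost does not increase. The only (cosmetic) difference is that you invoke the extreme-point consecutive-block property up front and shift the whole sub-block $\{t,\ldots,m\}$ in one exchange, whereas the paper shifts only the commodity $(r,t)$ and then appeals to the same property to conclude $w^{2r}_{kt'}=0$ for $t'>t$; your version makes that last step slightly more explicit.
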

\begin{proof}
Note that the left-hand side of \eqref{eq:preprocess} defines the total cost of storing the demand $d^r_t$ from periods $k$ up to $t$ at the retailer, while its right-hand side establishes the total cost of storing this demand from periods $k$ up to $t$ at the warehouse plus the setup cost implied for transporting it from the warehouse to the retailer in period $t$. Consider a solution $(\hat{w},\hat{\sigma},\hat{y})$ in which the conditions of the proposition hold such that $\hat{w}^{2r}_{kt} = d^r_t$ and the cost is $\hat{z}$. We build a new solution $(\bar{w},\bar{\sigma},\bar{y})$ in which $\bar{w}^{2r}_{kt}=0$, $\bar{w}^{2r}_{tt}= \hat{w}^{2r}_{kt}=d^r_t$, $\bar{y}^r_t=1$, and all other variables assume the same values as in $(\hat{w},\hat{\sigma},\hat{y})$. Due to \eqref{eq:preprocess}, the new cost is thus $\bar{z} \leq \hat{z} + d^r_t \times \sum_{l=k}^{t-1}hc^{\delta_w(r)}_l + sc^r_t - d^r_t\times \sum_{l=k}^{t-1}hc^r_l \leq \hat{z}$. This implies that variable ${w}^{2r}_{kt}$ can be set to zero. Consequently, due to the property of an  extreme-point optimal solution, ${w}^{2r}_{kt'}$ can also be set to zero for $t\leq t'\leq {|T|}$.
\end{proof}

We remark that{, as can be observed in inequality \eqref{eq:preprocess},} the potential reductions {can be} very sensitive to the costs {and demands} received as inputs.

\section{Multi-start randomized bottom-up {dynamic programming-based heuristic}}
\label{sec:randomizedheuristic}

In this section, we provide an easy-to-implement multi-start randomized bottom-up {dynamic programming-based heuristic.} {As commercial solvers can sometimes encounter difficulties in obtaining good quality feasible solutions earlier in the enumeration process using general mixed integer programming heuristics, the goal of the proposed heuristic is to fastly provide such solutions} in an attempt to speed up the process of solving instances to optimality. The proposed approach is somehow related to the work of \citeA{DelJeu03}. 
{Each iteration of the proposed heuristic follows a level-by-level nonintegrated planning in which levels are sequentially tackled in a bottom-up fashion using dynamic programming, considering that certain costs receive controlled randomized changes.
It applies greedy randomization~\cite{ResRib03}, which is widely used in combinatorial optimization, in order to allow diversification in the obtained solutions, but in an alternative manner. More specifically, the costs are randomized with certain control, differently from the standard way of randomizing the choices during the execution of the algorithm. In this way, one can still take advantage of the effectiveness of the dynamic programming approaches for the subproblems to be solved.}

The heuristic is described in Algorithm~\ref{alg:multis}. It takes as inputs the instance, a randomization factor $\alpha$, and the allowed number of iterations $it_{max}$. 
The best known objective value is set as $\infty$ in line \ref{MS-0}. Next, a randomized solution is constructed at each iteration of the \textbf{for} loop of lines~\ref{MS-1}-\ref{MS-1end}. 
Firstly, the setup costs are randomized in line \ref{MS-2} according to parameter $\alpha$.
Namely, for each facility $i\in W\cup R$ and each period $t\in T$, the randomized setup cost is calculated as
\begin{equation}
\bar{sc}^i_t = sc^i_t + rand(0,\alpha) sc^i_t,
\end{equation}
where $rand(0,\alpha)$ returns a random value using a uniform distribution $U[0,\alpha]$. 
Next, the amounts to be transported from the warehouses to the retailers are determined in line \ref{MS-3} by solving an uncapacitated lot-sizing for each retailer using dynamic programming~\cite{WagWhi58,WagVanKol92}. Afterwards, the amounts to be transported from the production plant to the warehouses are determined in line \ref{MS-4} by solving an uncapacitated lot-sizing for each warehouse, considering that the demands are given by the amounts to be transported to the retailers previously determined in line \ref{MS-3}. Finally, the amounts to be produced at the production plant in each period are determined in line \ref{MS-5} by solving an uncapacitated lot-sizing, considering that the demands are given by the amounts to be transported to the warehouses previously determined in line \ref{MS-4}. Whenever the objective value $\hat{z}$ of the new obtained solution improves over the best-known, the best-known solution and value are updated (lines \ref{MS-7}-\ref{MS-7.2}).

\begin{algorithm}[H]
\caption {MS-R-BU-DPH (instance, $\alpha$, $it_{max}$)}
\label{alg:multis}

     $z^* \leftarrow \infty$\;\label{MS-0}
    \For{$it = 1,...,it_{max} $}{\label{MS-1}
        Calculate the randomized setup costs for the facilities in $W\cup R$ according to parameter $\alpha$\;\label{MS-2}
        Determine $(\hat{x}^R,\hat{y}^R,\hat{s}^R)$ by optimally solving an uncapacitated lot-sizing for each retailer $r\in R$ with randomized setup costs, using dynamic programming\;\label{MS-3}
        Determine $(\hat{x}^W,\hat{y}^W,\hat{s}^W)$ by optimally solving an uncapacitated lot-sizing for each warehouse $w\in W$ with the demands determined by $\hat{x}^R$ and randomized setup costs, using dynamic programming\;\label{MS-4}
        Determine $(\hat{x}^p,\hat{y}^p,\hat{s}^p)$ by optimally solving an uncapacitated lot-sizing for the plant with the demands determined by $x^W$, using dynamic programming, and set $\hat{z}$ as the cost of the complete solution considering the original setup costs\;\label{MS-5}
        \If{$\hat{z} < z^*$}{\label{MS-7}
            $(x^*,y^*,s^*) \leftarrow (\hat{x},\hat{y},\hat{s})$\;\label{MS-7.1}
            $z^* \leftarrow \hat{z}$\;\label{MS-7.2}
        }
    }\label{MS-1end}
    \Return $(x^*,y^*,s^*)$\;\label{MS-8}
\end{algorithm}


\section{Computational experiments}
\label{sec:experiments}

This section reports the computational experiments conducted to assess the performance of the proposed approaches.
All computational experiments were carried out on a machine running under Ubuntu GNU/Linux, with an Intel(R) Core(TM) i5-3740 CPU @ 3.20GHz processor and 8Gb of RAM. The algorithms were coded in Julia v1.4.2, using JuMP v0.18.6. The formulations were solved using Gurobi 9.0.2 with the standard configurations, except the relative optimality tolerance gap which was defined as $10^{-6}$ due to the magnitude of the costs and the algorithm to solve the root node of the multi-commodity formulation which was set to the barrier method given the characteristics of the formulation in question (large and with many equality constraints). A time limit of 3600 seconds was imposed for every execution of the MIP solver.
The goals of the performed experiments were twofold. Firstly, we wanted to analyze the effectiveness of the described valid inequalities both in terms of the offered bounds and of the improvements achieved over the plain formulations to optimally solve the problem. Secondly, we desired to assess the quality of the solutions obtained by the proposed multi-start randomized bottom-up dynamic programming{-based} heuristic as well as the speed up achieved by using the heuristic together with the preprocessing when using the multi-commodity formulation. 

\subsection{Benchmark instances}\label{sec:instancias}

The computational experiments were executed using the instances proposed in \citeA{GruBazCorJan19}, where more details can be obtained.
For the instances in this benchmark set, $P$ is a singleton with a unique plant, $|R| \in \{50,100,200\}$ , $|W| \in \{5, 10, 15, 20\}$, and $|T| \in \{15,30\}$. The demands of each retailer were generated using a uniform distribution $U[5, 100]$. The fixed costs for the production plant were defined using a uniform distribution $U[30000, 45000]$. The fixed costs for the warehouses were determined using a uniform distribution $U[1500, 4500]$. The fixed costs for the retailers were specified using a uniform distribution $U[5, 100]$. The unit inventory holding costs for the production plant were set to $0.25$. The unit inventory holding costs for the warehouses were set to $0.5$ and the unit inventory holding costs for the retailers were generated using a uniform distribution $U[0.5,1]$. The demands and fixed costs are generated as integer values and the holding costs take continuous values. The instances are organized in instance groups containing five instances each with similar characteristics. Each instance group is identified by $|R|$\_$|T|$\_$|W|$\_typeD\_typeF, where typeD and typeF define respectively the characteristics of the demands and fixed costs, which can be either static (S) or dynamic (D).
{
The instances can be classified as balanced or unbalanced. In the balanced networks, each warehouse has nearly the same number of retailers. In the unbalanced networks, around 20\% of the warehouses concentrate 80\% of the retailers. Notice that there are 960 instances (half of them are balanced while the other half are unbalanced).
}

We remark that each line in the tables presented in the remainder of this section corresponds to the instances belonging to the appropriate instance group.

\subsection{Implementation details and settings}

The separations of inequalities \eqref{ineq:stdsinglelevel}, \eqref{ineq:stdtwolevel}, \eqref{ineq:stdthreelevel}, \eqref{ineq:3levelsinglelevel}, \eqref{ineq:3leveltwolevel} and \eqref{ineq:3levelthreelevel} were performed with straightforward implementations. Namely, with all the parameters fixed but the choices of the $x$ and $y$ variables composing the inequality, the approach defines for each involved combination of facility and time period, whether the $x$ or $y$ variables will compose the inequality, and thus the separation can be performed by inspection similarly to the $(l,S)$-inequalities for the uncapacitated lot-sizing~\cite{BarWol84,PocWol06}. The separations were implemented as callbacks and are only executed at the root node. All the violated inequalities are provided to the solver.

All the settings and parameters were defined based on preliminary experiments which took into consideration a small subset containing  $\approx 3\%$ of the instances, randomly selected, with varying sizes and characteristics. For the separation procedures, the tolerance for violation was set to 10.0 (the values 1.0, 10.0, and 100.0 were tested). The maximum number of cutting plane rounds was set to 20 (the values 20, 30, and 40 were tested).
Inequalities \eqref{ineq:stdsinglelevel} and \eqref{ineq:3levelsinglelevel} are separated in every round. Inequalities \eqref{ineq:stdtwolevel} and \eqref{ineq:3leveltwolevel} are separated every five rounds. Inequalities \eqref{ineq:stdthreelevel} and \eqref{ineq:3levelthreelevel} are separated every ten rounds. This difference in the frequency of separation for the different families of inequalities is motivated by the large difference in the number of inequalities that can be violated for these families in a given round as well as the computational complexity for performing such tasks.

The settings for the multi-start randomized bottom-up {dynamic programming-based heuristic} were determined as follows. The maximum number of iterations $it_{max}$ was set to 500 (the values 100, 200, 500, 700, and 1000 were tested, but 500 offered a good compromise between running time and solution quality). The randomization parameter $\alpha$ was set to 0.20 (the values 0.05, 0.10, 0.15, 0.20, and 0.25 were tested).

\subsection{Results analyzing the use of valid inequalities}
\label{sec:resultsvalidinequalities}

The computational experiments assessing the effectiveness of the proposed valid inequalities are summarized in Tables \ref{tab:balNT15}-\ref{tab:unbalNT30}.
Results are presented for the standard formulation (STD), the standard formulation strengthened with valid inequalities (STD+), the three-level lot-sizing based formulation (3LF), and the three-level lot-sizing based formulation strengthened with valid inequalities (3LF+).
In each of these tables, the first column represents the instance group. Next, for each of the formulations, the table presents the number of instances solved to optimality, the average time in seconds (time), and the average number of nodes (nodes) for the instances solved to optimality as well as the average open gap (gap) for the unsolved instances, calculated as $100\times \frac{bestsol-bestbound}{bestsol}$ for each instance in the group. The presence of a '--' in the columns time and nodes indicate that none of the instances in the group were solved to optimality, while its presence in the column gap means that they were all solved to optimality. The last two lines provide, respectively, the average and total sum considering the lines in the table.

Table~\ref{tab:balNT15} shows the results when using STD, 3LF, STD+ and 3LF+ for the balanced instances with $|T|= 15$. All instances with $|R| = 50$ were solved to optimality using all the formulations. It can be observed that 3LF usually presented lower average times, while 3LF+ achieved lower averages for the number of nodes. For the instances with $|R|=100$, STD+ presents lower average times for most of the instance groups, while 3LF+ presents again better results when it comes to the average number of nodes. It is possible to see that STD already started finding difficulties to solve instances of this size to optimality. For those instances with $|R| = 200$, only 3LF+ solved all of them to optimality, and it also obtained the best results regarding the average number of nodes.

Table \ref{tab:unbalNT15} summarizes the results for the unbalanced instances with $|T|=15$. All instances with $|R| = 50$ were solved to optimality with all the formulations. STD and 3LF had similar performance when it comes to the average times, whereas 3LF+ achieved better results when we consider the average number of nodes. For the instances with $|R|=100$, 3LF+ outperformed all other formulations when we consider the average times. For those instances with $|R| = 200$, only 3LF+ was able to solve all of them to optimality.

Table \ref{tab:balNT30} presents the results for the balanced instances with $|T| = 30$. For the instances with $|R| = 50$, only 3LF+ was able to solve all of them to optimality. We can see that STD+ usually presents lower average times and 3LF+ outperforms the others when it comes to the average number of nodes. For those with $|R| = 100$, STD+ solved more instances to optimality than 3LF+, and STD+ and 3LF+ achieved lower optimality gaps for the unsolved instances. For the instances with $|R|=200$, the table shows that none of them were solved to optimality, and STD+ achieved better results when it comes to the open gaps. 

Table \ref{tab:unbalNT30} displays the results for the unbalanced instances with $|T|=30$. For the instances with $|R|=50$, we can see that only 3LF+ solved all of them to optimality. Additionally, 3LF+ achieved the majority of the best average values when considering the time and number of nodes. For those instances with $|R|=100$, 3LF+ was the only formulation to solve some of them to optimality and, besides, achieved lower gaps for most of the unsolved instance groups. None of the instances with $|R|=200$ were solved to optimality. For these instances, the majority of the lower gaps were obtained by 3LF+.

\begin{landscape}

\begin{table}[H]
\caption{Results obtained by STD, 3LF, STD+ and 3LF+ for balanced instances with $|T|=15$.}\label{tab:balNT15}
\centering
\scriptsize
\begin{tabular}{l|rrrr|rrrr|rrrr|rrrr}
  \hline
  & \multicolumn{4}{c|}{STD} & \multicolumn{4}{c|}{3LF} & \multicolumn{4}{c|}{STD+} & \multicolumn{4}{c}{3LF+} \\ 
instancegroup & opt & time & nodes & gap & opt & time & nodes & gap & opt & time & nodes & gap & opt & time & nodes & gap \\ 
  \hline
50\_15\_5\_DD\_DF & 5 & 9.0 & 4617.6 & -- & 5 & 8.9 & 510.6 & -- & 5 & \textbf{4.0} & 1135.2 & -- & 5 & 6.3 & \textbf{20.4} & -- \\ 
  50\_15\_5\_DD\_SF & 5 & 102.4 & 70486.4 & -- & 5 & 20.9 & 1569.2 & -- & 5 & \textbf{4.1} & 1270.2 & -- & 5 & 6.9 & \textbf{27.0} & --\\ 
  50\_15\_5\_SD\_DF & 5 & 11.5 & 5068.0 & -- & 5 & 6.6 & 367.4 & -- & 5 & \textbf{3.8} & 708.6 & -- & 5 & 5.7 & \textbf{26.2} & --\\ 
  50\_15\_5\_SD\_SF & 5 & 83.4 & 61234.0 & -- & 5 & 10.3 & 860.2 & -- & 5 & \textbf{3.9} & 300.4 & -- & 5 & 5.7 & \textbf{22.8} & --\\ 
  50\_15\_10\_DD\_DF & 5 & \textbf{1.3} & 797.0 & -- & 5 & 1.9 & 2.6 & -- & 5 & 3.4 & 129.2 & -- & 5 & 3.2 & \textbf{1.0} & --\\ 
  50\_15\_10\_DD\_SF & 5 & \textbf{3.3} & 1821.8 & -- & 5 & 3.5 & 44.8 & -- & 5 & 3.5 & 187.2 & -- & 5 & 4.0 & \textbf{3.8} & --\\ 
  50\_15\_10\_SD\_DF & 5 & \textbf{2.1} & 2349.4 & -- & 5 & 3.4 & 120.4 & -- & 5 & 3.5 & 268.4 & -- & 5 & 3.7 & \textbf{2.0} & --\\ 
  50\_15\_10\_SD\_SF & 5 & 18.0 & 15720.6 & -- & 5 & 3.2 & 426.0 & -- & 5 & 3.8 & 313.4 & -- & 5 & \textbf{3.1} & \textbf{1.0} & --\\ 
  50\_15\_15\_DD\_DF & 5 & 1.2 & 171.8 & -- & 5 & \textbf{1.0} & \textbf{1.0} & -- & 5 & 3.9 & 142.0 & -- & 5 & 3.3 & \textbf{1.0} & --\\ 
  50\_15\_15\_DD\_SF & 5 & 2.0 & 2148.4 & -- & 5 & \textbf{1.1} & \textbf{1.0} & -- & 5 & 3.7 & 14.0 & -- & 5 & 3.1 & \textbf{1.0} & --\\ 
  50\_15\_15\_SD\_DF & 5 & 1.1 & 172.2 & -- & 5 & \textbf{1.0} & 2.4 & -- & 5 & 3.7 & 111.8 & -- & 5 & {3.2} & \textbf{1.0} & --\\ 
  50\_15\_15\_SD\_SF & 5 & 5.6 & 2515.2 & -- & 5 & \textbf{2.0} & 213.4 & -- & 5 & 3.8 & 101.2 & -- & 5 & {3.2} & \textbf{1.0} & --\\ 
  50\_15\_20\_DD\_DF & 5 & 1.1 & 74.4 & -- & 5 & \textbf{0.8} & \textbf{1.0} & -- & 5 & 3.8 & 51.6 & -- & 5 & {3.1} & \textbf{1.0} & --\\ 
  50\_15\_20\_DD\_SF & 5 & 1.1 & 68.2 & -- & 5 & \textbf{1.0} & \textbf{1.0} & -- & 5 & 3.5 & 3.0 & -- & 5 & {3.0} & \textbf{1.0} & --\\ 
  50\_15\_20\_SD\_DF & 5 & 1.2 & 64.4 & -- & 5 & \textbf{0.8} & \textbf{1.0} & -- & 5 & 3.8 & 29.6 & -- & 5 & {3.0} & \textbf{1.0} & --\\ 
  50\_15\_20\_SD\_SF & 5 & 1.7 & 1007.6 & -- & 5 & \textbf{1.2} & \textbf{1.0} & -- & 5 & 3.4 & 3.8 & -- & 5 & {3.2} & \textbf{1.0} & --\\ 
   \hline

100\_15\_5\_DD\_DF & 5 & 399.9 & 56341.0 & -- & 5 & 78.1 & 1339.4 & -- & 5 & \textbf{25.4} & 9653.6 & -- & 5 & 41.3 & \textbf{21.8} & --\\ 
  100\_15\_5\_DD\_SF & 4 & 1868.4 & 288582.2 & 0.02 & 5 & 575.9 & 19476.2 & -- & 5 & 78.6 & 33150.2 & -- & 5 & \textbf{39.5} & \textbf{47.8} & --\\ 
  100\_15\_5\_SD\_DF & 5 & 292.0 & 65225.2 & -- & 5 & 69.2 & 1276.2 & -- & 5 & \textbf{29.4} & 12777.4 & -- & 5 & 31.1 & \textbf{43.4} & --\\ 
  100\_15\_5\_SD\_SF & 5 & 1249.2 & 497633.0 & -- & 5 & 1025.4 & 349851.4 & -- & 5 & \textbf{21.4} & 7565.4 & -- & 5 & 23.5 & \textbf{89.4} & --\\ 
  100\_15\_10\_DD\_DF & 3 & 2325.5 & 654996.7 & 1.13 & 5 & 136.8 & 4764.6 & -- & 5 & \textbf{17.2} & 5038.6 & -- & 5 & 31.1 & \textbf{50.8} & --\\ 
  100\_15\_10\_DD\_SF & 3 & 1886.2 & 583844.0 & 0.02 & 5 & 261.4 & 17313.0 & -- & 5 & \textbf{11.9} & 2099.8 & -- & 5 & 24.0 & \textbf{12.2} & --\\ 
  100\_15\_10\_SD\_DF & 4 & 494.7 & 184435.5 & 0.54 & 5 & 119.7 & 1955.8 & -- & 5 & 22.1 & 6022.8 & -- & 5 & \textbf{18.3} & \textbf{9.2} & --\\ 
  100\_15\_10\_SD\_SF & 0 & -- & -- & 0.07 & 5 & 299.0 & 52669.0 & -- & 5 & \textbf{6.4} & 589.8 & -- & 5 & 15.0 & \textbf{73.8} & --\\ 
  100\_15\_15\_DD\_DF & 5 & 212.9 & 62496.2 & -- & 5 & 38.0 & 403.8 & -- & 5 & \textbf{6.2} & 701.6 & -- & 5 & 9.0 & \textbf{1.0} & --\\ 
  100\_15\_15\_DD\_SF & 4 & 808.5 & 453046.0 & 0.03 & 5 & 29.7 & 950.0 & -- & 5 & \textbf{6.7} & 1122.4 & -- & 5 & 10.1 & \textbf{1.0} & --\\ 
  100\_15\_15\_SD\_DF & 5 & 576.5 & 214981.0 & -- & 5 & 58.5 & 1225.4 & -- & 5 & \textbf{9.9} & 2170.4 & -- & 5 & 14.0 & \textbf{14.4} & --\\ 
  100\_15\_15\_SD\_SF & 1 & 270.0 & 96429.0 & 0.01 & 5 & 62.4 & 3674.0 & -- & 5 & \textbf{5.9} & 122.8 & -- & 5 & 11.2 & \textbf{11.8} & --\\ 
  100\_15\_20\_DD\_DF & 5 & 77.0 & 24173.0 & -- & 5 & 31.3 & 616.2 & -- & 5 & 14.5 & 5551.4 & -- & 5 & \textbf{13.3} & \textbf{13.2} & --\\ 
  100\_15\_20\_DD\_SF & 5 & 71.5 & 22093.2 & -- & 5 & 15.5 & 153.8 & -- & 5 & 11.1 & 2733.6 & -- & 5 & \textbf{8.8} & \textbf{1.0} & --\\ 
  100\_15\_20\_SD\_DF & 5 & 55.7 & 16229.2 & -- & 5 & 25.4 & 976.2 & -- & 5 & \textbf{7.1} & 1335.2 & -- & 5 & 10.1 & \textbf{4.2} & --\\ 
  100\_15\_20\_SD\_SF & 5 & 435.3 & 308889.2 & -- & 5 & 28.3 & 1281.8 & -- & 5 & \textbf{6.0} & 118.0 & -- & 5 & 8.2 & \textbf{1.0} & --\\ 
   \hline

200\_15\_5\_DD\_DF & 0 & -- & -- & 1.09 & 5 & 1242.9 & 4575.2 & -- & 2 & 2103.4 & 691950.0 & 0.05 & \textbf{5} & 458.5 & \textbf{744.6} & --\\ 
  200\_15\_5\_DD\_SF & 0 & -- & -- & 2.26 & 0 & -- & -- & 2.41 & 0 & -- & -- & 0.07 & \textbf{5} & 1176.6 & \textbf{6776.6} & --\\ 
  200\_15\_5\_SD\_DF & 0 & -- & -- & 0.97 & 4 & 2644.9 & 15413.2 & 0.03 & 2 & 1103.9 & 428699.5 & 0.06 & \textbf{5} & 334.7 & \textbf{621.8} & --\\ 
  200\_15\_5\_SD\_SF & 0 & -- & -- & 1.42 & 1 & 2353.3 & 20857.0 & 0.36 & 4 & 1129.8 & 282235.5 & 0.01 & \textbf{5} & 896.7 & \textbf{8899.6} & --\\ 
  200\_15\_10\_DD\_DF & 0 & -- & -- & 3.25 & 2 & 777.2 & 8539.5 & 1.20 & 3 & 891.9 & 294587.0 & 0.03 & \textbf{5} & 484.8 & \textbf{627.8} & --\\ 
  200\_15\_10\_DD\_SF & 0 & -- & -- & 2.44 & 0 & -- & -- & 2.01 & 4 & 993.8 & 341265.8 & 0.04 & \textbf{5} & 348.7 & \textbf{2.0} & --\\ 
  200\_15\_10\_SD\_DF & 0 & -- & -- & 2.65 & 5 & 1822.0 & 4771.2 & -- & 2 & 226.0 & 79713.0 & 0.04 & \textbf{5} & 356.6 & \textbf{486.4} & --\\ 
  200\_15\_10\_SD\_SF & 0 & -- & -- & 2.08 & 0 & -- & -- & 0.03 & 5 & \textbf{137.8} & 23658.4 & -- & 5 & 211.4 & \textbf{1.0} & --\\ 
  200\_15\_15\_DD\_DF & 0 & -- & -- & 3.77 & 4 & 1807.9 & 5400.2 & 0.03 & 4 & 183.8 & 48555.0 & 0.03 & \textbf{5} & 350.9 & \textbf{192.0} & --\\ 
  200\_15\_15\_DD\_SF & 0 & -- & -- & 4.04 & 4 & 2860.7 & 19021.0 & 0.01 & 3 & 1594.2 & 605558.7 & 0.02 & \textbf{5} & 267.6 & \textbf{1.4} & --\\ 
  200\_15\_15\_SD\_DF & 0 & -- & -- & 3.98 & 4 & 2362.7 & 20079.5 & 3.59 & 3 & 644.7 & 193569.3 & 0.05 & \textbf{5} & 390.3 & \textbf{1579.0} & --\\ 
  200\_15\_15\_SD\_SF & 0 & -- & -- & 3.05 & 1 & 2352.4 & 30020.0 & 0.01 & 5 & \textbf{56.9} & 4867.4 & -- & 5 & 783.0 & \textbf{13235.4} & --\\ 
  200\_15\_20\_DD\_DF & 0 & -- & -- & 3.65 & 3 & 2479.5 & 13489.3 & 0.22 & 5 & 628.1 & 196638.0 & -- & 5 & \textbf{271.5} & \textbf{1320.0} & --\\ 
  200\_15\_20\_DD\_SF & 0 & -- & -- & 2.65 & 2 & 2539.6 & 50112.5 & 0.27 & 5 & \textbf{71.5} & 6053.4 & -- & 5 & 139.8 & \textbf{1.0} & --\\ 
  200\_15\_20\_SD\_DF & 0 & -- & -- & 3.40 & 5 & 2275.9 & 6497.4 & -- & 4 & 1385.1 & 547147.2 & 0.00 & \textbf{5} & 285.8 & \textbf{2157.8} & --\\ 
  200\_15\_20\_SD\_SF & 0 & -- & -- & 2.58 & 2 & 2601.8 & 64801.5 & 0.05 & 5 & \textbf{140.9} & 27578.2 & -- & 5 & 225.7 & \textbf{3733.0} & --\\ 
   \hline
   
        Average &  & 363.5 & 119281.0 & 1.96 &  & 689.8 & 16125.1 & 0.79 &  & 247.5 & 82289.3 & 0.04 &  & 153.2 & 851.8 & -- \\ 
  Total & 144 &  &  &  & 202 &  &  &  & 216 &  &  &  & {\bf 240} &  &  &  \\ 
\hline

\end{tabular}
\end{table}

\begin{table}[H]
\caption{Results obtained by STD, 3LF, STD+ and 3LF+ for unbalanced instances with $|T|=15$.}\label{tab:unbalNT15}
\centering
\scriptsize
\begin{tabular}{l|rrrr|rrrr|rrrr|rrrr}
  \hline
  & \multicolumn{4}{c|}{STD} & \multicolumn{4}{c|}{3LF} & \multicolumn{4}{c|}{STD+} & \multicolumn{4}{c}{3LF+} \\ 
instancegroup & opt & time & nodes & gap & opt & time & nodes & gap & opt & time & nodes & gap & opt & time & nodes & gap \\ 
  \hline
  
50\_15\_5\_DD\_DF & 5 & \textbf{2.2} & 991.0 & -- & 5 & 7.5 & 133.4 & -- & 5 & 4.6 & 1737.2 & -- & 5 & 7.0 & \textbf{1.4} & --\\ 
  50\_15\_5\_DD\_SF & 5 & 7.3 & 2321.2 & -- & 5 & 15.0 & 888.2 & -- & 5 & \textbf{5.1} & 2084.2 & -- & 5 & 5.5 & \textbf{1.0} & --\\ 
  50\_15\_5\_SD\_DF & 5 & \textbf{2.6} & 1275.4 & -- & 5 & 6.6 & 19.0 & -- & 5 & 5.8 & 3702.2 & -- & 5 & 4.4 & \textbf{1.0} & --\\ 
  50\_15\_5\_SD\_SF & 5 & 6.3 & 3736.2 & -- & 5 & 13.6 & 972.8 & -- & 5 & \textbf{5.1} & 1273.2 & -- & 5 & 8.4 & \textbf{389.8} & --\\ 
  50\_15\_10\_DD\_DF & 5 & \textbf{2.0} & 973.0 & -- & 5 & 4.0 & 14.6 & -- & 5 & 3.7 & 414.4 & -- & 5 & 4.0 & \textbf{1.0} & --\\ 
  50\_15\_10\_DD\_SF & 5 & 15.6 & 5303.0 & -- & 5 & 6.5 & 379.6 & -- & 5 & \textbf{4.4} & 1590.8 & -- & 5 & 6.1 & \textbf{4.8} & --\\ 
  50\_15\_10\_SD\_DF & 5 & \textbf{2.3} & 1426.4 & -- & 5 & 6.5 & 129.4 & -- & 5 & 3.7 & 416.2 & -- & 5 & 4.0 & \textbf{3.0} & --\\ 
  50\_15\_10\_SD\_SF & 5 & 20.0 & 7578.4 & -- & 5 & 5.7 & 219.8 & -- & 5 & \textbf{4.0} & 442.6 & -- & 5 & 5.9 & \textbf{13.6} & --\\ 
  50\_15\_15\_DD\_DF & 5 & \textbf{1.1} & 660.0 & -- & 5 & 2.0 & 16.0 & -- & 5 & 4.0 & 98.8 & -- & 5 & 3.6 & \textbf{3.6} & --\\ 
  50\_15\_15\_DD\_SF & 5 & 6.3 & 3952.0 & -- & 5 & \textbf{2.7} & 57.0 & -- & 5 & 3.6 & 123.8 & -- & 5 & 3.2 & \textbf{1.0} & --\\ 
  50\_15\_15\_SD\_DF & 5 & \textbf{1.2} & 720.4 & -- & 5 & 2.7 & 21.6 & -- & 5 & 3.7 & 200.8 & -- & 5 & 3.2 & \textbf{1.0} & --\\ 
  50\_15\_15\_SD\_SF & 5 & 10.1 & 4748.4 & -- & 5 & \textbf{3.7} & 94.4 & -- & 5 & 3.9 & 58.4 & -- & 5 & 3.8 & \textbf{1.0} & --\\ 
  50\_15\_20\_DD\_DF & 5 & 1.1 & 85.6 & -- & 5 & \textbf{0.8} & \textbf{1.0} & -- & 5 & 3.7 & 30.0 & -- & 5 & 3.0 & \textbf{1.0} & --\\ 
  50\_15\_20\_DD\_SF & 5 & 1.4 & 281.4 & -- & 5 & \textbf{1.1} & 19.4 & -- & 5 & 3.6 & 6.4 & -- & 5 & 2.9 & \textbf{1.0} & --\\ 
  50\_15\_20\_SD\_DF & 5 & 1.2 & 652.8 & -- & 5 & \textbf{0.9} & \textbf{1.0} & -- & 5 & 3.7 & 62.2 & -- & 5 & 3.1 & \textbf{1.0} & --\\ 
  50\_15\_20\_SD\_SF & 5 & 2.3 & 696.0 & -- & 5 & \textbf{1.4} & 1.8 & -- & 5 & 3.8 & 28.4 & -- & 5 & 3.1 & \textbf{1.0} & --\\ 
   \hline

100\_15\_5\_DD\_DF & 5 & 31.4 & 10049.8 & -- & 5 & 34.9 & 209.4 & -- & 5 & 394.2 & 333252.6 & -- & 5 & \textbf{21.5} & \textbf{18.2} & --\\ 
  100\_15\_5\_DD\_SF & 5 & 112.7 & 34257.6 & -- & 5 & 34.0 & 1100.2 & -- & 5 & 162.4 & 100392.0 & -- & 5 & \textbf{27.1} & \textbf{95.0} & --\\ 
  100\_15\_5\_SD\_DF & 5 & 22.4 & 5764.6 & -- & 5 & 31.0 & 185.4 & -- & 5 & 141.3 & 105049.2 & -- & 5 & \textbf{14.7} & \textbf{1.0} & --\\ 
  100\_15\_5\_SD\_SF & 5 & 56.2 & 10524.0 & -- & 5 & 65.9 & 4789.0 & -- & 5 & 98.0 & 64757.4 & -- & 5 & \textbf{23.6} & \textbf{1344.6} & --\\ 
  100\_15\_10\_DD\_DF & 5 & 41.5 & 13306.0 & -- & 5 & 44.5 & 147.8 & -- & 5 & 70.2 & 42429.4 & -- & 5 & \textbf{28.2} & \textbf{110.6} & --\\ 
  100\_15\_10\_DD\_SF & 5 & 150.2 & 52508.4 & -- & 5 & 58.9 & 2198.2 & -- & 5 & 69.4 & 30275.8 & -- & 5 & \textbf{29.2} & \textbf{30.2} & --\\ 
  100\_15\_10\_SD\_DF & 5 & 39.1 & 10885.2 & -- & 5 & 32.3 & 330.8 & -- & 5 & 49.6 & 26875.8 & -- & 5 & \textbf{15.9} & \textbf{1.4} & --\\ 
  100\_15\_10\_SD\_SF & 5 & 65.5 & 13061.6 & -- & 5 & 50.5 & 3549.8 & -- & 5 & 25.3 & 10449.2 & -- & 5 & \textbf{17.3} & \textbf{179.4} & --\\ 
  100\_15\_15\_DD\_DF & 5 & 36.0 & 4637.4 & -- & 5 & 53.6 & 662.8 & -- & 5 & 21.3 & 6156.0 & -- & 5 & \textbf{13.5} & \textbf{2.8} & --\\ 
  100\_15\_15\_DD\_SF & 5 & 74.0 & 14396.6 & -- & 5 & 77.7 & 2152.0 & -- & 5 & 25.7 & 6578.0 & -- & 5 & \textbf{27.0} & \textbf{1.8} & --\\ 
  100\_15\_15\_SD\_DF & 5 & 77.5 & 42603.0 & -- & 5 & 39.7 & 426.4 & -- & 5 & 36.5 & 18592.0 & -- & 5 & \textbf{20.0} & \textbf{18.2} & --\\ 
  100\_15\_15\_SD\_SF & 5 & 102.7 & 41805.0 & -- & 5 & 81.0 & 4226.2 & -- & 5 & 26.6 & 10533.6 & -- & 5 & \textbf{22.2} & \textbf{508.6} & --\\ 
  100\_15\_20\_DD\_DF & 5 & 51.7 & 10813.6 & -- & 5 & 33.8 & 721.8 & -- & 5 & 26.7 & 9280.6 & -- & 5 & \textbf{26.1} & \textbf{40.2} & --\\ 
  100\_15\_20\_DD\_SF & 4 & 134.1 & 34781.2 & 0.02 & 5 & 63.6 & 5399.2 & -- & 5 & \textbf{29.8} & 10807.2 & -- & 5 & 41.7 & \textbf{914.0} & --\\ 
  100\_15\_20\_SD\_DF & 5 & 37.4 & 6834.8 & -- & 5 & 31.1 & 435.4 & -- & 5 & 21.6 & 6518.6 & -- & 5 & \textbf{17.2} & \textbf{86.8} & --\\ 
  100\_15\_20\_SD\_SF & 5 & 713.2 & 632343.8 & -- & 5 & 144.4 & 16630.2 & -- & 5 & \textbf{17.9} & 9950.4 & -- & 5 & 21.6 & \textbf{161.6} & --\\ 
   \hline

200\_15\_5\_DD\_DF & 2 & 557.2 & 146939.0 & 0.02 & 5 & 238.5 & 1200.8 & -- & 3 & 576.9 & 215702.7 & 0.04 & 5 & \textbf{155.4} & \textbf{323.8} & --\\ 
  200\_15\_5\_DD\_SF & 0 & -- & -- & 0.03 & 5 & \textbf{298.4} & \textbf{1829.8} & -- & 0 & -- & -- & 0.04 & 5 & 374.6 & 4836.4 & --\\ 
  200\_15\_5\_SD\_DF & 2 & 1550.8 & 503084.5 & 0.03 & 5 & 275.0 & 611.2 & -- & 1 & 126.5 & 36605.0 & 0.04 & 5 & \textbf{102.3} & \textbf{1.0} & --\\ 
  200\_15\_5\_SD\_SF & 1 & 2593.6 & 712717.0 & 0.01 & 5 & 934.9 & \textbf{12154.6} & -- & 3 & 798.1 & 245763.0 & 0.00 & 5 & \textbf{673.4} & 39953.2 & --\\ 
  200\_15\_10\_DD\_DF & 2 & 1219.3 & 264328.5 & 0.03 & 5 & 360.7 & 1327.8 & -- & 1 & 958.1 & 291938.0 & 0.04 & 5 & \textbf{219.9} & \textbf{101.2} & --\\ 
  200\_15\_10\_DD\_SF & 0 & -- & -- & 1.37 & 4 & 2133.8 & 18597.5 & 0.22 & 1 & 2559.0 & 700171.0 & 0.04 & \textbf{5} & 240.0 & \textbf{1349.4} & --\\ 
  200\_15\_10\_SD\_DF & 1 & 2136.4 & 631927.0 & 0.14 & 5 & 830.7 & 4560.4 & -- & 0 & -- & -- & 0.05 & 5 & \textbf{275.1} & \textbf{2232.8} & --\\ 
  200\_15\_10\_SD\_SF & 0 & -- & -- & 0.55 & 5 & 1927.0 & 68226.4 & -- & 5 & 757.2 & 141809.8 & -- & 5 & \textbf{565.5} & \textbf{10755.2} & --\\ 
  200\_15\_15\_DD\_DF & 1 & 1386.2 & 114121.0 & 0.02 & 5 & 947.5 & 3190.6 & -- & 1 & 1195.7 & 415905.0 & 0.03 & 5 & \textbf{221.4} & \textbf{514.8} & --\\ 
  200\_15\_15\_DD\_SF & 1 & 2910.5 & 194040.0 & 1.10 & 4 & 1720.8 & 13740.5 & 0.01 & 1 & 1176.9 & 140399.0 & 0.04 & \textbf{5} & 422.2 & \textbf{3812.4} & --\\ 
  200\_15\_15\_SD\_DF & 1 & 3025.9 & 781870.0 & 0.03 & 5 & 1367.0 & 3352.8 & -- & 1 & 267.0 & 61281.0 & 0.02 & 5 & \textbf{207.2} & \textbf{1168.0} & --\\ 
  200\_15\_15\_SD\_SF & 0 & -- & -- & 0.40 & 4 & 2713.9 & 92533.5 & 0.06 & 4 & 681.2 & 126569.2 & 0.02 & \textbf{5} & 634.2 & \textbf{16261.6} & --\\ 
  200\_15\_20\_DD\_DF & 3 & 1862.2 & 217406.7 & 1.09 & 5 & 1536.8 & 12738.2 & -- & 3 & 839.8 & 259242.7 & 0.04 & 5 & \textbf{347.0} & \textbf{499.0} & --\\ 
  200\_15\_20\_DD\_SF & 0 & -- & -- & 0.35 & 5 & 2394.3 & 30438.8 & -- & 2 & 1453.7 & 518522.5 & 0.02 & 5 & \textbf{211.3} & \textbf{1686.2} & --\\ 
  200\_15\_20\_SD\_DF & 0 & -- & -- & 1.00 & 5 & 1859.3 & 5427.8 & -- & 3 & 507.2 & 112870.0 & 0.02 & 5 & \textbf{243.5} & \textbf{1470.8} & --\\ 
  200\_15\_20\_SD\_SF & 0 & -- & -- & 0.55 & 5 & 1952.3 & 52061.8 & -- & 5 & 1133.6 & 228909.8 & -- & 5 & \textbf{522.1} & \textbf{6484.2} & --\\ 
   \hline
  
   Average &  & 465.1 & 110741.6 & 0.40 &  & 467.7 & 7669.3 & 0.09 &  & 311.2 & 93475.1 & 0.03 &  & 121.9 & 1987.3 & -- \\ 
  
  Total & 173 &  &  &  & 237 &  &  &  & 194 &  &  &  & {\bf 240} &  &  &  \\ \hline
\end{tabular}
\end{table}

\begin{table}[H]
\caption{Results obtained by STD, 3LF, STD+ and 3LF+ for balanced instances with $|T|=30$.}\label{tab:balNT30}
\centering
\scriptsize
\begin{tabular}{l|rrrr|rrrr|rrrr|rrrr}
  \hline
  & \multicolumn{4}{c|}{STD} & \multicolumn{4}{c|}{3LF} & \multicolumn{4}{c|}{STD+} & \multicolumn{4}{c}{3LF+} \\ 
instancegroup & opt & time & nodes & gap & opt & time & nodes & gap & opt & time & nodes & gap & opt & time & nodes & gap \\ 
  \hline
  
50\_30\_5\_DD\_DF & 0 & -- & -- & 11.15 & 2 & 3273.6 & 10868.5 & 6.29 & 5 & \textbf{276.1} & 90345.0 & -- & 5 & 428.8 & \textbf{1456.6} & --\\ 
  50\_30\_5\_DD\_SF & 0 & -- & -- & 16.96 & 0 & -- & -- & 9.70 & 3 & 884.3 & 241763.0 & 0.01 & \textbf{5} & 1185.7 & \textbf{7318.2} & --\\ 
  50\_30\_5\_SD\_DF & 0 & -- & -- & 9.27 & 0 & -- & -- & 7.07 & 5 & \textbf{306.6} & 106711.8 & -- & 5 & 1009.1 & \textbf{8735.4} & --\\ 
  50\_30\_5\_SD\_SF & 0 & -- & -- & 14.98 & 0 & -- & -- & 10.44 & 1 & 778.7 & 83831.0 & 0.24 & \textbf{5} & 1537.7 & \textbf{31528.2} & --\\ 
  50\_30\_10\_DD\_DF & 0 & -- & -- & 9.72 & 0 & -- & -- & 5.25 & 5 & \textbf{121.6} & 9546.2 & -- & 5 & 489.8 & \textbf{3162.6} & --\\ 
  50\_30\_10\_DD\_SF & 0 & -- & -- & 11.91 & 0 & -- & -- & 8.12 & 5 & \textbf{150.5} & 28373.8 & -- & 5 & 433.0 & \textbf{1025.2} & --\\ 
  50\_30\_10\_SD\_DF & 0 & -- & -- & 11.96 & 0 & -- & -- & 7.63 & 5 & \textbf{136.3} & 24732.8 & -- & 5 & 325.6 & \textbf{1207.6} & --\\ 
  50\_30\_10\_SD\_SF & 0 & -- & -- & 10.53 & 0 & -- & -- & 8.67 & 5 & \textbf{113.0} & 11270.4 & -- & 5 & 365.3 & \textbf{1901.6} & --\\ 
  50\_30\_15\_DD\_DF & 0 & -- & -- & 9.74 & 0 & -- & -- & 2.07 & 5 & \textbf{111.5} & 11914.8 & -- & 5 & 238.1 & \textbf{540.4} & --\\ 
  50\_30\_15\_DD\_SF & 0 & -- & -- & 8.91 & 0 & -- & -- & 6.78 & 5 & \textbf{106.7} & 21797.0 & -- & 5 & 206.7 & \textbf{224.8} & --\\ 
  50\_30\_15\_SD\_DF & 0 & -- & -- & 8.92 & 0 & -- & -- & 3.93 & 5 & \textbf{117.0} & 13103.8 & -- & 5 & 198.5 & \textbf{467.2} & --\\ 
  50\_30\_15\_SD\_SF & 0 & -- & -- & 8.23 & 0 & -- & -- & 6.24 & 5 & 210.3 & 71703.6 & -- & 5 & \textbf{166.6} & \textbf{3006.8} & --\\ 
  50\_30\_20\_DD\_DF & 0 & -- & -- & 7.95 & 0 & -- & -- & 4.30 & 5 & \textbf{142.8} & 15597.4 & -- & 5 & 206.4 & \textbf{1606.4} & --\\ 
  50\_30\_20\_DD\_SF & 0 & -- & -- & 6.47 & 0 & -- & -- & 5.47 & 5 & \textbf{103.8} & 9378.8 & -- & 5 & 155.8 & \textbf{3072.8} & --\\ 
  50\_30\_20\_SD\_DF & 0 & -- & -- & 7.68 & 0 & -- & -- & 4.59 & 5 & \textbf{122.4} & 11193.2 & -- & 5 & 143.4 & \textbf{299.0} & --\\ 
  50\_30\_20\_SD\_SF & 0 & -- & -- & 4.06 & 0 & -- & -- & 5.51 & 5 & 150.0 & 46191.6 & -- & 5 & \textbf{121.3} & \textbf{487.8} & --\\ 
   \hline
   
100\_30\_5\_DD\_DF & 0 & -- & -- & 25.94 & 0 & -- & -- & 15.30 & 0 & -- & -- & 1.86 & 0 & -- & -- & \textbf{1.58} \\ 
  100\_30\_5\_DD\_SF & 0 & -- & -- & 26.29 & 0 & -- & -- & 17.55 & 0 & -- & -- & 5.14 & 0 & -- & -- & \textbf{2.90} \\ 
  100\_30\_5\_SD\_DF & 0 & -- & -- & 19.12 & 0 & -- & -- & 16.30 & 0 & -- & -- & \textbf{1.42} & 0 & -- & -- & 1.49 \\ 
  100\_30\_5\_SD\_SF & 0 & -- & -- & 26.68 & 0 & -- & -- & 17.68 & 0 & -- & -- & 4.31 & 0 & -- & -- & \textbf{2.51} \\ 
  100\_30\_10\_DD\_DF & 0 & -- & -- & 28.45 & 0 & -- & -- & 15.95 & 0 & -- & -- & \textbf{2.01} & 0 & -- & -- & 2.83 \\ 
  100\_30\_10\_DD\_SF & 0 & -- & -- & 28.35 & 0 & -- & -- & 18.06 & 0 & -- & -- & 3.85 & 0 & -- & -- & \textbf{3.17} \\ 
  100\_30\_10\_SD\_DF & 0 & -- & -- & 25.74 & 0 & -- & -- & 17.79 & 0 & -- & -- & \textbf{2.31} & 0 & -- & -- & 3.70 \\ 
  100\_30\_10\_SD\_SF & 0 & -- & -- & 27.43 & 0 & -- & -- & 18.30 & 0 & -- & -- & \textbf{3.40} & 0 & -- & -- & 5.17 \\ 
  100\_30\_15\_DD\_DF & 0 & -- & -- & 25.32 & 0 & -- & -- & 14.51 & \textbf{2} & 2012.8 & 192681.0 & 1.60 & 0 & -- & -- & 1.96 \\ 
  100\_30\_15\_DD\_SF & 0 & -- & -- & 25.26 & 0 & -- & -- & 16.99 & 0 & -- & -- & \textbf{1.01} & 0 & -- & -- & 1.49 \\ 
  100\_30\_15\_SD\_DF & 0 & -- & -- & 22.61 & 0 & -- & -- & 15.95 & 0 & -- & -- & \textbf{2.26} & 0 & -- & -- & 3.76 \\ 
  100\_30\_15\_SD\_SF & 0 & -- & -- & 22.57 & 0 & -- & -- & 17.06 & 0 & -- & -- & 1.46 & 0 & -- & -- & \textbf{0.88} \\ 
  100\_30\_20\_DD\_DF & 0 & -- & -- & 22.77 & 0 & -- & -- & 13.68 & \textbf{2} & 1965.8 & 253676.5 & 1.84 & 0 & -- & -- & 1.18 \\ 
  100\_30\_20\_DD\_SF & 0 & -- & -- & 24.26 & 0 & -- & -- & 16.78 & 0 & -- & -- & \textbf{0.23} & 0 & -- & -- & 0.47 \\ 
  100\_30\_20\_SD\_DF & 0 & -- & -- & 21.11 & 0 & -- & -- & 13.46 & 0 & -- & -- & 1.16 & 0 & -- & -- & \textbf{0.94} \\ 
  100\_30\_20\_SD\_SF & 0 & -- & -- & 19.42 & 0 & -- & -- & 14.48 & 2 & \textbf{2077.0} & 167045.0 & 0.99 & 2 & 3135.6 & \textbf{1133.0} & 0.60 \\ 
   \hline
   
200\_30\_5\_DD\_DF & 0 & -- & -- & 30.30 & 0 & -- & -- & 25.68 & 0 & -- & -- & \textbf{7.69} & 0 & -- & -- & 16.27 \\ 
  200\_30\_5\_DD\_SF & 0 & -- & -- & 33.46 & 0 & -- & -- & 30.70 & 0 & -- & -- & \textbf{10.65} & 0 & -- & -- & 21.83 \\ 
  200\_30\_5\_SD\_DF & 0 & -- & -- & 28.16 & 0 & -- & -- & 22.27 & 0 & -- & -- & \textbf{8.96} & 0 & -- & -- & 10.19 \\ 
  200\_30\_5\_SD\_SF & 0 & -- & -- & 30.82 & 0 & -- & -- & 23.63 & 0 & -- & -- & \textbf{9.09} & 0 & -- & -- & 14.94 \\ 
  200\_30\_10\_DD\_DF & 0 & -- & -- & 34.76 & 0 & -- & -- & 22.42 & 0 & -- & -- & \textbf{10.94} & 0 & -- & -- & 12.81 \\ 
  200\_30\_10\_DD\_SF & 0 & -- & -- & 34.94 & 0 & -- & -- & 22.73 & 0 & -- & -- & 13.39 & 0 & -- & -- & \textbf{12.75} \\ 
  200\_30\_10\_SD\_DF & 0 & -- & -- & 32.61 & 0 & -- & -- & 25.91 & 0 & -- & -- & \textbf{11.96} & 0 & -- & -- & 12.81 \\ 
  200\_30\_10\_SD\_SF & 0 & -- & -- & 34.11 & 0 & -- & -- & 34.22 & 0 & -- & -- & \textbf{12.02} & 0 & -- & -- & 20.20 \\ 
  200\_30\_15\_DD\_DF & 0 & -- & -- & 34.33 & 0 & -- & -- & 23.61 & 0 & -- & -- & \textbf{10.18} & 0 & -- & -- & 12.13 \\ 
  200\_30\_15\_DD\_SF & 0 & -- & -- & 36.14 & 0 & -- & -- & 24.33 & 0 & -- & -- & \textbf{12.17} & 0 & -- & -- & 13.83 \\ 
  200\_30\_15\_SD\_DF & 0 & -- & -- & 32.98 & 0 & -- & -- & 26.72 & 0 & -- & -- & \textbf{10.62} & 0 & -- & -- & 13.57 \\ 
  200\_30\_15\_SD\_SF & 0 & -- & -- & 33.59 & 0 & -- & -- & 27.08 & 0 & -- & -- & \textbf{10.94} & 0 & -- & -- & 13.88 \\ 
  200\_30\_20\_DD\_DF & 0 & -- & -- & 33.19 & 0 & -- & -- & 23.16 & 0 & -- & -- & \textbf{8.65} & 0 & -- & -- & 10.73 \\ 
  200\_30\_20\_DD\_SF & 0 & -- & -- & 35.01 & 0 & -- & -- & 25.51 & 0 & -- & -- & \textbf{10.29} & 0 & -- & -- & 13.68 \\ 
  200\_30\_20\_SD\_DF & 0 & -- & -- & 31.23 & 0 & -- & -- & 25.88 & 0 & -- & -- & \textbf{8.28} & 0 & -- & -- & 11.21 \\ 
  200\_30\_20\_SD\_SF & 0 & -- & -- & 32.68 & 0 & -- & -- & 26.95 & 0 & -- & -- & \textbf{10.41} & 0 & -- & -- & 13.51 \\ 
   \hline
   
     Average &  & -- & -- & 22.46 &  & 3273.6 & 10868.5 & 16.10 &  & 520.4 & 74255.6 & 5.92 &  & 608.7 & 3951.4 & 8.09 \\ 
     
  Total & 0 &  &  &  & 2 &  &  &  & 80 &  &  &  & {\bf 82} &  &  &  \\ 
   \hline
   
\end{tabular}
\end{table}

\begin{table}[H]
\caption{Results obtained by STD, 3LF, STD+ and 3LF+ for unbalanced instances with $|T|=30$.}\label{tab:unbalNT30}
\centering
\scriptsize
\begin{tabular}{l|rrrr|rrrr|rrrr|rrrr}
  \hline
  
  & \multicolumn{4}{c|}{STD} & \multicolumn{4}{c|}{3LF} & \multicolumn{4}{c|}{STD+} & \multicolumn{4}{c}{3LF+} \\ 
instancegroup & opt & time & nodes & gap & opt & time & nodes & gap & opt & time & nodes & gap & opt & time & nodes & gap \\ 
  \hline
  
50\_30\_5\_DD\_DF & 0 & -- & -- & 0.79 & 5 & 1612.7 & 6601.8 & -- & 5 & 256.9 & 96907.0 & -- & 5 & \textbf{143.6} & 37.8 & --\\ 
  50\_30\_5\_DD\_SF & 0 & -- & -- & 7.35 & 2 & 2868.1 & 26976.5 & 2.27 & 5 & 791.5 & 290772.0 & -- & 5 & \textbf{289.4} & 4263.8 & --\\ 
  50\_30\_5\_SD\_DF & 4 & 2545.6 & 555166.8 & 0.53 & 5 & 1098.4 & 10613.6 & -- & 5 & 374.9 & 201810.0 & -- & 5 & \textbf{241.5} & 1001.2 & --\\ 
  50\_30\_5\_SD\_SF & 0 & -- & -- & 3.31 & 3 & 2440.1 & 37522.7 & 7.19 & 5 & 370.0 & 65685.2 & -- & 5 & \textbf{235.7} & 2036.4 & --\\ 
  50\_30\_10\_DD\_DF & 0 & -- & -- & 7.85 & 2 & 3014.9 & 12211.5 & 4.05 & 5 & 272.7 & 64824.2 & -- & 5 & \textbf{217.2} & 627.0 & --\\ 
  50\_30\_10\_DD\_SF & 0 & -- & -- & 13.12 & 0 & -- & -- & 6.75 & 5 & 883.4 & 208723.2 & -- & 5 & \textbf{314.9} & 1522.8 & --\\ 
  50\_30\_10\_SD\_DF & 0 & -- & -- & 7.57 & 2 & 2454.3 & 19436.0 & 5.02 & 5 & 389.0 & 114201.0 & -- & 5 & \textbf{220.7} & 1394.8 & --\\ 
  50\_30\_10\_SD\_SF & 0 & -- & -- & 10.35 & 0 & -- & -- & 5.62 & 4 & 771.0 & 184995.8 & 0.82 & \textbf{5} & 382.6 & 4983.6 & --\\ 
  50\_30\_15\_DD\_DF & 0 & -- & -- & 10.15 & 0 & -- & -- & 3.83 & 5 & \textbf{138.0} & 22171.4 & -- & 5 & 237.9 & 821.6 & --\\ 
  50\_30\_15\_DD\_SF & 0 & -- & -- & 9.30 & 0 & -- & -- & 7.09 & 5 & 296.3 & 53832.4 & -- & 5 & \textbf{287.2} & 3821.8 & --\\ 
  50\_30\_15\_SD\_DF & 0 & -- & -- & 10.25 & 2 & 2935.3 & 42355.0 & 4.62 & 5 & \textbf{145.3} & 31264.8 & -- & 5 & 200.9 & 795.8 & --\\ 
  50\_30\_15\_SD\_SF & 0 & -- & -- & 7.92 & 0 & -- & -- & 4.64 & 5 & 321.3 & 72995.8 & -- & 5 & \textbf{259.8} & 1517.0 & --\\ 
  50\_30\_20\_DD\_DF & 0 & -- & -- & 8.97 & 0 & -- & -- & 5.07 & 5 & \textbf{112.8} & 10728.0 & -- & 5 & 182.5 & 1943.0 & --\\ 
  50\_30\_20\_DD\_SF & 0 & -- & -- & 6.28 & 0 & -- & -- & 5.57 & 5 & \textbf{113.3} & 24250.0 & -- & 5 & 171.0 & 292.4 & --\\ 
  50\_30\_20\_SD\_DF & 0 & -- & -- & 7.65 & 0 & -- & -- & 3.94 & 5 & \textbf{119.7} & 13094.6 & -- & 5 & 144.7 & 611.6 & --\\ 
  50\_30\_20\_SD\_SF & 0 & -- & -- & 3.72 & 0 & -- & -- & 3.85 & 5 & 141.1 & 25953.2 & -- & 5 & \textbf{103.5} & 480.0 & --\\ 
   \hline
   
100\_30\_5\_DD\_DF & 0 & -- & -- & 15.34 & 0 & -- & -- & 8.11 & 0 & -- & -- & 0.04 & \textbf{5} & 1382.3 & 2476.2 & --\\ 
  100\_30\_5\_DD\_SF & 0 & -- & -- & 22.41 & 0 & -- & -- & 11.82 & 0 & -- & -- & 0.50 & 0 & -- & -- & \textbf{0.27} \\ 
  100\_30\_5\_SD\_DF & 0 & -- & -- & 13.82 & 0 & -- & -- & 10.46 & 0 & -- & -- & 0.25 & \textbf{4} & 2193.1 & 4393.8 & 0.05 \\ 
  100\_30\_5\_SD\_SF & 0 & -- & -- & 19.96 & 0 & -- & -- & 11.18 & 0 & -- & -- & 0.71 & \textbf{2} & 2827.0 & 21253.0 & 0.63 \\ 
  100\_30\_10\_DD\_DF & 0 & -- & -- & 22.80 & 0 & -- & -- & 11.52 & 0 & -- & -- & 0.44 & \textbf{5} & 2228.8 & 2495.6 & --\\ 
  100\_30\_10\_DD\_SF & 0 & -- & -- & 24.68 & 0 & -- & -- & 15.37 & 0 & -- & -- & 3.28 & 0 & -- & -- & \textbf{0.35} \\ 
  100\_30\_10\_SD\_DF & 0 & -- & -- & 23.24 & 0 & -- & -- & 12.74 & 0 & -- & -- & 0.70 & \textbf{2} & 2492.5 & 2952.5 & 0.35 \\ 
  100\_30\_10\_SD\_SF & 0 & -- & -- & 24.65 & 0 & -- & -- & 11.87 & 0 & -- & -- & \textbf{2.15} & 0 & -- & -- & 2.54 \\ 
  100\_30\_15\_DD\_DF & 0 & -- & -- & 21.22 & 0 & -- & -- & 12.92 & 0 & -- & -- & 0.88 & \textbf{1} & 2364.2 & 910.0 & 0.18 \\ 
  100\_30\_15\_DD\_SF & 0 & -- & -- & 24.14 & 0 & -- & -- & 14.87 & 0 & -- & -- & 4.60 & 0 & -- & -- & \textbf{0.54} \\ 
  100\_30\_15\_SD\_DF & 0 & -- & -- & 17.84 & 0 & -- & -- & 11.82 & 0 & -- & -- & 1.63 & \textbf{2} & 2056.6 & 4933.0 & 0.38 \\ 
  100\_30\_15\_SD\_SF & 0 & -- & -- & 22.25 & 0 & -- & -- & 12.42 & 0 & -- & -- & 4.56 & 0 & -- & -- & \textbf{0.82} \\ 
  100\_30\_20\_DD\_DF & 0 & -- & -- & 18.34 & 0 & -- & -- & 12.17 & 0 & -- & -- & 1.73 & \textbf{2} & 2744.5 & 8036.0 & 0.43 \\ 
  100\_30\_20\_DD\_SF & 0 & -- & -- & 20.47 & 0 & -- & -- & 14.64 & 0 & -- & -- & 4.79 & 0 & -- & -- & \textbf{0.34} \\ 
  100\_30\_20\_SD\_DF & 0 & -- & -- & 19.04 & 0 & -- & -- & 13.03 & 0 & -- & -- & 1.19 & \textbf{1} & 2085.7 & 1313.0 & 0.18 \\ 
  100\_30\_20\_SD\_SF & 0 & -- & -- & 20.18 & 0 & -- & -- & 13.40 & 0 & -- & -- & 4.47 & 0 & -- & -- & \textbf{0.58} \\ 
   \hline
   
200\_30\_5\_DD\_DF & 0 & -- & -- & 27.67 & 0 & -- & -- & 18.12 & 0 & -- & -- & \textbf{7.36} & 0 & -- & -- & 10.01 \\ 
  200\_30\_5\_DD\_SF & 0 & -- & -- & 27.42 & 0 & -- & -- & 26.58 & 0 & -- & -- & \textbf{4.58} & 0 & -- & -- & 15.01 \\ 
  200\_30\_5\_SD\_DF & 0 & -- & -- & 27.15 & 0 & -- & -- & 22.12 & 0 & -- & -- & \textbf{7.71} & 0 & -- & -- & 11.79 \\ 
  200\_30\_5\_SD\_SF & 0 & -- & -- & 28.38 & 0 & -- & -- & 24.10 & 0 & -- & -- & \textbf{1.74} & 0 & -- & -- & 6.67 \\ 
  200\_30\_10\_DD\_DF & 0 & -- & -- & 28.79 & 0 & -- & -- & 20.19 & 0 & -- & -- & 8.29 & 0 & -- & -- & \textbf{7.52} \\ 
  200\_30\_10\_DD\_SF & 0 & -- & -- & 32.04 & 0 & -- & -- & 23.73 & 0 & -- & -- & 9.86 & 0 & -- & -- & \textbf{9.44} \\ 
  200\_30\_10\_SD\_DF & 0 & -- & -- & 30.32 & 0 & -- & -- & 23.26 & 0 & -- & -- & \textbf{8.25} & 0 & -- & -- & 10.52 \\ 
  200\_30\_10\_SD\_SF & 0 & -- & -- & 31.41 & 0 & -- & -- & 23.63 & 0 & -- & -- & \textbf{9.34} & 0 & -- & -- & 10.97 \\ 
  200\_30\_15\_DD\_DF & 0 & -- & -- & 28.95 & 0 & -- & -- & 20.86 & 0 & -- & -- & 9.85 & 0 & -- & -- & \textbf{8.66} \\ 
  200\_30\_15\_DD\_SF & 0 & -- & -- & 31.66 & 0 & -- & -- & 23.85 & 0 & -- & -- & 10.79 & 0 & -- & -- & \textbf{10.10} \\ 
  200\_30\_15\_SD\_DF & 0 & -- & -- & 29.29 & 0 & -- & -- & 20.29 & 0 & -- & -- & 9.81 & 0 & -- & -- & \textbf{7.71} \\ 
  200\_30\_15\_SD\_SF & 0 & -- & -- & 30.72 & 0 & -- & -- & 23.22 & 0 & -- & -- & 10.00 & 0 & -- & -- & \textbf{9.46} \\ 
  200\_30\_20\_DD\_DF & 0 & -- & -- & 28.88 & 0 & -- & -- & 18.99 & 0 & -- & -- & 9.53 & 0 & -- & -- & \textbf{7.15} \\ 
  200\_30\_20\_DD\_SF & 0 & -- & -- & 31.52 & 0 & -- & -- & 22.14 & 0 & -- & -- & 11.31 & 0 & -- & -- & \textbf{10.19} \\ 
  200\_30\_20\_SD\_DF & 0 & -- & -- & 28.30 & 0 & -- & -- & 21.95 & 0 & -- & -- & \textbf{9.29} & 0 & -- & -- & 9.39 \\ 
  200\_30\_20\_SD\_SF & 0 & -- & -- & 30.67 & 0 & -- & -- & 22.83 & 0 & -- & -- & 11.10 & 0 & -- & -- & \textbf{9.22} \\ 
   \hline
   
   Average &  & 2545.6 & 555166.8 & 19.14 &  & 2346.3 & 22245.3 & 13.56 &  & 343.6 & 92638.0 & 5.20 &  & 960.3 & 2996.5 & 5.38 \\
   
  Total & 4 &  &  &  & 21 &  &  &  & 79 &  &  &  & {\bf 104} &  &  &  \\ 
   \hline
   
\end{tabular}
\end{table}

\end{landscape}

\subsection{Results analyzing the effectiveness of preprocessing and multi-start randomized bottom-up {dynamic programming-based heuristic}}
\label{sec:speedup}

Tables \ref{tab:mcbalNT15}-\ref{tab:mcunbalNT30} summarize the computational experiments performed to assess the effectiveness of the proposed preprocessing and multi-start randomized bottom-up {dynamic programming-based heuristic}, and how their combination compare against solely using the multi-commodity formulation. Results are displayed for the multi-commodity formulation (MC), the heuristic (DPH), and the combination of the heuristic with the preprocessed multi-commodity formulation (DPH-pMC).

In each of these tables, the first column represents the instance group. Columns 2-6 give, for MC, the average best-encountered solution value (best), the number of instances solved to optimality (opt), the average time in seconds (time), and the average number of nodes (nodes) for the instances solved to optimality as well as the average open gap (gap) for the unsolved instances. Columns 7-10 present, for DPH, the best and average solution values (considering the $it_{max}$ executions), the time in seconds and the deviation from the best known solution value (gap$_{b^*}$), given as $gap_{b^*} = 100\times \frac{best- b^*}{b^*}$, where $b^*$ denotes the best known solution value. Columns 11-15 give, for DPH-pMC, the best-encountered solution value (best), the number of instances solved to optimality (opt), the average time in seconds (time), and the average number of nodes (nodes) for the instances solved to optimality, the average open gap (gap) for the unsolved instances, and the percentual reduction considering the potential variables for reduction (red). This last value is calculated as  $\textrm{red} = 100\times \frac{np}{pot}$, where $np$ is the number of removed variables using preprocessing and $pot$ is the number of potential variables to be removed, i.e., $pot = |R| \sum_{k=1}^{|T|} (|T|-k) = |R| \times \frac{|T| \times (|T|-1)}{2}$.

Table \ref{tab:mcbalNT15} shows the results using MC, DPH and DPH-pMC for the balanced instances with $|T|=15$. All instances were solved to optimality using both MC and DPH-pMC, with the remark that DPH-pMC presents lower average times for all but one instance group. The average number of nodes is very similar for MC and DPH-pMC, with nearly all of them being at most one. DPH achieved solutions with an average gap of 6.4\% in 0.8 seconds on average.

Table \ref{tab:mcunbalNT15} displays the results for unbalanced instances with $|T| = 15$. All instances were solved using MC and DPH-pMC. DPH-pMC presents lower average times and similar averages for the number of nodes when compared to MC. Note that MC reaches lower average times for only 12 instance groups. DPH achieved solutions with an average gap of 4.7\% in 0.8 seconds on average.

Table \ref{tab:mcbalNT30} exhibits the results for balanced instances with $|T| = 30$. All instances were solved using MC and DPH-pMC, with DPH-pMC presenting lower average times. Differently from what happened to the instances with 15 periods, the differences in average times for these instances is very considerable. MC only achieved lower average times for two instance groups. For 14 instance groups, the average times of DPH-pMC were less than half of those of MC. MC and DPH-pMC present similar averages for the number of nodes.
DPH achieved solutions with an average gap of 8.9\% in 1.5 seconds on average.

Table \ref{tab:mcunbalNT30} summarizes the results for unbalanced instances with $|T| = 30$. DPH-pMC solved all the 240 instances to optimality, while 10 of them remained unsolved when using MC. It can be observed that all instances which were not solved by MC have $|R|=200$. Considering the instances which could be solved to optimality by both formulations, DPH-pMC presents lower average times. Note that MC could not finish solving the linear relaxation in one instance of group $200\_30\_10\_SD\_DF$, which explains the 100\% gap. Additionally, MC could not solve any of the instances in group $200\_30\_20\_DD\_SF$ to optimality. The average number of nodes was similar for MC and DPH-pMC. DPH achieved solutions with an average gap of 6.7\% in 1.5 seconds on average.

\begin{landscape}

\begin{table}[H]
\caption{Results obtained by MC and DPH-pMC for balanced instances with $|T|=15$.}\label{tab:mcbalNT15}
\centering
\scriptsize
\begin{tabular}{l|rrrrr|rrrr|rrrrrr}
  \hline
  & \multicolumn{5}{c|}{MC} & \multicolumn{4}{c|}{DPH} & \multicolumn{6}{c}{DPH-pMC} \\ 
instancegroup & best &opt & time & nodes & gap & best & avg & time & gap$_{b^*}$ & best & opt & time & nodes & gap & red \\ 
  \hline
  
50\_15\_5\_DD\_DF & 178781.0 & 5 &{\bf 0.9} & 0.0 & -- & 184493.0 & 185434.3 & 0.5 & 3.2 & 178781.0 & 5 &{\bf 0.9} & 0.0 & -- & 62.1 \\ 
  50\_15\_5\_DD\_SF & 176582.2 & 5 & {\bf 0.9} & 0.0 & -- & 185734.5 & 188182.1 & 0.5 & 5.2 & 176582.2 & 5 & { 1.0} & 0.0 & -- & 56.0 \\ 
  50\_15\_5\_SD\_DF & 177723.0 & 5 & {\bf 0.9} & 0.0 & -- & 184359.6 & 186141.2 & 0.5 & 3.7 & 177723.0 & 5 & { 1.0} & 0.0 & -- & 56.1 \\ 
  50\_15\_5\_SD\_SF & 186936.5 & 5 &{\bf 1.6} & 0.8 & -- & 195966.8 & 199006.7 & 0.5 & 4.9 & 186936.5 & 5 &{\bf 1.6} & 0.8 & -- & 45.3 \\ 
  50\_15\_10\_DD\_DF & 203922.7 & 5 & 1.2 & 0.0 & -- & 209584.1 & 211410.9 & 0.5 & 2.8 & 203922.7 & 5 & {\bf 1.0} & 0.0 & -- & 63.4 \\ 
  50\_15\_10\_DD\_SF & 211618.1 & 5 & 1.3 & 0.0 & -- & 218486.1 & 220861.0 & 0.5 & 3.3 & 211618.1 & 5 & {\bf 1.1} & 0.0 & -- & 56.6 \\ 
  50\_15\_10\_SD\_DF & 204215.4 & 5 & 1.4 & 0.0 & -- & 209873.2 & 211822.2 & 0.5 & 2.8 & 204215.4 & 5 & {\bf 1.0} & 0.0 & -- & 56.4 \\ 
  50\_15\_10\_SD\_SF & 227013.2 & 5 & 1.3 & 0.0 & -- & 233924.8 & 235590.8 & 0.5 & 3.0 & 227013.2 & 5 & {\bf 1.2} & 0.0 & -- & 45.5 \\ 
  50\_15\_15\_DD\_DF & 224337.5 & 5 & 1.2 & 0.0 & -- & 229290.7 & 230559.4 & 0.5 & 2.2 & 224337.5 & 5 & {\bf 0.8} & 0.0 & -- & 62.5 \\ 
  50\_15\_15\_DD\_SF & 229780.6 & 5 & 1.2 & 0.0 & -- & 240579.5 & 242858.4 & 0.5 & 4.7 & 229780.6 & 5 & {\bf 0.9} & 0.0 & -- & 51.7 \\ 
  50\_15\_15\_SD\_DF & 222972.6 & 5 & 1.1 & 0.0 & -- & 228627.7 & 229987.9 & 0.6 & 2.5 & 222972.6 & 5 & {\bf 0.9} & 0.0 & -- & 53.3 \\ 
  50\_15\_15\_SD\_SF & 239593.8 & 5 & 1.2 & 0.0 & -- & 248231.3 & 250586.2 & 0.6 & 3.6 & 239593.8 & 5 & {\bf 0.9} & 0.0 & -- & 46.5 \\ 
  50\_15\_20\_DD\_DF & 247537.1 & 5 & 0.9 & 0.0 & -- & 255933.1 & 257977.4 & 0.6 & 3.4 & 247537.1 & 5 & {\bf 0.8} & 0.0 & -- & 64.1 \\ 
  50\_15\_20\_DD\_SF & 251127.4 & 5 & 1.0 & 0.0 & -- & 261344.8 & 264410.6 & 0.6 & 4.1 & 251127.4 & 5 & {\bf 0.8} & 0.0 & -- & 56.3 \\ 
  50\_15\_20\_SD\_DF & 255120.9 & 5 & 0.9 & 0.0 & -- & 262770.3 & 264394.2 & 0.6 & 3.0 & 255120.9 & 5 & {\bf 0.8} & 0.0 & -- & 61.6 \\ 
  50\_15\_20\_SD\_SF & 258538.2 & 5 & 1.0 & 0.0 & -- & 268121.7 & 271474.9 & 0.6 & 3.7 & 258538.2 & 5 & {\bf 0.8} & 0.0 & -- & 47.1 \\ \hline
  
  100\_15\_5\_DD\_DF & 257450.6 & 5 & 2.1 & 0.2 & -- & 271103.0 & 274256.9 & 0.7 & 5.3 & 257450.6 & 5 & {\bf 1.8} & 0.2 & -- & 63.1 \\ 
  100\_15\_5\_DD\_SF & 258441.1 & 5 & 3.1 & 0.6 & -- & 273882.3 & 279822.2 & 0.7 & 6.0 & 258441.1 & 5 & {\bf 2.6} & 0.6 & -- & 55.0 \\ 
  100\_15\_5\_SD\_DF & 250053.2 & 5 & 2.6 & 0.4 & -- & 265333.5 & 269227.8 & 0.7 & 6.1 & 250053.2 & 5 & {\bf 2.0} & 0.4 & -- & 56.3 \\ 
  100\_15\_5\_SD\_SF & 273377.8 & 5 & 4.6 & 1.0 & -- & 289682.5 & 297755.5 & 0.7 & 6.0 & 273377.8 & 5 & {\bf 3.4} & 1.0 & -- & 47.2 \\ 
  100\_15\_10\_DD\_DF & 287589.0 & 5 & 2.4 & 0.0 & -- & 306051.9 & 309919.3 & 0.7 & 6.4 & 287589.0 & 5 & {\bf 1.9} & 0.0 & -- & 61.8 \\ 
  100\_15\_10\_DD\_SF & 295625.5 & 5 & 2.6 & 0.2 & -- & 320989.6 & 327593.5 & 0.8 & 8.6 & 295625.5 & 5 & {\bf 2.1} & 0.2 & -- & 55.1 \\ 
  100\_15\_10\_SD\_DF & 286684.3 & 5 & 2.2 & 0.0 & -- & 307282.6 & 311062.5 & 0.8 & 7.2 & 286684.3 & 5 & {\bf 1.9} & 0.0 & -- & 56.2 \\ 
  100\_15\_10\_SD\_SF & 305063.7 & 5 & 3.5 & 0.8 & -- & 329739.0 & 338807.2 & 0.8 & 8.1 & 305063.7 & 5 & {\bf 3.3} & 0.8 & -- & 48.0 \\ 
  100\_15\_15\_DD\_DF & 315099.2 & 5 & 3.0 & 0.0 & -- & 340864.6 & 345628.3 & 0.8 & 8.2 & 315099.2 & 5 & {\bf 2.2} & 0.0 & -- & 63.7 \\ 
  100\_15\_15\_DD\_SF & 332588.2 & 5 & 3.3 & 0.0 & -- & 356873.0 & 363993.0 & 0.8 & 7.3 & 332588.2 & 5 & {\bf 2.6} & 0.0 & -- & 54.6 \\ 
  100\_15\_15\_SD\_DF & 311527.6 & 5 & 3.2 & 0.0 & -- & 333838.3 & 337887.6 & 0.8 & 7.2 & 311527.6 & 5 & {\bf 2.5} & 0.0 & -- & 53.5 \\ 
  100\_15\_15\_SD\_SF & 339451.5 & 5 & 4.3 & 0.6 & -- & 366118.4 & 375251.4 & 0.8 & 7.9 & 339451.5 & 5 & {\bf 3.7} & 0.6 & -- & 46.6 \\ 
  100\_15\_20\_DD\_DF & 352544.1 & 5 & 2.8 & 0.2 & -- & 375571.8 & 379842.0 & 0.8 & 6.5 & 352544.1 & 5 & {\bf 2.2} & 0.2 & -- & 61.3 \\ 
  100\_15\_20\_DD\_SF & 364628.0 & 5 & 2.9 & 0.0 & -- & 385364.6 & 392673.2 & 0.8 & 5.7 & 364628.0 & 5 & {\bf 2.6} & 0.0 & -- & 54.9 \\ 
  100\_15\_20\_SD\_DF & 349764.8 & 5 & 2.9 & 0.0 & -- & 369510.7 & 373736.9 & 0.8 & 5.6 & 349764.8 & 5 & {\bf 2.3} & 0.0 & -- & 56.9 \\ 
  100\_15\_20\_SD\_SF & 363580.4 & 5 & 3.2 & 0.0 & -- & 384890.3 & 393956.0 & 0.8 & 5.9 & 363580.4 & 5 & {\bf 2.5} & 0.0 & -- & 48.2 \\ \hline
  
  200\_15\_5\_DD\_DF & 373598.8 & 5 & 37.5 & 1.4 & -- & 399634.4 & 403589.8 & 1.1 & 7.0 & 373598.8 & 5 & {\bf 16.4} & 1.4 & -- & 61.9 \\ 
  200\_15\_5\_DD\_SF & 398678.4 & 5 & {\bf 24.8} & 4.6 & -- & 431384.0 & 439316.8 & 1.2 & 8.2 & 398678.4 & 5 & 26.8 & 4.6 & -- & 53.6 \\ 
  200\_15\_5\_SD\_DF & 378962.9 & 5 & 4.8 & 0.0 & -- & 409214.3 & 415681.9 & 1.2 & 8.0 & 378962.9 & 5 & {\bf 4.3} & 0.0 & -- & 57.8 \\ 
  200\_15\_5\_SD\_SF & 399004.6 & 5 & 30.5 & 7.4 & -- & 432607.9 & 442556.3 & 1.2 & 8.4 & 399004.6 & 5 & {\bf 29.5} & 7.4 & -- & 46.1 \\ 
  200\_15\_10\_DD\_DF & 428573.9 & 5 & 6.4 & 0.2 & -- & 463247.5 & 468906.7 & 1.2 & 8.1 & 428573.9 & 5 & {\bf 5.4} & 0.2 & -- & 61.2 \\ 
  200\_15\_10\_DD\_SF & 437410.8 & 5 & 5.3 & 0.0 & -- & 483698.3 & 493814.7 & 1.2 & 10.6 & 437410.8 & 5 & {\bf 4.6} & 0.0 & -- & 54.4 \\ 
  200\_15\_10\_SD\_DF & 422430.2 & 5 & 4.6 & 0.0 & -- & 460606.6 & 465490.0 & 1.2 & 9.0 & 422430.2 & 5 & {\bf 4.2} & 0.0 & -- & 57.4 \\ 
  200\_15\_10\_SD\_SF & 436691.7 & 5 & 5.7 & 0.0 & -- & 485388.8 & 496283.9 & 1.2 & 11.2 & 436691.7 & 5 & {\bf 5.1} & 0.0 & -- & 46.0 \\ 
  200\_15\_15\_DD\_DF & 468277.5 & 5 & 5.4 & 0.0 & -- & 506805.0 & 513336.9 & 1.2 & 8.2 & 468277.5 & 5 & {\bf 4.8} & 0.0 & -- & 62.6 \\ 
  200\_15\_15\_DD\_SF & 470491.5 & 5 & 5.8 & 0.0 & -- & 513548.7 & 526556.1 & 1.2 & 9.2 & 470491.5 & 5 & {\bf 5.0} & 0.0 & -- & 54.1 \\ 
  200\_15\_15\_SD\_DF & 470951.5 & 5 & 17.5 & 1.0 & -- & 510964.1 & 517099.6 & 1.2 & 8.5 & 470951.5 & 5 & {\bf 13.6} & 1.0 & -- & 57.7 \\ 
  200\_15\_15\_SD\_SF & 477258.7 & 5 & 5.6 & 0.0 & -- & 530345.5 & 542446.8 & 1.2 & 11.1 & 477258.7 & 5 & {\bf 5.1} & 0.0 & -- & 44.1 \\ 
  200\_15\_20\_DD\_DF & 504939.6 & 5 & 6.9 & 0.0 & -- & 542696.9 & 548928.9 & 1.2 & 7.5 & 504939.6 & 5 & {\bf 5.9} & 0.0 & -- & 61.8 \\ 
  200\_15\_20\_DD\_SF & 536448.5 & 5 & 6.8 & 0.0 & -- & 587327.0 & 597830.2 & 1.2 & 9.5 & 536448.5 & 5 & {\bf 6.3} & 0.0 & -- & 54.8 \\ 
  200\_15\_20\_SD\_DF & 508508.5 & 5 & 12.4 & 0.6 & -- & 545859.8 & 551704.5 & 1.2 & 7.3 & 508508.5 & 5 & {\bf 11.1} & 0.6 & -- & 55.8 \\ 
  200\_15\_20\_SD\_SF & 540006.2 & 5 & 22.5 & 1.8 & -- & 591287.8 & 604311.5 & 1.2 & 9.5 & 540006.2 & 5 & {\bf 18.2} & 1.0 & -- & 47.1 \\ \hline
  
  Average &  &  & 5.6 & 0.5 & -- &  &  & 0.8 & 6.4 &  &  & {\bf 4.6} & 0.4 & -- & {55.1} \\ 
  Total &  & 240 &  &  &  &  &  &  &  &  & 240 &  &  &  &  \\ 
   \hline
\end{tabular}
\end{table}

\begin{table}[H]
\caption{Results obtained by MC and DPH-pMC for unbalanced instances with $|T|=15$.}\label{tab:mcunbalNT15}
\centering
\scriptsize
\begin{tabular}{l|rrrrr|rrrr|rrrrrr}
  \hline
  & \multicolumn{5}{c|}{MC} & \multicolumn{4}{c|}{DPH} & \multicolumn{6}{c}{DPH-pMC} \\ 
instancegroup & best &opt & time & nodes & gap & best & avg & time & gap$_{b^*}$ & best & opt & time & nodes & gap & red \\ 
  \hline
50\_15\_5\_DD\_DF & 169377.4 & 5 & {\bf 0.9} & 0.0 & -- & 173687.7 & 174894.4 & 0.5 & 2.6 & 169377.4 & 5 & {\bf 0.9} & 0.0 & -- & 62.1 \\ 
  50\_15\_5\_DD\_SF & 166826.0 & 5 & {\bf 0.9} & 0.0 & -- & 172447.6 & 174763.2 & 0.5 & 3.4 & 166826.0 & 5 & 1.0 & 0.0 & -- & 56.0 \\ 
  50\_15\_5\_SD\_DF & 168797.0 & 5 & {\bf 0.8} & 0.0 & -- & 174562.1 & 175478.0 & 0.5 & 3.4 & 168797.0 & 5 & 0.9 & 0.0 & -- & 56.1 \\ 
  50\_15\_5\_SD\_SF & 175296.4 & 5 & {\bf 2.1} & 1.0 & -- & 181295.5 & 183633.4 & 0.5 & 3.4 & 175296.4 & 5 & 2.2 & 1.0 & -- & 45.3 \\ 
  50\_15\_10\_DD\_DF & 196472.0 & 5 & {\bf 0.9} & 0.0 & -- & 203630.9 & 205147.5 & 0.5 & 3.6 & 196472.0 & 5 &{\bf 0.9} & 0.0 & -- & 63.4 \\ 
  50\_15\_10\_DD\_SF & 199116.1 & 5 & {\bf 1.0} & 0.2 & -- & 204951.1 & 207234.2 & 0.5 & 2.9 & 199116.1 & 5 & 1.1 & 0.2 & -- & 56.6 \\ 
  50\_15\_10\_SD\_DF & 195540.5 & 5 & {\bf 0.8} & 0.0 & -- & 201959.3 & 202976.6 & 0.5 & 3.3 & 195540.5 & 5 & 0.9 & 0.0 & -- & 56.4 \\ 
  50\_15\_10\_SD\_SF & 213232.5 & 5 & {\bf 1.4} & 0.8 & -- & 220455.8 & 222335.0 & 0.6 & 3.3 & 213232.5 & 5 & 1.9 & 0.8 & -- & 45.5 \\ 
  50\_15\_15\_DD\_DF & 217119.7 & 5 & 0.9 & 0.0 & -- & 225328.5 & 227096.1 & 0.6 & 3.8 & 217119.7 & 5 & {\bf 0.8} & 0.0 & -- & 62.5 \\ 
  50\_15\_15\_DD\_SF & 222164.8 & 5 & 1.0 & 0.2 & -- & 228565.8 & 231042.9 & 0.6 & 2.9 & 222164.8 & 5 & {\bf 0.8} & 0.2 & -- & 51.7 \\ 
  50\_15\_15\_SD\_DF & 217499.8 & 5 & 0.9 & 0.0 & -- & 224194.1 & 225708.4 & 0.6 & 3.1 & 217499.8 & 5 & {\bf 0.8} & 0.0 & -- & 53.3 \\ 
  50\_15\_15\_SD\_SF & 232679.2 & 5 & 1.1 & 0.2 & -- & 240566.0 & 243253.1 & 0.6 & 3.4 & 232679.2 & 5 & {\bf 0.9} & 0.2 & -- & 46.5 \\ 
  50\_15\_20\_DD\_DF & 245279.1 & 5 & 0.9 & 0.0 & -- & 255114.2 & 257156.8 & 0.6 & 4.0 & 245279.1 & 5 & {\bf 0.8} & 0.0 & -- & 64.1 \\ 
  50\_15\_20\_DD\_SF & 249786.3 & 5 & 1.0 & 0.0 & -- & 257564.8 & 260053.6 & 0.6 & 3.1 & 249786.3 & 5 & {\bf 0.8} & 0.0 & -- & 56.3 \\ 
  50\_15\_20\_SD\_DF & 253839.5 & 5 & 1.0 & 0.0 & -- & 261452.6 & 263384.7 & 0.6 & 3.0 & 253839.5 & 5 & {\bf 0.8} & 0.0 & -- & 61.6 \\ 
  50\_15\_20\_SD\_SF & 258041.0 & 5 & 1.0 & 0.0 & -- & 267538.1 & 270195.3 & 0.6 & 3.7 & 258041.0 & 5 & {\bf 0.9} & 0.0 & -- & 47.1 \\ \hline
  
  100\_15\_5\_DD\_DF & 243090.8 & 5 & 2.1 & 0.0 & -- & 252872.9 & 254735.8 & 0.7 & 4.0 & 243090.8 & 5 & {\bf 1.8} & 0.0 & -- & 63.1 \\ 
  100\_15\_5\_DD\_SF & 242367.0 & 5 & 5.3 & 1.0 & -- & 253282.0 & 257306.2 & 0.7 & 4.5 & 242367.0 & 5 & {\bf 5.0} & 1.0 & -- & 55.0 \\ 
  100\_15\_5\_SD\_DF & 234854.7 & 5 & 2.1 & 0.2 & -- & 248561.9 & 250284.6 & 0.8 & 5.8 & 234854.7 & 5 & {\bf 1.9} & 0.2 & -- & 56.3 \\ 
  100\_15\_5\_SD\_SF & 256258.7 & 5 & 5.3 & 1.0 & -- & 268781.5 & 272816.6 & 0.8 & 4.9 & 256258.7 & 5 & {\bf 5.3} & 1.0 & -- & 47.2 \\ 
  100\_15\_10\_DD\_DF & 271470.0 & 5 & 2.2 & 0.2 & -- & 283902.7 & 288124.1 & 0.7 & 4.6 & 271470.0 & 5 & {\bf 1.9} & 0.2 & -- & 61.8 \\ 
  100\_15\_10\_DD\_SF & 280634.8 & 5 & 2.6 & 0.2 & -- & 294801.4 & 299741.9 & 0.8 & 5.1 & 280634.8 & 5 & {\bf 2.2} & 0.2 & -- & 55.1 \\ 
  100\_15\_10\_SD\_DF & 270399.5 & 5 & 2.0 & 0.0 & -- & 284104.0 & 287013.8 & 0.8 & 5.1 & 270399.5 & 5 & {\bf 1.8} & 0.0 & -- & 56.2 \\ 
  100\_15\_10\_SD\_SF & 289608.0 & 5 & 5.0 & 1.0 & -- & 304635.4 & 310745.9 & 0.8 & 5.2 & 289608.0 & 5 & {\bf 3.2} & 1.0 & -- & 48.0 \\ 
  100\_15\_15\_DD\_DF & 298589.2 & 5 & 2.0 & 0.0 & -- & 311715.4 & 315344.5 & 0.8 & 4.4 & 298589.2 & 5 & {\bf 1.7} & 0.0 & -- & 63.7 \\ 
  100\_15\_15\_DD\_SF & 317730.3 & 5 & 2.2 & 0.0 & -- & 331021.8 & 336841.0 & 0.8 & 4.2 & 317730.3 & 5 & {\bf 2.2} & 0.0 & -- & 54.6 \\ 
  100\_15\_15\_SD\_DF & 293950.1 & 5 & 2.0 & 0.0 & -- & 308703.3 & 312153.8 & 0.8 & 5.0 & 293950.1 & 5 & {\bf 1.8} & 0.0 & -- & 53.5 \\ 
  100\_15\_15\_SD\_SF & 321460.9 & 5 & 4.3 & 1.0 & -- & 336355.5 & 342644.2 & 0.8 & 4.6 & 321460.9 & 5 & {\bf 3.2} & 1.0 & -- & 46.6 \\ 
  100\_15\_20\_DD\_DF & 341631.2 & 5 & 2.0 & 0.0 & -- & 357838.6 & 361735.8 & 0.8 & 4.7 & 341631.2 & 5 & {\bf 1.7} & 0.0 & -- & 61.3 \\ 
  100\_15\_20\_DD\_SF & 356235.2 & 5 & 3.2 & 0.6 & -- & 372378.9 & 377566.4 & 0.8 & 4.5 & 356235.2 & 5 & {\bf 2.8} & 0.6 & -- & 54.9 \\ 
  100\_15\_20\_SD\_DF & 336504.2 & 5 & 2.1 & 0.0 & -- & 351676.0 & 355794.1 & 0.8 & 4.5 & 336504.2 & 5 & {\bf 1.8} & 0.0 & -- & 56.9 \\ 
  100\_15\_20\_SD\_SF & 348827.7 & 5 & 4.0 & 1.0 & -- & 363300.8 & 370264.8 & 0.8 & 4.2 & 348827.7 & 5 & {\bf 3.3} & 1.0 & -- & 48.2 \\ \hline
  
  200\_15\_5\_DD\_DF & 355263.6 & 5 & 5.1 & 0.0 & -- & 375261.2 & 378990.9 & 1.2 & 5.6 & 355263.6 & 5 & {\bf 4.9} & 0.0 & -- & 61.9 \\ 
  200\_15\_5\_DD\_SF & 378131.2 & 5 & {\bf 16.2} & 2.2 & -- & 398685.2 & 404177.8 & 1.2 & 5.4 & 378131.2 & 5 & 17.9 & 2.2 & -- & 53.6 \\ 
  200\_15\_5\_SD\_DF & 360583.7 & 5 & {\bf 4.5} & 0.0 & -- & 383390.7 & 388747.3 & 1.2 & 6.3 & 360583.7 & 5 & 4.6 & 0.0 & -- & 57.8 \\ 
  200\_15\_5\_SD\_SF & 377067.9 & 5 & 18.7 & 2.0 & -- & 401018.3 & 407509.5 & 1.2 & 6.4 & 377067.9 & 5 & {\bf 18.0} & 1.6 & -- & 46.1 \\ 
  200\_15\_10\_DD\_DF & 403827.7 & 5 & 4.9 & 0.0 & -- & 426410.3 & 429850.4 & 1.2 & 5.6 & 403827.7 & 5 & {\bf 4.3} & 0.0 & -- & 61.2 \\ 
  200\_15\_10\_DD\_SF & 416943.7 & 5 & {\bf 17.3} & 4.0 & -- & 440795.2 & 446803.4 & 1.2 & 5.7 & 416943.7 & 5 & 20.9 & 4.0 & -- & 54.4 \\ 
  200\_15\_10\_SD\_DF & 398810.8 & 5 & 24.8 & 1.2 & -- & 423256.3 & 426822.5 & 1.2 & 6.1 & 398810.8 & 5 & {\bf 13.5} & 1.2 & -- & 57.4 \\ 
  200\_15\_10\_SD\_SF & 414113.2 & 5 & {\bf 30.1} & 11.0 & -- & 439086.7 & 445643.7 & 1.2 & 6.0 & 414113.2 & 5 & 30.6 & 18.0 & -- & 46.0 \\ 
  200\_15\_15\_DD\_DF & 436027.1 & 5 & 7.2 & 0.2 & -- & 460854.2 & 465466.3 & 1.2 & 5.7 & 436027.1 & 5 & {\bf 6.1} & 0.2 & -- & 62.6 \\ 
  200\_15\_15\_DD\_SF & 443242.5 & 5 & {\bf 20.6} & 3.4 & -- & 469023.2 & 475648.5 & 1.2 & 5.8 & 443242.5 & 5 & 23.4 & 3.4 & -- & 54.1 \\ 
  200\_15\_15\_SD\_DF & 436966.3 & 5 & 6.9 & 0.4 & -- & 464004.1 & 469006.6 & 1.2 & 6.2 & 436966.3 & 5 & {\bf 5.2} & 0.4 & -- & 57.7 \\ 
  200\_15\_15\_SD\_SF & 449052.5 & 5 & 25.1 & 16.6 & -- & 480337.4 & 487077.1 & 1.2 & 7.0 & 449052.5 & 5 & {\bf 21.2} & 7.4 & -- & 44.1 \\ 
  200\_15\_20\_DD\_DF & 473455.7 & 5 & {\bf 15.1} & 0.2 & -- & 503505.2 & 508702.9 & 1.2 & 6.3 & 473455.7 & 5 & 20.7 & 0.8 & -- & 61.8 \\ 
  200\_15\_20\_DD\_SF & 507914.0 & 5 & 11.3 & 1.0 & -- & 539278.4 & 548248.4 & 1.2 & 6.2 & 507914.0 & 5 & {\bf 9.7} & 0.6 & -- & 54.8 \\ 
  200\_15\_20\_SD\_DF & 479440.7 & 5 & 26.9 & 1.8 & -- & 510544.4 & 516478.1 & 1.2 & 6.5 & 479440.7 & 5 & {\bf 18.4} & 1.4 & -- & 55.8 \\ 
  200\_15\_20\_SD\_SF & 506277.9 & 5 & 24.0 & 9.0 & -- & 540235.5 & 549489.1 & 1.2 & 6.7 & 506277.9 & 5 & {\bf 18.8} & 2.6 & -- & 47.1 \\ \hline
  
  Average &  &  & 6.7 & 1.3 & -- &  &  & 0.8 & 4.7 &  &  & {\bf 6.2} & 1.1 & -- & {55.1}  \\ 
  Total &  & 240 &  &  &  &  &  &  &  &  & 240 &  &  &  &  \\ 
   \hline
\end{tabular}
\end{table}

\begin{table}[H]
\caption{Results obtained by MC and DPH-pMC for balanced instances with $|T|=30$.}\label{tab:mcbalNT30}
\centering
\scriptsize
\begin{tabular}{l|rrrrr|rrrr|rrrrrr}
  \hline
  & \multicolumn{5}{c|}{MC} & \multicolumn{4}{c|}{DPH} & \multicolumn{6}{c}{DPH-pMC} \\ 
instancegroup & best &opt & time & nodes & gap & best & avg & time & gap$_{b^*}$ & best & opt & time & nodes & gap & red \\ 
  \hline
  
50\_30\_5\_DD\_DF & 338701.3 & 5 & 7.7 & 0.2 & -- & 362718.4 & 368521.6 & 0.8 & 7.1 & 338701.3 & 5 & {\bf 4.2} & 0.2 & -- & 76.3 \\ 
  50\_30\_5\_DD\_SF & 369959.3 & 5 & 77.9 & 3.8 & -- & 390501.8 & 399648.0 & 0.8 & 5.6 & 369959.3 & 5 & {\bf 60.4} & 5.8 & -- & 72.7 \\ 
  50\_30\_5\_SD\_DF & 339759.9 & 5 & 12.8 & 0.6 & -- & 360490.3 & 366564.0 & 0.8 & 6.1 & 339759.9 & 5 & {\bf 6.8} & 0.6 & -- & 74.9 \\ 
  50\_30\_5\_SD\_SF & 375954.2 & 5 & {\bf 83.1} & 18.0 & -- & 395406.3 & 407832.0 & 0.8 & 5.2 & 375954.2 & 5 & 85.6 & 28.8 & -- & 61.3 \\ 
  50\_30\_10\_DD\_DF & 392039.9 & 5 & 13.8 & 0.0 & -- & 418206.7 & 424044.4 & 0.8 & 6.7 & 392039.9 & 5 & {\bf 6.4} & 0.0 & -- & 76.5 \\ 
  50\_30\_10\_DD\_SF & 407797.4 & 5 & 14.8 & 0.0 & -- & 442710.1 & 453062.3 & 0.8 & 8.6 & 407797.4 & 5 & {\bf 6.8} & 0.0 & -- & 72.1 \\ 
  50\_30\_10\_SD\_DF & 393979.1 & 5 & 14.0 & 0.0 & -- & 421496.3 & 427143.8 & 0.8 & 7.0 & 393979.1 & 5 & {\bf 6.0} & 0.0 & -- & 72.1 \\ 
  50\_30\_10\_SD\_SF & 429701.0 & 5 & 27.4 & 0.6 & -- & 464157.9 & 476830.0 & 0.9 & 8.0 & 429701.0 & 5 & {\bf 15.2} & 0.6 & -- & 62.1 \\ 
  50\_30\_15\_DD\_DF & 435577.6 & 5 & 13.0 & 0.2 & -- & 465661.4 & 472284.2 & 0.9 & 6.9 & 435577.6 & 5 & {\bf 7.3} & 0.2 & -- & 76.2 \\ 
  50\_30\_15\_DD\_SF & 453096.7 & 5 & 11.6 & 0.0 & -- & 489517.7 & 500375.5 & 0.9 & 8.0 & 453096.7 & 5 & {\bf 6.5} & 0.0 & -- & 73.4 \\ 
  50\_30\_15\_SD\_DF & 432574.3 & 5 & 15.0 & 0.4 & -- & 465765.4 & 473012.1 & 0.9 & 7.7 & 432574.3 & 5 & {\bf 7.9} & 0.4 & -- & 72.8 \\ 
  50\_30\_15\_SD\_SF & 455741.0 & 5 & 11.7 & 0.0 & -- & 494557.6 & 506917.0 & 0.9 & 8.5 & 455741.0 & 5 & {\bf 7.3} & 0.0 & -- & 60.0 \\ 
  50\_30\_20\_DD\_DF & 472504.6 & 5 & 9.2 & 0.0 & -- & 504575.3 & 510688.4 & 0.9 & 6.8 & 472504.6 & 5 & {\bf 6.0} & 0.0 & -- & 76.3 \\ 
  50\_30\_20\_DD\_SF & 503023.8 & 5 & 10.1 & 0.0 & -- & 539547.1 & 550921.3 & 0.9 & 7.2 & 503023.8 & 5 & {\bf 6.1} & 0.0 & -- & 74.2 \\ 
  50\_30\_20\_SD\_DF & 475292.7 & 5 & 8.8 & 0.0 & -- & 508916.8 & 515947.2 & 0.9 & 7.1 & 475292.7 & 5 & {\bf 5.8} & 0.0 & -- & 74.4 \\ 
  50\_30\_20\_SD\_SF & 504533.7 & 5 & 13.1 & 0.2 & -- & 546303.0 & 557853.1 & 0.9 & 8.3 & 504533.7 & 5 & {\bf 8.6} & 0.2 & -- & 59.8 \\ \hline
  
  100\_30\_5\_DD\_DF & 498386.9 & 5 & 81.9 & 1.8 & -- & 537965.4 & 546695.9 & 1.3 & 7.9 & 498386.9 & 5 & {\bf 58.0} & 1.8 & -- & 76.5 \\ 
  100\_30\_5\_DD\_SF & 526280.4 & 5 & 148.0 & 18.2 & -- & 565161.1 & 576626.1 & 1.3 & 7.4 & 526280.4 & 5 & {\bf 107.8} & 17.6 & -- & 71.9 \\ 
  100\_30\_5\_SD\_DF & 501113.4 & 5 & 108.2 & 2.6 & -- & 543217.0 & 550657.1 & 1.3 & 8.4 & 501113.4 & 5 & {\bf 79.4} & 2.0 & -- & 73.2 \\ 
  100\_30\_5\_SD\_SF & 526452.2 & 5 & {\bf 144.4} & 20.6 & -- & 562327.7 & 578890.4 & 1.3 & 6.8 & 526452.2 & 5 & 170.0 & 48.6 & -- & 63.1 \\ 
  100\_30\_10\_DD\_DF & 573540.1 & 5 & 21.6 & 0.0 & -- & 623550.4 & 630947.7 & 1.3 & 8.7 & 573540.1 & 5 & {\bf 12.1} & 0.0 & -- & 77.3 \\ 
  100\_30\_10\_DD\_SF & 614187.4 & 5 & 68.4 & 4.6 & -- & 676769.3 & 691218.3 & 1.3 & 10.2 & 614187.4 & 5 & {\bf 48.1} & 2.2 & -- & 71.7 \\ 
  100\_30\_10\_SD\_DF & 565162.5 & 5 & 157.3 & 2.2 & -- & 616561.8 & 625727.2 & 1.3 & 9.1 & 565162.5 & 5 & {\bf 103.9} & 2.6 & -- & 70.9 \\ 
  100\_30\_10\_SD\_SF & 605602.9 & 5 & 96.7 & 12.0 & -- & 660288.3 & 682739.4 & 1.3 & 9.0 & 605602.9 & 5 & {\bf 71.3} & 10.8 & -- & 61.9 \\ 
  100\_30\_15\_DD\_DF & 630630.6 & 5 & 50.5 & 0.0 & -- & 685246.3 & 695455.3 & 1.3 & 8.7 & 630630.6 & 5 & {\bf 13.1} & 0.0 & -- & 80.0 \\ 
  100\_30\_15\_DD\_SF & 665812.1 & 5 & 103.0 & 0.2 & -- & 736500.9 & 751381.5 & 1.4 & 10.6 & 665812.1 & 5 & {\bf 28.7} & 0.2 & -- & 72.1 \\ 
  100\_30\_15\_SD\_DF & 621037.8 & 5 & 194.3 & 1.0 & -- & 675491.6 & 683373.1 & 1.4 & 8.8 & 621037.8 & 5 & {\bf 43.0} & 1.0 & -- & 71.8 \\ 
  100\_30\_15\_SD\_SF & 651822.5 & 5 & 122.3 & 29.4 & -- & 711095.7 & 738337.6 & 1.4 & 9.1 & 651822.5 & 5 & {\bf 79.6} & 21.8 & -- & 62.5 \\ 
  100\_30\_20\_DD\_DF & 676555.6 & 5 & 36.4 & 0.0 & -- & 736012.0 & 744644.7 & 1.4 & 8.8 & 676555.6 & 5 & {\bf 18.5} & 0.0 & -- & 76.9 \\ 
  100\_30\_20\_DD\_SF & 710104.4 & 5 & 39.6 & 0.0 & -- & 784867.0 & 799465.2 & 1.4 & 10.5 & 710104.4 & 5 & {\bf 22.2} & 0.0 & -- & 71.0 \\ 
  100\_30\_20\_SD\_DF & 683630.2 & 5 & 35.7 & 0.0 & -- & 743074.0 & 753442.6 & 1.4 & 8.7 & 683630.2 & 5 & {\bf 18.9} & 0.0 & -- & 73.2 \\ 
  100\_30\_20\_SD\_SF & 711220.4 & 5 & 90.5 & 10.2 & -- & 781146.0 & 805555.2 & 1.4 & 9.8 & 711220.4 & 5 & {\bf 59.2} & 3.4 & -- & 60.8 \\ \hline
  
  200\_30\_5\_DD\_DF & 741971.6 & 5 & 83.8 & 3.0 & -- & 810557.9 & 820845.6 & 2.3 & 9.2 & 741971.6 & 5 & {\bf 81.8} & 4.6 & -- & 78.3 \\ 
  200\_30\_5\_DD\_SF & 756470.3 & 5 & 1265.6 & 77.0 & -- & 825926.9 & 841833.7 & 2.3 & 9.2 & 756470.3 & 5 & {\bf 977.6} & 61.2 & -- & 71.3 \\ 
  200\_30\_5\_SD\_DF & 741675.3 & 5 & 89.8 & 4.2 & -- & 816112.5 & 825673.3 & 2.3 & 10.0 & 741675.3 & 5 & {\bf 77.1} & 4.2 & -- & 73.3 \\ 
  200\_30\_5\_SD\_SF & 780264.2 & 5 & 1451.0 & 65.4 & -- & 840709.5 & 865829.7 & 2.3 & 7.8 & 780264.2 & 5 & {\bf 1303.9} & 68.4 & -- & 60.9 \\ 
  200\_30\_10\_DD\_DF & 847135.9 & 5 & 103.8 & 4.4 & -- & 938434.1 & 949781.2 & 2.3 & 10.8 & 847135.9 & 5 & {\bf 80.1} & 4.4 & -- & 77.0 \\ 
  200\_30\_10\_DD\_SF & 894079.0 & 5 & 524.1 & 7.6 & -- & 1002234.7 & 1021132.3 & 2.3 & 12.1 & 894079.0 & 5 & {\bf 446.5} & 7.6 & -- & 71.8 \\ 
  200\_30\_10\_SD\_DF & 851998.3 & 5 & 133.8 & 4.4 & -- & 942194.5 & 953674.9 & 2.3 & 10.6 & 851998.3 & 5 & {\bf 116.1} & 4.8 & -- & 74.2 \\ 
  200\_30\_10\_SD\_SF & 894647.4 & 5 & 514.9 & 7.2 & -- & 991177.9 & 1022939.4 & 2.3 & 10.8 & 894647.4 & 5 & {\bf 289.5} & 6.8 & -- & 63.0 \\ 
  200\_30\_15\_DD\_DF & 926834.5 & 5 & 338.1 & 1.2 & -- & 1029138.0 & 1040510.7 & 2.3 & 11.0 & 926834.5 & 5 & {\bf 50.6} & 0.8 & -- & 77.3 \\ 
  200\_30\_15\_DD\_SF & 947725.1 & 5 & 459.9 & 1.2 & -- & 1071044.8 & 1092251.2 & 2.3 & 13.0 & 947725.1 & 5 & {\bf 103.2} & 1.2 & -- & 69.6 \\ 
  200\_30\_15\_SD\_DF & 940214.5 & 5 & 351.7 & 2.2 & -- & 1045404.0 & 1056371.8 & 2.3 & 11.2 & 940214.5 & 5 & {\bf 123.4} & 2.6 & -- & 73.6 \\ 
  200\_30\_15\_SD\_SF & 977290.3 & 5 & 262.0 & 0.0 & -- & 1090713.2 & 1129582.0 & 2.3 & 11.6 & 977290.3 & 5 & {\bf 40.5} & 0.0 & -- & 60.7 \\ 
  200\_30\_20\_DD\_DF & 988005.4 & 5 & 184.5 & 0.6 & -- & 1095861.9 & 1108400.4 & 2.3 & 10.9 & 988005.4 & 5 & {\bf 76.2} & 0.6 & -- & 77.7 \\ 
  200\_30\_20\_DD\_SF & 1030026.3 & 5 & 189.6 & 0.0 & -- & 1161879.3 & 1185745.1 & 2.4 & 12.8 & 1030026.3 & 5 & {\bf 78.5} & 0.0 & -- & 71.6 \\ 
  200\_30\_20\_SD\_DF & 979052.6 & 5 & 233.8 & 0.6 & -- & 1089293.3 & 1101685.0 & 2.3 & 11.3 & 979052.7 & 5 & {\bf 114.8} & 0.6 & -- & 73.7 \\ 
  200\_30\_20\_SD\_SF & 1039374.6 & 5 & 166.2 & 0.0 & -- & 1130762.2 & 1197921.5 & 2.4 & 8.8 & 1039374.6 & 5 & {\bf 79.0} & 0.0 & -- & 63.1 \\ \hline
  
  Average &  &  & 170.7 & 6.4 & -- &  &  & 1.5 & 8.9 &  &  & {\bf 108.7} & 6.6 & -- & {71.0} \\ 
  Total &  & 240 &  &  &  &  &  &  &  &  & 240 &  &  &  &  \\ 
   \hline
\end{tabular}
\end{table}

\begin{table}[H]
\caption{Results obtained by MC and DPH-pMC for unbalanced instances with ${|T|}=30$.}\label{tab:mcunbalNT30}
\centering
\scriptsize
\begin{tabular}{l|rrrrr|rrrr|rrrrrr}
  \hline
  & \multicolumn{5}{c|}{MC} & \multicolumn{4}{c|}{DPH} & \multicolumn{6}{c}{DPH-pMC} \\ 
instancegroup & best &opt & time & nodes & gap & best & avg & time & gap$_{b^*}$ & best & opt & time & nodes & gap & red \\ 
  \hline
  
50\_30\_5\_DD\_DF & 318427.9 & 5 & 6.4 & 0.0 & -- & 336483.7 & 339823.4 & 0.8 & 5.7 & 318427.9 & 5 & {\bf 4.3} & 0.0 & -- & 76.3 \\ 
  50\_30\_5\_DD\_SF & 345570.3 & 5 & 61.7 & 1.6 & -- & 360913.8 & 367721.7 & 0.8 & 4.5 & 345570.3 & 5 & {\bf 14.5} & 0.8 & -- & 72.7 \\ 
  50\_30\_5\_SD\_DF & 316360.1 & 5 & 9.4 & 0.2 & -- & 333803.2 & 339674.2 & 0.8 & 5.5 & 316360.1 & 5 & {\bf 5.2} & 0.2 & -- & 74.9 \\ 
  50\_30\_5\_SD\_SF & 352094.6 & 5 & 104.0 & 25.2 & -- & 371961.8 & 379128.5 & 0.8 & 5.7 & 352094.6 & 5 & {\bf 22.7} & 11.4 & -- & 61.3 \\ 
  50\_30\_10\_DD\_DF & 373732.7 & 5 & 12.1 & 0.2 & -- & 394931.6 & 400218.5 & 0.9 & 5.7 & 373732.7 & 5 & {\bf 5.8} & 0.2 & -- & 76.5 \\ 
  50\_30\_10\_DD\_SF & 391861.6 & 5 & 24.5 & 1.2 & -- & 415140.1 & 422714.6 & 0.9 & 5.9 & 391861.6 & 5 & {\bf 8.6} & 0.4 & -- & 72.1 \\ 
  50\_30\_10\_SD\_DF & 375957.6 & 5 & 6.4 & 0.0 & -- & 400071.0 & 404819.4 & 0.9 & 6.4 & 375957.6 & 5 & {\bf 4.6} & 0.0 & -- & 72.1 \\ 
  50\_30\_10\_SD\_SF & 414796.8 & 5 & {\bf 22.5} & 10.8 & -- & 437595.9 & 448016.2 & 0.9 & 5.5 & 414796.8 & 5 & 51.2 & 9.4 & -- & 62.1 \\ 
  50\_30\_15\_DD\_DF & 426065.9 & 5 & 11.1 & 0.4 & -- & 451045.5 & 458720.4 & 0.9 & 5.9 & 426065.9 & 5 & {\bf 7.1} & 0.4 & -- & 76.2 \\ 
  50\_30\_15\_DD\_SF & 449340.1 & 5 & 13.0 & 0.4 & -- & 478382.1 & 487911.0 & 0.9 & 6.5 & 449340.1 & 5 & {\bf 7.9} & 0.4 & -- & 73.4 \\ 
  50\_30\_15\_SD\_DF & 422631.3 & 5 & 9.7 & 0.2 & -- & 451760.1 & 458520.5 & 0.9 & 6.9 & 422631.3 & 5 & {\bf 6.7} & 0.2 & -- & 72.8 \\ 
  50\_30\_15\_SD\_SF & 450270.2 & 5 & 15.1 & 0.4 & -- & 481762.4 & 492309.9 & 0.9 & 7.0 & 450270.2 & 5 & {\bf 9.0} & 0.4 & -- & 60.0 \\ 
  50\_30\_20\_DD\_DF & 470573.6 & 5 & 10.2 & 0.2 & -- & 499848.2 & 506893.4 & 0.9 & 6.2 & 470573.6 & 5 & {\bf 5.6} & 0.2 & -- & 76.3 \\ 
  50\_30\_20\_DD\_SF & 500931.9 & 5 & 11.5 & 0.2 & -- & 536977.6 & 547682.1 & 0.9 & 7.2 & 500931.9 & 5 & {\bf 7.5} & 0.2 & -- & 74.2 \\ 
  50\_30\_20\_SD\_DF & 474358.5 & 5 & 8.6 & 0.0 & -- & 506004.1 & 513251.8 & 0.9 & 6.7 & 474358.5 & 5 & {\bf 5.6} & 0.0 & -- & 74.4 \\ 
  50\_30\_20\_SD\_SF & 501618.9 & 5 & 9.4 & 0.0 & -- & 541032.7 & 553791.3 & 0.9 & 7.8 & 501618.9 & 5 & {\bf 6.7} & 0.0 & -- & 59.8 \\ \hline
  
  100\_30\_5\_DD\_DF & 468289.8 & 5 & 836.2 & 0.0 & -- & 498460.1 & 506998.6 & 1.3 & 6.4 & 468289.8 & 5 & {\bf 10.9} & 0.0 & -- & 76.5 \\ 
  100\_30\_5\_DD\_SF & 495677.2 & 5 & 680.2 & 10.6 & -- & 523243.4 & 531805.5 & 1.3 & 5.6 & 495677.2 & 5 & {\bf 57.9} & 11.0 & -- & 71.9 \\ 
  100\_30\_5\_SD\_DF & 474804.8 & 5 & 1324.3 & 0.4 & -- & 507785.0 & 514767.0 & 1.3 & 6.9 & 474804.8 & 5 & {\bf 36.0} & 0.4 & -- & 73.2 \\ 
  100\_30\_5\_SD\_SF & 495174.5 & 5 & 1373.6 & 100.6 & -- & 519246.8 & 529159.4 & 1.3 & 4.8 & 495174.5 & 5 & {\bf 86.0} & 56.6 & -- & 63.1 \\ 
  100\_30\_10\_DD\_DF & 537879.5 & 5 & 17.6 & 0.0 & -- & 574666.3 & 581932.3 & 1.3 & 6.8 & 537879.5 & 5 & {\bf 10.7} & 0.0 & -- & 77.3 \\ 
  100\_30\_10\_DD\_SF & 585880.3 & 5 & 85.5 & 6.4 & -- & 624431.6 & 635490.7 & 1.3 & 6.6 & 585880.3 & 5 & {\bf 57.9} & 5.0 & -- & 71.7 \\ 
  100\_30\_10\_SD\_DF & 529729.1 & 5 & 70.0 & 2.2 & -- & 565166.5 & 573255.9 & 1.3 & 6.7 & 529729.1 & 5 & {\bf 46.6} & 1.8 & -- & 70.9 \\ 
  100\_30\_10\_SD\_SF & 568114.7 & 5 & 113.8 & 234.2 & -- & 604611.5 & 616566.5 & 1.3 & 6.4 & 568114.7 & 5 & {\bf 96.6} & 173.2 & -- & 61.9 \\ 
  100\_30\_15\_DD\_DF & 591972.7 & 5 & 128.0 & 1.4 & -- & 632603.3 & 639248.7 & 1.4 & 6.9 & 591972.7 & 5 & {\bf 62.5} & 1.4 & -- & 80.0 \\ 
  100\_30\_15\_DD\_SF & 628848.7 & 5 & {\bf 72.5} & 23.0 & -- & 670902.2 & 681860.7 & 1.4 & 6.7 & 628848.7 & 5 & 198.4 & 6.2 & -- & 72.1 \\ 
  100\_30\_15\_SD\_DF & 577253.8 & 5 & {\bf 74.8} & 0.8 & -- & 619048.3 & 625905.2 & 1.4 & 7.2 & 577253.8 & 5 & 80.1 & 1.0 & -- & 71.8 \\ 
  100\_30\_15\_SD\_SF & 614820.7 & 5 & 182.3 & 280.2 & -- & 657066.2 & 670451.3 & 1.4 & 6.9 & 614820.7 & 5 & {\bf 113.9} & 316.4 & -- & 62.5 \\ 
  100\_30\_20\_DD\_DF & 645732.6 & 5 & 53.2 & 1.0 & -- & 690564.3 & 699704.9 & 1.4 & 6.9 & 645732.6 & 5 & {\bf 22.4} & 0.6 & -- & 76.9 \\ 
  100\_30\_20\_DD\_SF & 682747.8 & 5 & 77.6 & 1.8 & -- & 733250.5 & 747422.6 & 1.4 & 7.4 & 682747.8 & 5 & {\bf 31.1} & 1.8 & -- & 71.0 \\ 
  100\_30\_20\_SD\_DF & 650369.9 & 5 & 60.5 & 1.4 & -- & 698496.7 & 706468.8 & 1.4 & 7.4 & 650369.9 & 5 & {\bf 44.3} & 0.6 & -- & 73.2 \\ 
  100\_30\_20\_SD\_SF & 682534.4 & 5 & 122.5 & 38.6 & -- & 737533.2 & 751841.5 & 1.4 & 8.1 & 682534.4 & 5 & {\bf 69.4} & 37.8 & -- & 60.8 \\ \hline
  
  200\_30\_5\_DD\_DF & 705795.2 & 5 & 99.2 & 1.2 & -- & 752181.2 & 759103.6 & 2.3 & 6.6 & 705795.2 & 5 & {\bf 73.6} & 0.8 & -- & 78.3 \\ 
  200\_30\_5\_DD\_SF & 719550.3 & 5 & 305.7 & 15.0 & -- & 770774.0 & 780736.1 & 2.3 & 7.1 & 719550.3 & 5 & {\bf 245.8} & 16.2 & -- & 71.3 \\ 
  200\_30\_5\_SD\_DF & 704916.6 & 5 & 79.1 & 2.8 & -- & 758430.7 & 764760.0 & 2.2 & 7.6 & 704916.6 & 5 & {\bf 60.9} & 2.0 & -- & 73.3 \\ 
  200\_30\_5\_SD\_SF & 735354.1 & 5 & 178.4 & 4.0 & -- & 764659.5 & 803936.7 & 2.3 & 4.0 & 735354.1 & 5 & {\bf 121.4} & 3.2 & -- & 60.9 \\ 
  200\_30\_10\_DD\_DF & 792983.7 & 5 & 2173.7 & 2.2 & -- & 856142.9 & 865450.7 & 2.4 & 8.0 & 792983.7 & 5 & {\bf 1345.3} & 2.2 & -- & 77.0 \\ 
  200\_30\_10\_DD\_SF & 842862.9 & 4 & 3042.4 & 4.8 & 0.36 & 906003.1 & 918060.8 & 2.3 & 7.5 & 842501.6 &{\bf 5} & 2081.1 & 14.0 & -- & 71.8 \\ 
  200\_30\_10\_SD\_DF & 1153974.7 & 4 & 2131.0 & 1.2 & 100.00 & 862047.0 & 872889.4 & 2.3 & 8.3 & 795863.1 &{\bf 5} & 1277.8 & 1.6 & -- & 74.2 \\ 
  200\_30\_10\_SD\_SF & 846511.7 & 5 & 2593.1 & 4.6 & -- & 890846.4 & 919510.2 & 2.3 & 5.2 & 846511.7 & 5 & {\bf 2109.5} & 7.4 & -- & 63.0 \\ 
  200\_30\_15\_DD\_DF & 855959.0 & 5 & 1044.1 & 6.8 & -- & 921752.6 & 930682.1 & 2.3 & 7.7 & 855959.0 & 5 & {\bf 111.3} & 6.4 & -- & 77.3 \\ 
  200\_30\_15\_DD\_SF & 888864.8 & 5 & 2352.6 & 37.4 & -- & 957131.4 & 970481.7 & 2.3 & 7.7 & 888864.8 & 5 & {\bf 810.0} & 25.4 & -- & 69.6 \\ 
  200\_30\_15\_SD\_DF & 865785.8 & 5 & 998.4 & 5.2 & -- & 935148.0 & 944406.6 & 2.3 & 8.0 & 865785.8 & 5 & {\bf 85.3} & 4.4 & -- & 73.6 \\ 
  200\_30\_15\_SD\_SF & 914463.8 & 5 & 2568.4 & 60.0 & -- & 967167.5 & 990341.7 & 2.4 & 5.8 & 914463.8 & 5 & {\bf 1176.3} & 57.8 & -- & 60.7 \\ 
  200\_30\_20\_DD\_DF & 930124.9 & 5 & 1249.0 & 17.0 & -- & 1003038.5 & 1012228.9 & 2.3 & 7.8 & 930124.9 & 5 & {\bf 174.2} & 29.0 & -- & 77.7 \\ 
  200\_30\_20\_DD\_SF & 985825.3 & 0 & -- & -- & 0.36 & 1061655.0 & 1078288.5 & 2.4 & 7.8 & 984800.1 &{\bf 5} & 2156.7 & 71.0 & -- & 71.6 \\ 
  200\_30\_20\_SD\_DF & 923774.7 & 5 & 1284.4 & 2.2 & -- & 1001824.2 & 1011784.4 & 2.3 & 8.4 & 923774.7 & 5 & {\bf 139.8} & 2.2 & -- & 73.7 \\ 
  200\_30\_20\_SD\_SF & 1002248.3 & 2 & 3005.5 & 52.5 & 0.64 & 1081590.9 & 1104916.9 & 2.4 & 8.0 & 1001898.5 &{\bf 5} & 2448.2 & 105.4 & -- & 63.1 \\ \hline
  Average &  &  & 610.9 & 20.4 & {25.34} &  &  & 1.5 & 6.7 &  &  & {\bf 325.3} & 20.6 & -- & {71.0} \\ 
  Total &  & 230 &  &  &  &  &  &  &  &  & {\bf 240} &  &  &  &  \\ 
   \hline
\end{tabular}
\end{table}

\end{landscape}

{

\subsection{Graphical summary}

A graphical summary of the obtained results is presented in Figures \ref{fig:optimal_std}-\ref{fig:cputime_mc}.
Figures~\ref{fig:optimal_std} and \ref{fig:cputime_std} depict for STD, 3LF, STD+, and 3LF+, respectively, the fraction (in percent) of the instances solved to optimality and the average running times (in seconds).
Figures~\ref{fig:optimal_mc} and \ref{fig:cputime_mc} exhibit for MC and DPH-pMC, correspondingly, the fraction (in percent) of the instances solved to optimality and the average running times (in seconds). For each instance, the considered time to calculate the averages is either the time to prove optimality (if less than the imposed time limit of 3600 seconds) or the time limit.

\begin{figure}[H]
\begin{center}
    \begin{tabular}{cc}
    \includegraphics[width=0.4\textwidth]{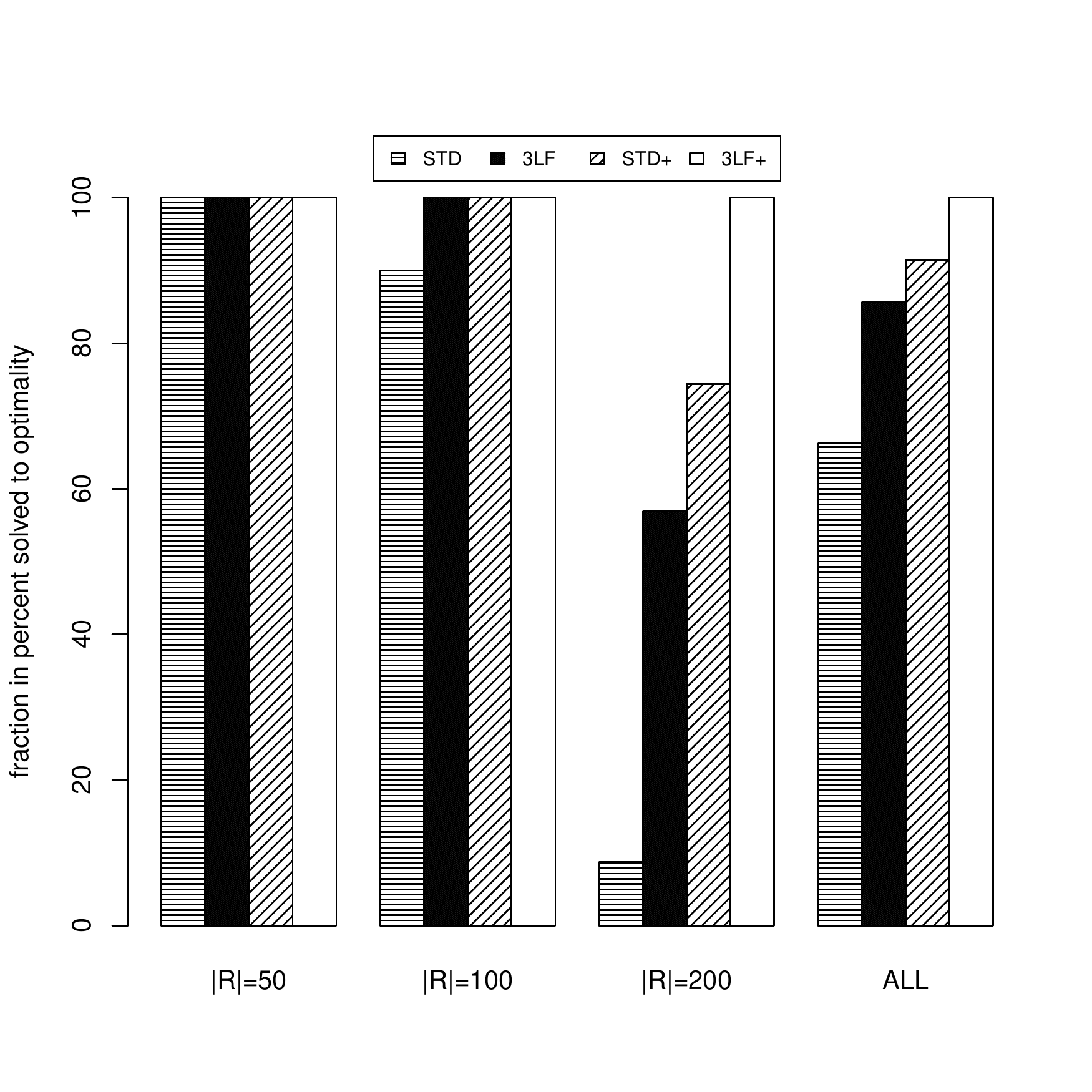} & \includegraphics[width=0.4\textwidth]{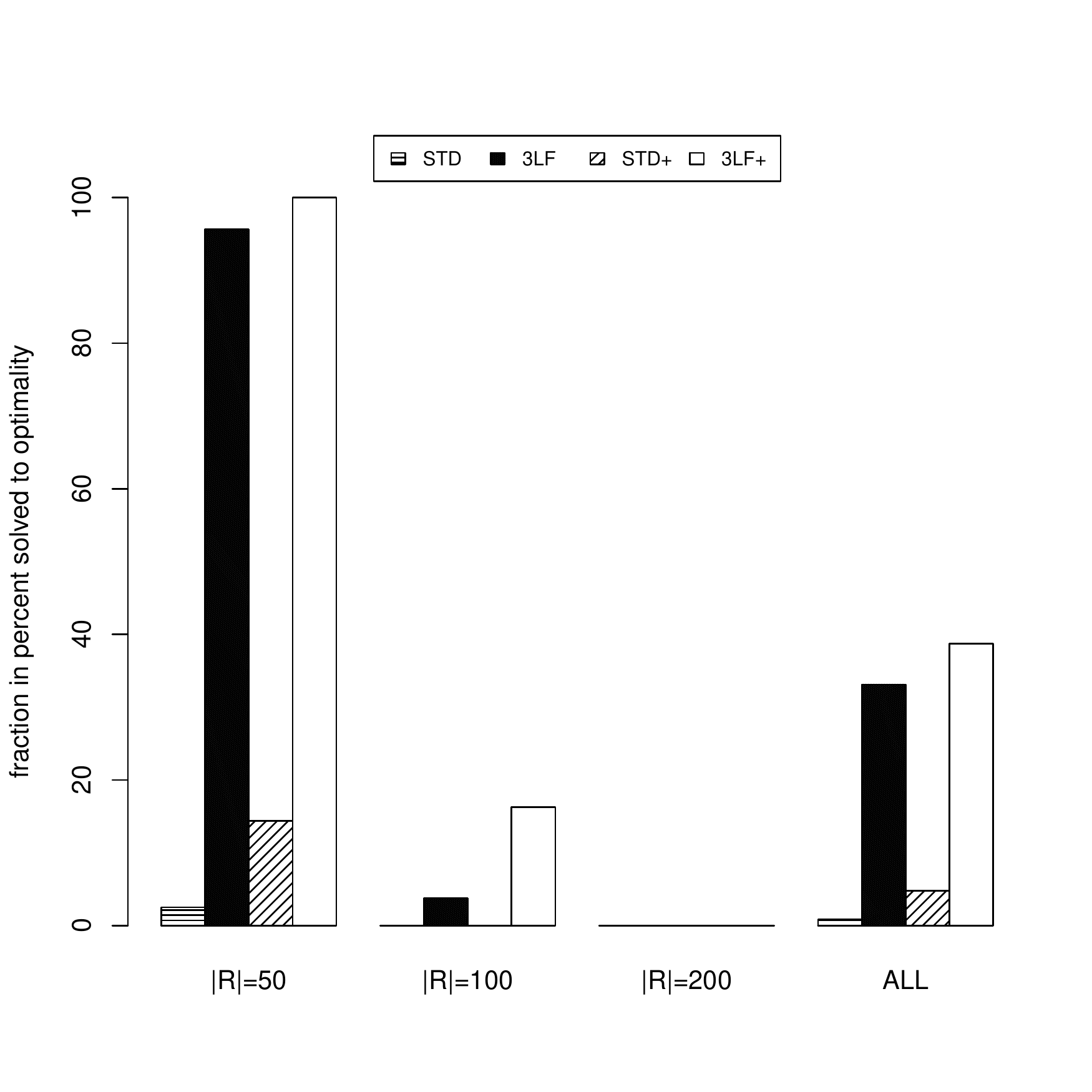}\\
(a) Instances with $|T|=15$. & (b) Instances with $|T|=30$.
\end{tabular}
\end{center}
\caption{The fraction of the instances (in percent) solved to optimality using STD, 3LF, STD+, and 3LF+ considering all the instances with each number of retailers.}
\label{fig:optimal_std}
\end{figure}

\begin{figure}[H]
\begin{center}
    \begin{tabular}{cc}
    \includegraphics[width=0.4\textwidth]{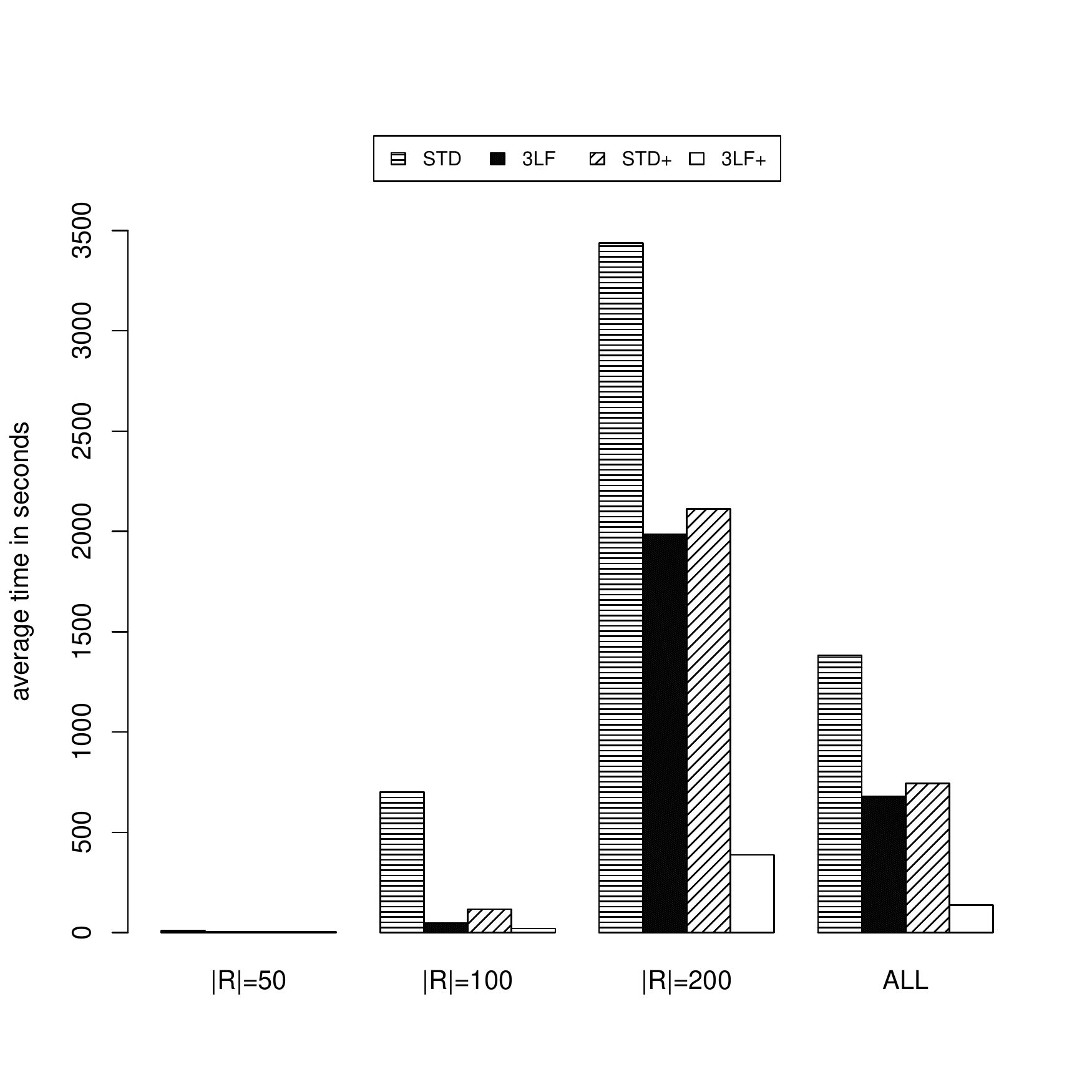} & \includegraphics[width=0.4\textwidth]{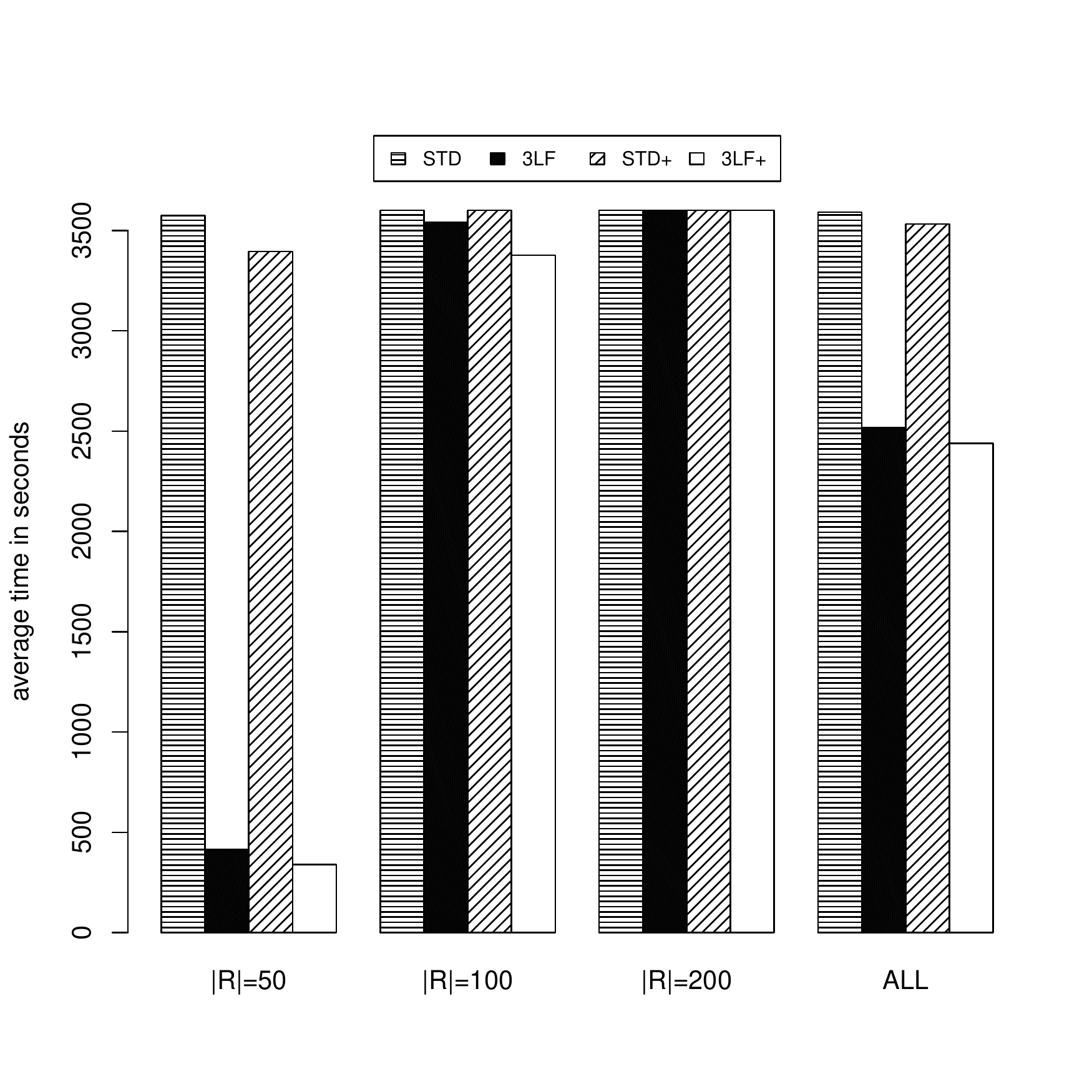}\\
(a) Instances with $|T|=15$. & (b) Instances with $|T|=30$.
\end{tabular}
\end{center}
\caption{Average computational times in seconds for STD, 3LF, STD+, and 3LF+ over all instances with each number of retailers. }
\label{fig:cputime_std}
\end{figure}

Figures~\ref{fig:optimal_std}-\ref{fig:cputime_std} show that 3LF+ outperforms STD, STD+, and 3LF+ for all the numbers of retailers and periods, but for the combination $|T|=30$ and $|R|=200$ (in this case, none of these approaches could solve a single instance to optimality). The figures also evidence the improvements achieved using the valid inequalities for the instances with $|T|=30$.

\begin{figure}[H]
\begin{center}
    \begin{tabular}{cc}
    \includegraphics[width=0.4\textwidth]{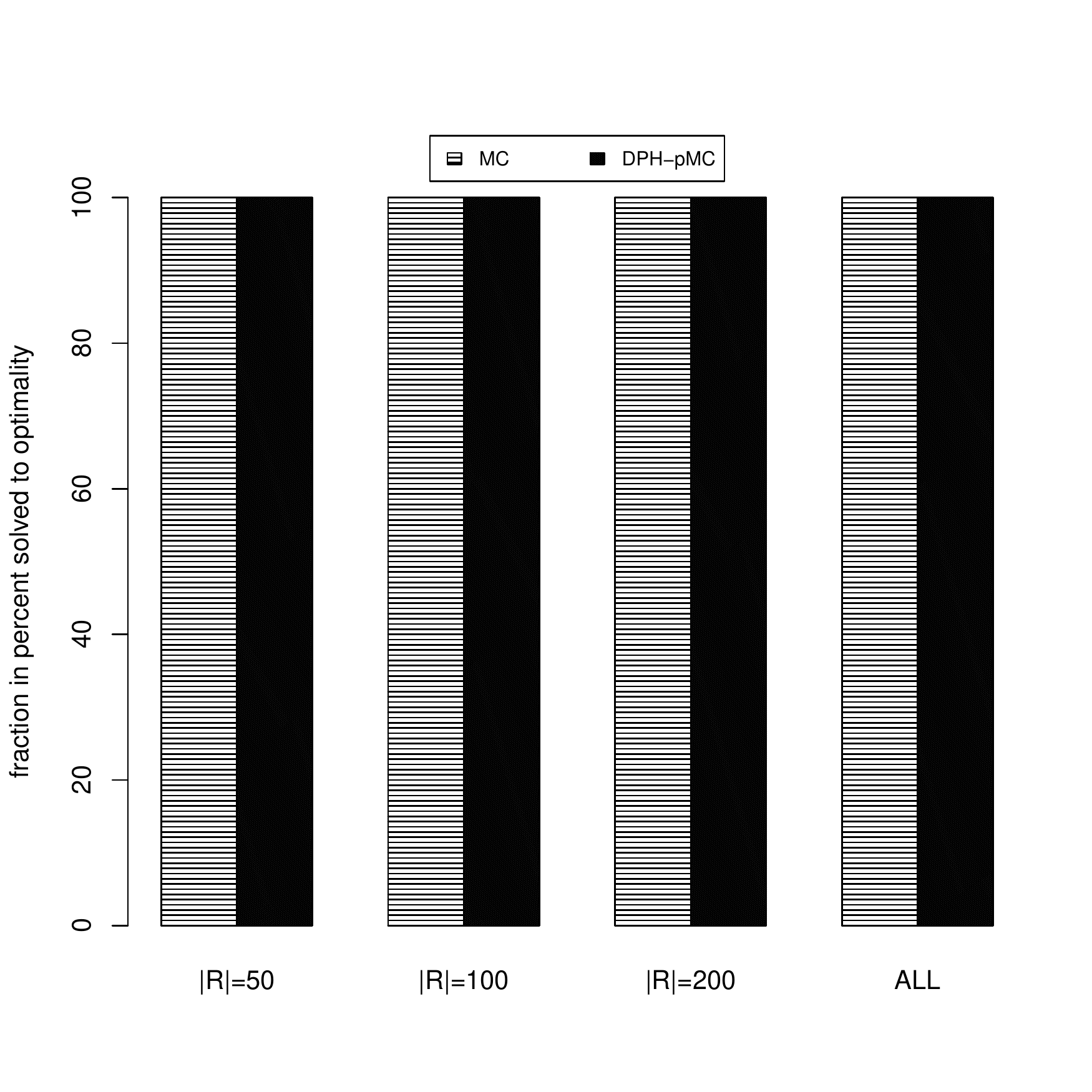} & \includegraphics[width=0.4\textwidth]{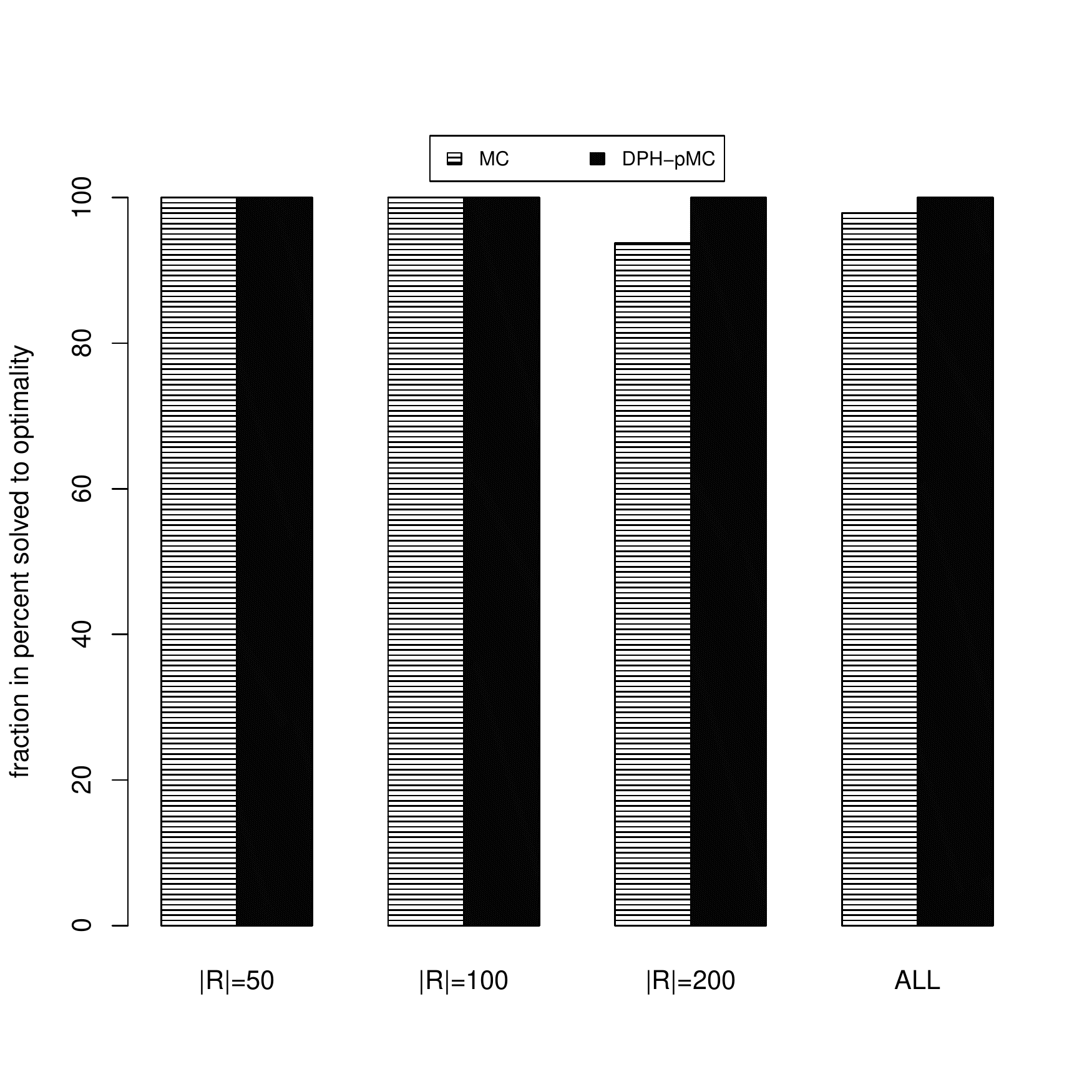}\\
(a) Instances with $|T|=15$. & (b) Instances with $|T|=30$.
\end{tabular}
\end{center}
\caption{The fraction of the instances (in percent) solved to optimality using MC and DPH-pMC considering all the instances with each number of retailers.}
\label{fig:optimal_mc}
\end{figure}

\begin{figure}[H]
\begin{center}
    \begin{tabular}{cc}
    \includegraphics[width=0.4\textwidth]{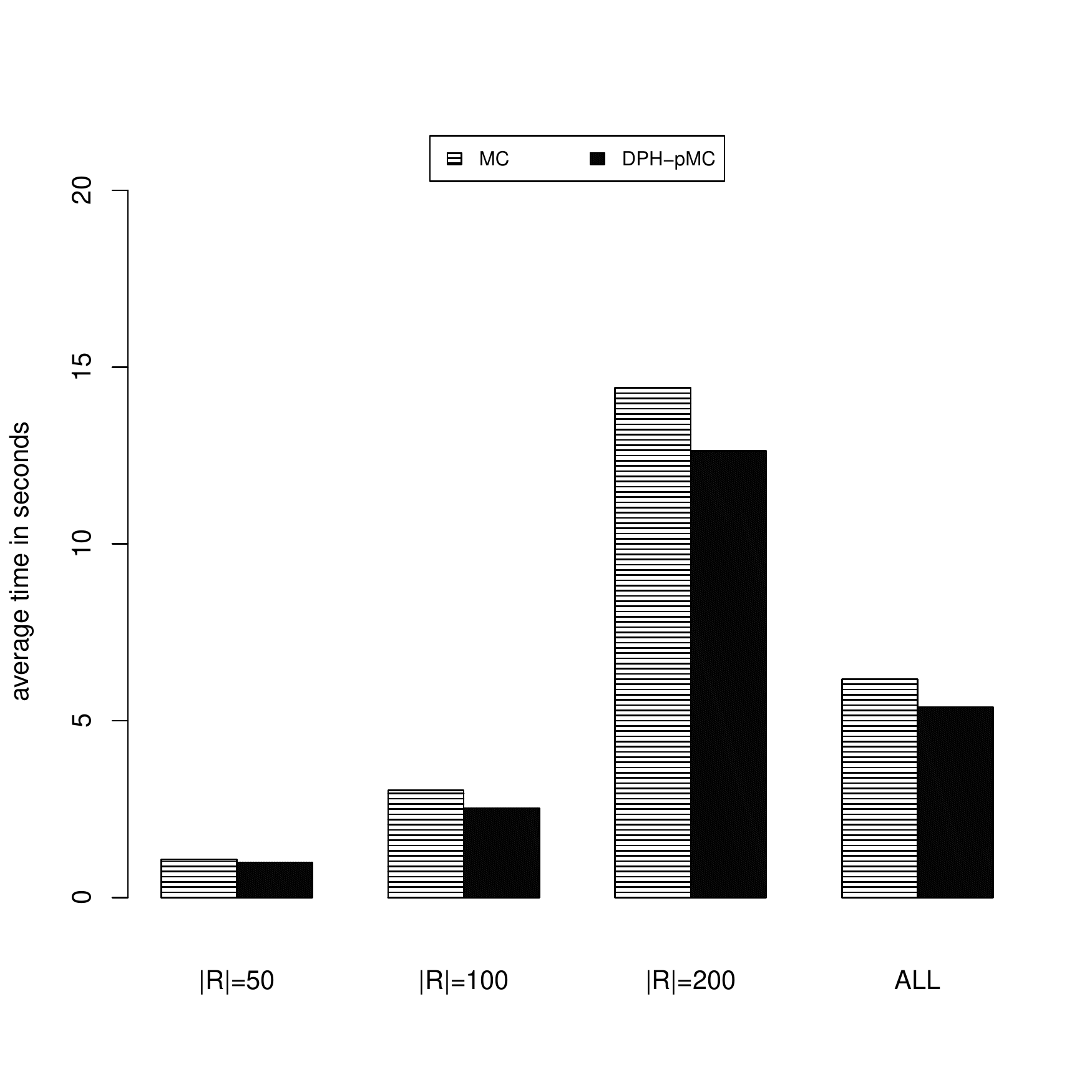} & \includegraphics[width=0.4\textwidth]{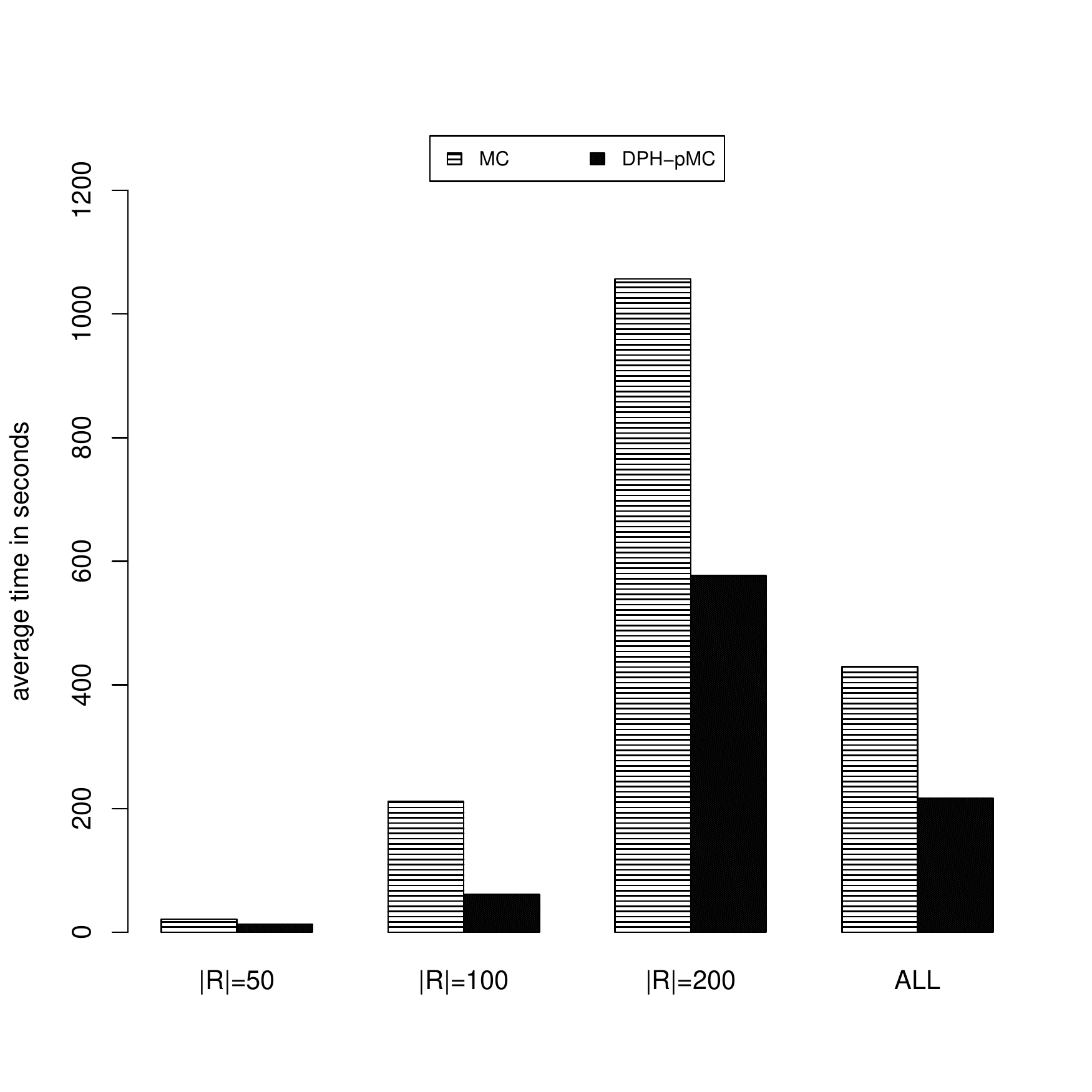}\\
(a) Instances with $|T|=15$. & (b) Instances with $|T|=30$.
\end{tabular}
\end{center}
\caption{Average computational times in seconds for MC and DPH-pMC over all instances with each number of retailers.}
\label{fig:cputime_mc}
\end{figure}

Figures~\ref{fig:optimal_mc}-\ref{fig:cputime_mc} evidence the improvements of DPH-pMC over MC. Firstly, note that nearly all instances were solved to optimality using both approaches, with an advantage for DPH-pMC considering the larger instances with $|T|=30$ and $|R|=200$. Figure~\ref{fig:cputime_mc} shows that the gains achieved by DPH-pMC in the average running times for the instances with $|T|=30$ is very remarkable.

}

\section{Concluding remarks}
\label{sec:concludingremarks}

In this work, we considered the NP-hard uncapacitated three-level lot-sizing and replenishment problem with a distribution structure (3LSPD-U). We proposed new valid inequalities, a preprocessing approach to reduce the size of an existing multi-commodity formulation for the problem, and a multi-start randomized bottom-up {dynamic programming-based heuristic}.

Computational experiments have shown that the valid inequalities can allow improvements in the bounds and, consequently, increase the solver's ability to solve instances to optimality when compared to the plain mixed integer programming formulations. 
Additionally, the use of valid inequalities using an extended formulation, which is larger but asymptotically equivalent to the standard one, can allow improvements in performance over the use of valid inequalities using the standard formulation.
The proposed multi-start randomized dynamic programming{-based} heuristic was able to encounter solutions with low optimality gaps (6.6\% on average) within very short computational times (1.2 seconds on average).
The presented preprocessing approach has shown to be effective in reducing the size of the multi-commodity formulation.  Combined, these two methods were able to increase the number of instances solved to optimality within the given time limit and to achieve significant reductions in the times spent to solve instances to optimality when compared to solely using the multi-commodity formulation.

\vspace{0.8cm}

{
\noindent \small 
\textbf{Acknowledgments:} Work of Rafael A. Melo was supported by the State of Bahia Research Foundation (FAPESB) and the Brazilian National Council for Scientific and Technological Development (CNPq). The authors are thankful to Matthieu Gruson and Raf Jans for kindly providing the benchmark instances.
The authors would like to thank three anonymous reviewers for the comments which helped to improve this paper.
}

\bibliographystyle{apacite}

\bibliography{main}
 
\end{document}